\DeclareMathOperator{\prox}{prox}
\DeclareMathOperator{\dist}{dist}
\DeclareMathOperator{\Gap}{Gap}
\DeclareMathOperator{\dom}{dom}
\DeclareMathOperator{\diag}{diag}
\DeclareMathOperator{\nnz}{nnz}
\newtheorem{assumption}{Assumption}
\newtheorem{example}{Example}
\newtheorem{theorem}{Theorem}
\newtheorem{lemma}[theorem]{Lemma}
\newtheorem{corollary}[theorem]{Corollary}
\theoremstyle{definition}
\newtheorem{remark}[theorem]{Remark}
\numberwithin{equation}{section}
\numberwithin{theorem}{section}
\newcommand{\R}{\mathbb{R}}
\newcommand{\Tau}{\mathrm{T}}
\newcommand{\Beta}{\mathrm{B}}
\newcommand{\E}{\mathbb{E}}
\newcommand{\xout}{x_{\text{out}}}
\newcommand{\yout}{y_{\text{out}}}
\title{On the Complexity of a Practical Primal-Dual Coordinate Method}
\author{Ahmet Alacaoglu \\ UW-Madison \\ \url{alacaoglu@wisc.edu} \and Volkan Cevher \\ EPFL \\ \url{volkan.cevher@epfl.ch} \and Stephen J. Wright \\ UW-Madison \\ \url{swright@cs.wisc.edu}}
\begin{document}

\maketitle
\begin{abstract}
We prove complexity bounds for the \emph{primal-dual algorithm with random extrapolation and coordinate descent (PURE-CD)}, which has been shown to obtain good practical performance for solving convex-concave min-max problems with bilinear coupling. Our complexity bounds either match or improve the best-known results in the literature for both dense and sparse (strongly)-convex-(strongly)-concave problems.
\end{abstract}
\section{Introduction}
We consider the convex-concave min-max problem with bilinear coupling based on the function $L\colon\R^d \times \R^n \to \R\cup\{-\infty, +\infty\}$ defined as follows:
\begin{equation}\label{eq:prob}
     \min_{x\in\mathbb{R}^d}\max_{y\in\mathbb{R}^n} \left\{L(x, y):= \sum_{i=1}^n \left[ \langle A_ix, y^{(i)} \rangle - h_i^\ast(y^{(i)}) \right] + g(x) = \langle Ax,y\rangle - h^*(y) + g(x) \right\},
\end{equation}
where $h_i^*\colon\R \to \R\cup\{+\infty\}$, $i=1,2,\dotsc,n$ and $g\colon\R^d \to \R\cup\{+\infty\}$ are convex and extended-valued, $y=(y^{(1)},y^{(2)}, \dotsc, y^{(n)})^\top$, $h^\ast(y) = \sum_{i=1}^n h_i^\ast(y^{(i)})$, $A_i$ is a row vector of length $d$, and $A$ is the $n \times d$ matrix whose rows are $A_i$, $i=1,2,\dotsc,n$.
We assume the existence of a primal-dual solution $(x_\star, y_\star)$ satisfying the saddle point property
\begin{equation} \label{eq:saddle}
L(x,y_\star) \ge L(x_\star,y_\star) \ge L(x_\star,y), \quad \mbox{for all $x \in \R^d$, $y \in \R^n$.}
\end{equation}
Under this assumption we can exchange $\min$ and $\max$ in~\eqref{eq:prob} and write the problem as
\begin{equation} \label{eq:prob2}
    \min_{x\in\R^d} \, F(x), \quad \mbox{where } \;\; F(x):= \max_{y \in \R^n} \, \left\{L(x,y) = \sum_{i=1}^n h_i(A_ix)+g(x) = h(Ax)+g(x)\right\},
\end{equation}
where $h_i$ is the convex conjugate of $h_i^\ast$, and
$h\colon\R^n \to \R\cup\{+\infty\}$ has the separable form $h(z) = \sum_{i=1}^n h_i(z^{(i)})$.
Problems of this form arise in machine learning, especially in
empirical risk minimization (ERM, see~\eqref{eq: case1_prob}), matrix games, and also in linearly constrained optimization (see~\eqref{eq: case2_prob}) as well as imaging.  Therefore, many methods have been developed for solving this problem under various assumptions on $h$, $g$, and $A$, such as (strong)-convexity of $g$, $h$, or $h^\ast$; or sparsity of $A$~\cite{chambolle2011first,chambolle2018stochastic,allen2017katyusha,zhang2015stochastic,alacaoglu2020random,song2021variance,tan2020accelerated,song2021coordinate,xiao2014proximal,shalev2013stochastic,shalev2014accelerated}.

The various methods that have been proposed for \eqref{eq:prob} have favorable complexity guarantees in certain special cases.  
The plethora of methods and results makes it difficult for both theoreticians and practitioners to choose the method best suited to particular instances of \eqref{eq:prob}.  
In this paper, 
we focus on improving the theory for an existing method, the PURE-CD algorithm described in \cite{alacaoglu2020random}.
We show that this method achieves or improves best-known complexity results for interesting special cases of \eqref{eq:prob}. 
The state-of-the-art results are currently dispersed around different methods.

To facilitate our discussion on complexity, we define the function $G: \R^d \times \R^n \times \R^d \times \R^n \to \R\cup\{-\infty, +\infty\}$ as follows:
\begin{equation}\label{eq: g_from_l}
G(x', y', x,y) := L(x', y) - L(x, y') = g(x') + \langle Ax', y \rangle - h^\ast(y) - g(x) -\langle Ax, y'\rangle + h^\ast(y').
\end{equation}
For a given compact set $\mathcal{Z}\subset \R^d \times \R^n$, we define the {\em gap function} as follows:
\[
\Gap (x',y'):= \max_{(x,y) \in \mathcal{Z}} \, G(x',y',x,y).
\]
The set $\mathcal{Z}$ is introduced to handle cases in which the domains for $x$ and $y$ are unbounded~\cite{chambolle2011first,chambolle2018stochastic,nesterov2007dual,nesterov2009primal}. 
(See~\cite[Lemma 4]{nesterov2009primal} for further details about $\mathcal{Z}$.)
In general, our goal in this paper is to find the total complexity of an algorithm to output a pair $(\xout, \yout)$ such that
\begin{equation}\label{eq: opt_cond_def}
\mathbb{E}\Gap(\xout, \yout) = \mathbb{E}\max_{(x, y)\in\mathcal{Z}}G(\xout, \yout, x, y) = \mathbb{E}\max_{(x, y)\in\mathcal{Z}} \, \left[ L(\xout, y) - L(x, \yout) \right] \leq \varepsilon.
\end{equation}
Because of the presence of $\mathcal{Z}$, we refer to \eqref{eq: opt_cond_def} as the  expected \emph{restricted} primal-dual gap function, or expected duality gap, for short.
For special cases of~\eqref{eq:prob} such as linearly constrained optimization or ERM, we also consider such optimality measures as objective suboptimality and feasibility.

\subsection{Context and Contributions} \label{sec:cc}

There are several reasons for our focus on PURE-CD. 
Its  algorithmic structure is simple and it adapts efficiently to the case in which $A$ is sparse; unlike the methods in~\cite{zhang2015stochastic,tan2020accelerated,song2021coordinate} it does not require any special implementation techniques to exploit sparsity (such as lazy updates). 
Its appealing empirical performance is documented in~\cite{alacaoglu2020random,song2021coordinate}.
It has favorable theoretical properties such as almost-sure convergence in the general convex-concave setting and adaptive linear convergence under metric subregularity; these were shown in the paper~\cite{alacaoglu2020random}, which introduced the method.
Such properties do not hold for many competing algorithms~\cite{song2021coordinate,song2021variance,tan2020accelerated,allen2017katyusha,zhang2015stochastic,shalev2013stochastic}.
Unfortunately, the complexity bounds proven in~\cite{alacaoglu2020random} for PURE-CD do not improve over deterministic algorithms.

Before we proceed, a little terminology is needed. We say that a convex function $g$ is ``$\mu$-strongly convex" or ``$\mu$-s.c." for some $\mu \ge 0$ when $g(x) \geq g(y) + \langle q_y, x-y \rangle + \frac{\mu}{2} \| x-y\|^2$ for all $x$ and all $q_y \in \partial g(y)$.
We say that \eqref{eq:prob} is ``convex-concave" in the basic case in which $g$ and $h^\ast$ are convex.
We describe \eqref{eq:prob} as ``convex-strongly concave" when, additionally, $h^\ast$ is $\mu$-strongly convex with $\mu>0$. The terms "strongly convex-concave" and ``strongly convex-strongly concave" are defined accordingly. 
The latter three terms are collective referred to as the ``strongly convex cases."
We also sometimes use terminology ``one-sided strong convexity" for strongly convex-concave and convex-strongly concave cases.

The contributions of this paper to the theory of PURE-CD and the problem \eqref{eq:prob} can be summarized as follows (see Sec~\ref{sec: sect_overview} for the details):
\begin{itemize}
  \setlength\itemsep{1mm}
\item For linearly constrained problems with dense $A$, we improve the best known complexity on objective value and feasibility from $\min\left\{O\left(nd\|A\|\varepsilon^{-1} \right), O\left(nd+ \sqrt{nd(n+d)}\|A\|_F\varepsilon^{-1} \right)\right\}$ (for PDHG~\cite{chambolle2011first} and variance reduced extragradient~\cite{alacaoglu2021stochastic}) to $O\left( nd+ d\sum_{i=1}^n \|A_i\|\varepsilon^{-1} \right)$.
\item For general convex-concave problems with dense $A$, we improve the best-known complexity on expected duality gap from $\min\left\{O\left(nd\|A\|\varepsilon^{-1} \right), O\left(nd+ \sqrt{nd(n+d)}\|A\|_F\varepsilon^{-1} \right)\right\}$ (for PDHG~\cite{chambolle2011first} and variance reduced extragradient~\cite{alacaoglu2021stochastic}) to $\tilde{O}\left( nd+ nd \max_i \|A_i\|\varepsilon^{-1} \right)$. 
This improvement is strict when rows of $A$ are normalized. 
We make use of several techniques from~\cite{song2021variance} that proved the same complexity for the weaker measure $\max_{(x,y)} \mathbb{E}G(\xout, \yout, x,y)$ (see \Cref{sec: prelim} for the relations between these measures).
\item For empirical risk minimization problems with $L$-Lipschitz convex losses, non-strongly convex regularizers, and normalized feature vectors with with dense $A$, we improve the best-known complexity from $O\left( nd\log\varepsilon^{-1} + d\sqrt{n}L \varepsilon^{-1} \right)$ (see the accelerated variance reduction and primal-dual coordinate techniques of~\cite{allen2017katyusha,zhang2015stochastic,shalev2013stochastic,tan2020accelerated}) to $O\left( nd + d\sqrt{n}L \varepsilon^{-1} \right)$.
 To our knowledge, this is the first time that the logarithmic factor is removed for this problem, despite it being a well-studied case.
(See \Cref{app: upper_lower_bd} for details.)
\item For strongly convex-strongly concave problems with sparse $A$, we match the best known complexity of
$O\left( \left(\nnz(A)+ \nnz(A)\max_i\|A_i\|\sqrt{\mu_1\mu_2}\right)\log\varepsilon^{-1} \right)$ (see~\cite{tan2020accelerated}).
\item For strongly convex-concave or convex-strongly concave problems with sparse $A$, we improve the best-known complexity $\min\left\{ O\left( \nnz(A)\|A\|\mu^{-1}\varepsilon^{-1/2} \right), O\left( nd+nd\max_i\|A_i\|\mu^{-1}\varepsilon^{-1/2} \right) \right\}$ (for PDHG~\cite{chambolle2011first}, SPDHG~\cite{chambolle2018stochastic}, and VRPDA~\cite{song2021coordinate}) to $O\left( \nnz(A)+\nnz(A)\max_i\|A_i\|\mu^{-1}\varepsilon^{-1/2} \right)$, in the common regime in which $\max_i \|A_i\| \geq \mu$. This complexity has also been shown for the strongly convex-concave case in the recent work~\cite{song2021coordinate} .
\end{itemize}

In addition to the different methods achieving the best known rates in these cases (prior to this work), we note that some of the methods that are best in one case may have no guarantees for the other cases.  
Consider the following.
\begin{itemize}
\item The technique used in the analysis of VRPDA~\cite{song2021variance} led to improved
  complexity in the convex-concave case with dense $A$ for the quantity  $\max_{x, y} \mathbb{E} G(\xout, \yout, x, y)$ (see \Cref{sec: prelim}), and this method also has good complexity for the strongly convex-concave case with dense $A$.  
  On the other hand, this method does not have guarantees for the expected primal-dual gap $ \mathbb{E}\max_{x, y} G(\xout, \yout, x, y)$ or for the convex-strongly concave
  or strongly convex-strongly concave cases. Moreover, the output sequence for VRPDA does not have almost sure convergence guarantees in the convex-concave case, and it does not adapt to sparsity in $A$. 
\item SPDHG~\cite{chambolle2018stochastic} has good complexity with dense $A$ and in the strongly convex
  cases,  but it does not adapt to sparsity of $A$.
\item Although the method of \cite{tan2020accelerated} (and also~\cite{zhang2015stochastic} with more restrictions) can adapt to sparsity of $A$ for strongly convex-strongly concave problems, it does not have a guarantee for the general convex-concave case or linearly constrained problems, see Table~\ref{tb: 1}.
  \item The recent work~\cite{song2021coordinate} built on~\cite{song2021variance} to propose a new method that exploits sparsity and has improved guarantees in the convex-concave and strongly convex-concave cases.
  However, the guarantees for convex-concave case are for the quantity $\max_{x, y} \mathbb{E} G(\xout, \yout, x, y)$ (see \Cref{sec: prelim}) rather than for the expected duality gap. Additionally, the objective and feasibility guarantees are on the expectation of the output $\mathbb{E}[\xout]$ rather than the output iterates $\xout$ themselves. 
  This algorithm is not analyzed for the convex-strongly concave or strongly convex-strongly concave cases.
\item Variance reduced variational inequality (VRVI) methods
  \cite{carmon2019variance,alacaoglu2021stochastic} have competitive complexity bounds when $n
  \approx d$, but they are not appealing otherwise, since the per-iteration cost depends on $n+d$,  degrading the complexity, as we explain in~\Cref{sec: sect_overview}.
\end{itemize}

\subsection{Preliminaries}\label{sec: prelim}

We introduce a blanket assumption to ensure that $L$ defined in \eqref{eq:prob} is convex-concave and that strong duality holds.  
Additional assumptions are introduced as needed in the analysis.
\begin{assumption}\label{asmp: asmp1}
Let $g\colon \R^d \to (-\infty, +\infty]$, $h^\ast\colon\mathbb{R}^n
  \to (-\infty, +\infty] $ be proper convex lower semicontinuous and
    that a saddle point $(x_\star,y_\star)$ for \eqref{eq:prob} exists
    (but is not necessarily unique).
\end{assumption}
This assumption is standard in the literature for analyzing both stochastic and deterministic algorithms~\cite{chambolle2011first,alacaoglu2019convergence,chambolle2018stochastic,fercoq2019coordinate,song2021variance,carmon2019variance}. 
For the relationship between the existence of a saddle point, primal and dual solutions, and strong duality, we refer to~\cite[Cor. 19.19, 19.20]{bauschke2011convex},~\cite[Lem. 36.2]{rockafellar1970convex}.

Some works show convergence of the ``max of expectation" gap-like measure $\max_{(x,y)\in\mathcal{Z}} \mathbb{E} G(x', y', x,y)$.
However, results for the ``expectation of max"  or the ``expected gap" $\mathbb{E}\max_{(x,y)\in\mathcal{Z}} G(x', y', x,y) = \mathbb{E} \Gap (\xout,\yout)$ are more interesting, as the following example shows.
\begin{example}\label{ex: np3}
Consider the saddle-point problem defined by $L(x,y) =xy$, with $x, y \in \R$, which has the unique solution $(x_\star,y_\star)=(0,0)$. 
We have 
\begin{equation*}
\max_{(x, y) \in \mathcal{Z}}\mathbb{E} G(x_k, y_k, x, y) = \max_{(x, y)\in\mathcal{Z}}\mathbb{E} [x_k y - x y_k],
\end{equation*}
for any set $\mathcal{Z}$. Consider an algorithm that generates iterates $(x_k,y_k)$ with $x_k=y_k = k$
with probability (w.p.) $1/2$ and $x_k=y_k = -k$ w.p. $1/2$, for $k=1,2,\dotsc$.  
Then we have for any fixed $x$ and  $y$ that $\mathbb{E} [x_k y - x y_k]=\tfrac12 [k(y-x)+k(x-y)]=0$.
This seems to indicate acceptable behavior of the algorithm, although the iterates are diverging.  
On the other hand, the optimality measure $\mathbb{E} \max_{(x, y)\in\mathcal{Z}} G(x_k, y_k, x, y)$ is nonzero, as desired.
\end{example}

We show in sequel (\Cref{sec: lincons_erm}) that guarantees on expected gap can be transformed into guarantees in objective suboptimality and/or feasibility for specific problems.

\begin{remark}
In all the comparison tables, a dash ($-$) indicates that the result for the particular setting is not proven in the previous work.
\end{remark}

The complexity results that we discuss in the remainder of the paper depend on different norms of the matrix $A \in \R^{n \times d}$. 
The relationship between these norms is critical to determining when and in what circumstances the complexity of one method is superior to another.  
We summarize the equivalences between these norms here.
(Note that $\|A \|$ denotes the spectral norm $\|A\|_2$.)
\begin{subequations} \label{eq:Anorms}
  \begin{align}
    \|A \| &\le \|A\|_F \le \sqrt{r}\|A\|, \;\; \mbox{where $r$ is the rank of $A$,} \\
    \frac{1}{\sqrt{n}} \|A\|_F & \le \max_i \, \|A_i\| \le \|A\|_F, \\
    \frac{1}{\sqrt{n}} \|A\| & \le \max_i \, \|A_i\| \le \|A\|, \\
    \|A\|_F & \le \sum_{i=1}^n \|A_i\| \le \sqrt{n} \|A\|_F, \\
    \|A\| & \le \sum_{i=1}^n \|A_i\| \le n \|A\|.    
  \end{align}
\end{subequations}

\subsection{Overview of Results}\label{sec: sect_overview}
In the tables described here, the notation $O(\cdot)$ suppresses the term $\max_{x, y} \| x\|^2 + \| y\|^2$ for the measures $\mathbb{E}\max_{(x,y)} G(x', y', x,y)$ and $\max_{(x,y)} \mathbb{E}G(x', y', x,y)$. 
It suppresses $\| x_\star-x_0\|^2 + \|y_\star-y_0 \|^2$ for the other measures. 
(Since these quantities are common across the complexity bounds for all algorithms, they only complicate the comparisons.)

\paragraph{Convex-concave $L$ and dense $A$ (Table~\ref{tb: 1} and Section~\ref{sec: cvx_ccv_dense}).}

We first consider the the third column of~\Cref{tb: 1}, showing complexities on the expected gap.
Compared to variance reduced variational inequality (VRVI) methods~\cite{carmon2019variance}, coordinate methods are preferable when $n \gg d$, because in this case it is more likely that  $\sqrt{d}\max_i \|A_i\|\leq \|A\|_F$.  
When $d \gg n$, we can apply the coordinate methods in the dual, switching the roles of $n$ and $d$ and still obtain a better complexity than VRVI. (Note that since VRVI incurs a cost that depends on $n+d$ at each iteration and is invariant under switching.)  
The complexity of VRVI  methods is better when $A$  is dense and square ($n=d$), since $\|A\|_F \leq \sqrt{n}\max_i\|A_i\|$.
(VRVI might also be competitive when $A$ is {\em approximately} square.)

If the norms of the rows $A_i$ are normalized, then coordinate methods such as PURE-CD or VRPDA strictly improve variance reduced methods for the expected gap, since we then have $\|A_i\|= \frac{1}{\sqrt{n}}\|A\|_F$ for all $i\in[n]$.  
Finally, for the first two columns in~\Cref{tb: 1} (see the corresponding problems in~\eqref{eq: case1_prob},~\eqref{eq: case2_prob}), coordinate methods
have strictly better complexity thanks to importance sampling, since $\sum_{i=1}^n \|A_i\|\leq \sqrt{n}\|A\|_F$, by Cauchy-Schwarz and since $\sum_{i=1}^n \|A_i\| \leq n \max_i\|A_i\|$.
  
  We refer to \Cref{sec: prelim} for the comparison between the measures in the last two columns of \Cref{tb: 1}. We include the last column only to be able to compare with all the results in the literature, many of which apply only to this weaker ``max of expectation" measure.
  
\begin{table}[h]
\captionsetup{font=small}
\centering
\begin{tabular}{ l | l | l | l | l }
  & $\mathbb{E}F(\xout) - F(x_\star)^\ast$ & \begin{tabular}{@{}l@{}}$\mathbb{E}|g(\xout) - g(x_\star)|$ \\ $\mathbb{E}\dist(A\xout, C)^{\triangle}$\end{tabular} & $\mathbb{E} \Gap(\xout, \yout)$ & $\max\limits_{(x,y)} \mathbb{E} G(\xout, \yout, x,y)$ \\ 
 \hline
 PDHG~\cite{chambolle2011first}  & $O\left( \frac{d \sqrt{n} L_f \|A\|}{\varepsilon} \right)$ & $O\left( \frac{nd \|A\|}{\varepsilon} \right)$ & $O\left( \frac{nd \|A\|}{\varepsilon} \right)$ & $O\left( \frac{nd \|A\|}{\varepsilon} \right)$ \\[2mm]
 VRPDA~\cite{song2021variance} & $-$ & $-$ & $-$ & $\tilde{O}\left( \frac{nd \max_i \|A_i\|}{\varepsilon} \right)$ \\  [2mm]
 CLVR~\cite{song2021coordinate} & $-$ & $-^\dagger$ & $-$ & ${O}\left( \frac{nd \max_i \|A_i\|}{\varepsilon} \right)^\ddagger$ \\  [2mm]
SPDC~\cite{zhang2015stochastic} & $\tilde{O}\left( nd +\frac{d\sqrt{n}L_f \max_i \|A_i\|}{\varepsilon} \right)$ & $-$ & $-$ & $-$ \\[2mm]
Katyusha~\cite{allen2017katyusha} & $\tilde{O}\left(nd+ \frac{d\sqrt{n} L_f \max_i \|A_i\|}{\varepsilon} \right)$ & $-$ & $-$ & $-$ \\[2mm]
SDAPD~\cite{tan2020accelerated} & $\tilde{O}\left( nd+\frac{d\sqrt{n} L_f \max_i \|A_i\|}{\varepsilon} \right)$ & $-$ & $-$ & $-$ \\[2mm]
VRVI \cite{carmon2019variance,alacaoglu2021stochastic} & $-$ & $O\left( \frac{\sqrt{nd(n+d)} \|A\|_F}{\varepsilon}  \right)$ & $O\left( \frac{\sqrt{nd(n+d)} \|A\|_F}{\varepsilon} \right)$    & $O\left( \frac{\sqrt{nd(n+d)}\|A\|_F}{\varepsilon}  \right)$  \\[2mm]
 PURE-CD & $O\left( nd+ \frac{d L_f \sum_{i=1}^n \|A_i\|}{\sqrt{n}\varepsilon}  \right)$ & $O\left( \frac{d \sum_{i=1}^n \|A_i\|}{ \varepsilon}  \right)$ & $\tilde{O}\left( \frac{nd\max_i \|A_i\|}{\varepsilon}  \right)$    & ${O}\left( \frac{nd\max_i \|A_i\|}{\varepsilon}  \right)$ 
\end{tabular}
\caption{Complexity bounds for various measures of $\varepsilon$-optimality, for the convex-concave case of \eqref{eq:prob} with dense $A$ (\Cref{alg: purecd_dense}). Each entry shows the number of operations required to reduce the quantities indicated in each column label below $\varepsilon$. 
$^\ast$The first column is for solving $\min_x F(x) = g(x) + \frac{1}{n}\sum_{i=1}^n f_i(A_i x)$ and assumes that $f_i$ are $L_f$-Lipschitz 
and uses the conversion described in \Cref{app: upper_lower_bd}.  
$^\triangle$The second column is when $h(\cdot) = \delta_C(\cdot)$ for a set $C$.
$^\dagger$The result derived in~\cite[Cor.~1]{song2021coordinate} for CLVR applied to probpems with linear constraints are on the {\em expectation} of the output iterate $\mathbb{E}[\xout]$ (not normally available) rather than on $\xout$ itself, so we omit this result.
$^\ddagger$This result is given for the specific case of linear constrained problems.}
\label{tb: 1}
\end{table}

\paragraph{Convex-concave $L$ and sparse $A$ (Table~\ref{tb: 2} and Section~\ref{subsec: random_iter}).}
In the case of sparse $A$, we compare the results for PURE-CD obtained in this paper with the results for CLVR \cite{song2021coordinate} and the standard results for PDHG.
The results of~\cite{carmon2020coordinate} may also be applicable in this case. 
For matrix games with $\ell_2$ domains in both primal and dual,  \cite{carmon2020coordinate}  derives a complexity bound for expected gap of 
\begin{equation} \label{eq:carmon}
O\left( \nnz(A) + \varepsilon^{-1}\sqrt{\nnz(A)}\max\left\{\sqrt{\sum_i \|A_{i:}\|_1^2}, \sqrt{\sum_j \|A_{:j}\|_1^2}\right\} \right),
\end{equation}
which may be a remarkable improvement over the deterministic complexity in certain regimes.
We do not include this algorithm in our comparisons for two reasons. \emph{(i)}  The algorithm targets constrained problems with non-separable, bounded domains whereas we focus on the case of separable proximal functions, with potentially unbounded domains. The non-separability requires new data structures which result in a complicated algorithm whose practicality is unclear.
No implementation is done in~\cite{carmon2020coordinate}.
\emph{(ii)} Algorithmically,~\cite{carmon2020coordinate} builds on~\cite{carmon2019variance}, where the extension of the arguments to the proximal case requires either additional assumptions on the functions or additional boundedness assumptions on the domains. 
These assumptions hold on the matrix game applications of~\cite{carmon2020coordinate,carmon2019variance} but they may not hold on the problems we are most interested in here.
Still, when the relevant assumptions hold,  the complexity results in the algorithms of \cite{carmon2020coordinate}  may improve over our results and other best-known complexities (as discussed in that paper).

As shown in Table~\ref{tb: 2}, our analysis techniques do not show {\em improved} complexities for PURE-CD over PDHG, for  $\mathbb{E}\Gap(\xout, \yout)$ with sparse $A$ (\Cref{alg: purecd_sparse}). 
We show however that PURE-CD can {\em match} the complexity of the deterministic algorithm PDHG, which is the best known for \eqref{eq:prob}, and which is proved in \cite{alacaoglu2020random} when step sizes are selected appropriately.
We can also obtain for PURE-CD the same complexity bound as CLVR from~\cite{song2021coordinate}, which is on the weaker measure on $\max_{x, y} \mathbb{E} G(\xout, \yout, x, y)$.
We leave it as an open question whether improved complexity can be obtained for PURE-CD on the expected gap (see also our discussions in \Cref{subsec: random_iter} and \Cref{sec: conclusions}).

\begin{table}[h!]
\centering
\captionsetup{font=small}
\begin{tabular}{ l | l | l }
   & $\mathbb{E} \Gap(\xout, \yout)$ & $\max\limits_{(x,y)} \mathbb{E} G(\xout, \yout, x,y)$ \\ 
 \hline
 PDHG~\cite{chambolle2011first}  & $O\left( \text{nnz}(A) \|A\| \varepsilon^{-1} \right)$ & $O\left( \text{nnz}(A)  \|A\| \varepsilon^{-1} \right)$ \\[2mm]
 CLVR~\cite{song2021coordinate} & $-$ & $O\left( \text{nnz}(A) \max_i \|A_i\| \varepsilon^{-1} \right)$ \\[2mm]
 PURE-CD & $O\left( \text{nnz}(A) \|A\| \varepsilon^{-1} \right)^\dagger$ & $O\left( \text{nnz}(A) \max_i \|A_i\| \varepsilon^{-1} \right)$
\end{tabular}
\caption{Complexity bounds for various measures of $\varepsilon$-optimality, for the convex-concave case of \eqref{eq:prob} with sparse $A$ (\Cref{alg: purecd_sparse}). Each entry shows the number of operations required to reduce the quantities indicated in each column label below $\epsilon$. The first column shows that the complexity of PURE-CD matches that of PDHG for the expected-gap measure. 
In the second column, we assume that the output point $(\xout,\yout)$ is randomly selected iterate, rather than the ergodic average; see~\Cref{subsec: random_iter}.
$^\dagger$The proof of this result can be found in \cite{alacaoglu2020random} when  appropriate choices are made for the step sizes  and when $\|A\|\geq g(x_0) + h^\ast(y_0) - g(x_\star) - h^\ast(y_\star)$}.
\label{tb: 2}
\end{table}

\paragraph{Strongly convex cases and sparse $A$ (\Cref{tb: 3} and Sections~\ref{sec: str_cvx_str_ccv},~\ref{sec: str_cvx_ccv}).} 
When $h^*$ is strongly convex (second column of \Cref{tb: 3}), we  prove complexity results only for a measure involving only the dual variable, namely, $\mathbb{E}\| y_k - y_\star \|^2$.
For $A$ dense, it is easy to derive the same complexities for the expected gap using arguments from~\cite{chambolle2016ergodic}, but since sparsity introduces additional complications in the argument, we state results only for this measure.
This measure is commonly used in the primal-dual optimization literature when there is strong convexity in the dual problem~\cite{chambolle2018stochastic,chambolle2011first}.
We believe that by using regularization-based approach of~\cite{zhang2015stochastic,allen2017katyusha}, we can derive guarantees for the primal objective suboptimality, but we refrain from doing so here, to keep the algorithm and analysis \emph{direct}. 
Similar comments apply to the other strongly convex cases, shown in the first and third columns of the table.
To make comparisons between algorithms more straightforward, we focus on the (usual) case in which $\max_i\|A_i\|\geq \mu$, so that $\max \left\{ \max_i \|A_i \|/\mu,1 \right\} = \max_i \|A_i \|/\mu$.

For the strongly convex-strongly concave case, our complexity improves over VRVI methods~\cite{carmon2019variance,alacaoglu2021stochastic} when $\sqrt{\nnz(A)}\max_i\|A_i\|\leq \sqrt{n+d}\|A\|_F$.
This inequality holds when row norms are normalized, but otherwise one can find examples where one complexity is better than the other.
When $n\gg d$, this requirement would usually be satisfied since it is roughly equivalent to  $\sqrt{\frac{\nnz(A)}{n}}\max_i \|A_i\|\leq \|A\|_F$.
For sparse $A$, this condition will usually be satisfied easily.

\begin{table}[h!]
\captionsetup{font=small}
\begin{tabular}{ l | l | l | l  }
  & \begin{tabular}{@{}l@{}} $g$ is $\mu$-s.c., \\ $\mathbb{E}\| x_\star - x_k\|^2$\end{tabular} & \begin{tabular}{@{}l@{}} $h^\ast$ is $\mu$-s.c.,\\
  $\mathbb{E}\| y_\star - y_k\|^2$\end{tabular} & \begin{tabular}{@{}l@{}} $g, h^\ast$ are $\mu_1, \mu_2$-s.c.,\\ $\mathbb{E}\left[ \| x_\star - x_k \|^2 + \| y_\star - y_k\|^2 \right]$ \end{tabular}  \\ 
 \hline
 PDHG~\cite{chambolle2011first} & $O\left( \frac{\text{nnz}(A) \|A\|}{\mu \sqrt{\varepsilon}}  \right)$ & $O\left( \frac{\text{nnz}(A) \|A\|}{\mu \sqrt{\varepsilon}}  \right)$ & $O\left( \frac{\text{nnz}(A) \|A\|}{\sqrt{\mu_1\mu_2}} \log\varepsilon^{-1}  \right)$  \\[1mm]
  SPDHG~\cite{chambolle2018stochastic} & $O\left( \frac{nd \max_i \|A_i\|}{\mu \sqrt{\varepsilon}}  \right)$ &$O\left( \frac{nd \max_i \|A_i\|}{\mu \sqrt{\varepsilon}}  \right)$ &$O\left( \frac{nd\max_i \|A_i\|}{\sqrt{\mu_1\mu_2}} \log\varepsilon^{-1}  \right)$ \\[1mm]
 VRPDA~\cite{song2021variance} & $O\left( \frac{nd \max_i \|A_i\|}{\mu \sqrt{\varepsilon}}  \right)$ &$-$ &$-$ \\[1mm]
 CLVR~\cite{song2021coordinate} & $O\left( \frac{\nnz(A) \max_i \|A_i\|}{\mu \sqrt{\varepsilon}}  \right)$ &$-$ &$-$ \\[1mm]
 SPDAD~\cite{tan2020accelerated} & $-$ & $-$ & $O\left( \frac{\text{nnz}(A) \max_i \|A_i\|}{\sqrt{\mu_1\mu_2}} \log\varepsilon^{-1}  \right)$\\[1mm]
  VRVI~\cite{carmon2019variance,alacaoglu2021stochastic} & $-$ & $-$ & $O\left( \frac{\sqrt{\nnz(A)(n+d)}\|A\|_F} {\sqrt{\mu_1\mu_2}} \log\varepsilon^{-1}  \right)$\\[1mm]
  PURE-CD & $O\left( \frac{\text{nnz}(A) \max\left\{\frac{\max_i \|A_i\|}{\mu}, 1 \right\}}{\sqrt{\varepsilon}}  \right)$ & $O\left( \frac{\text{nnz}(A) \max\left\{\frac{\max_i \|A_i\|}{\mu}, 1 \right\}}{\sqrt{\varepsilon}} \right)$ & $O\left( \frac{\text{nnz}(A) \max_i \|A_i\|}{\sqrt{\mu_1\mu_2}} \log\varepsilon^{-1}  \right)$
\end{tabular}
\caption{Complexity bounds for various measures of $\epsilon$-optimality, for the strongly convex cases of \eqref{eq:prob} with sparse $A$ (\Cref{alg: purecd_sparse}). Each entry shows the number of operations required to reduce the quantities indicated in each column label below $\epsilon$. \cite{tan2020accelerated} provided a lazy update scheme for SPDAD only in the case tabulated in the last column. 
}
\label{tb: 3}
\end{table}

\subsection{Notation and Terminology}\label{sec: notation}
We use the notation $\dom{g}=\{ x\in\mathbb{R}^d\colon g(x) < +\infty \}$; $A_i$ for $i$-th row of data matrix $A \in \R^{n \times d}$; $\|A\|$ for spectral norm; $\nnz(A)$ for number of nonzero elements in matrix $A$; $[n]=\{ 1,\dots,n\}$; the projection operator $P_{C}(x) = \arg\min_{u\in C} \| u-x\|^2$ for a nonempty, convex, closed set $C$; $\dist^2(x, C) = \| x-P_C(x)\|^2$; $\delta_C(x) = 0$ if $x\in C$ and $\delta_C(x) = +\infty$ if $x\not \in C$. For $Q\succ 0$, we denote the weighted inner product and norm as $\langle x, y \rangle_Q = \langle Qx, y \rangle$ and $\| x \|_Q = \langle x, Qx \rangle^{1/2}$. 
The $i$-th coordinate of a vector $y_k$ is denoted by $y_k^{(i)}$.

When we use diagonal step sizes and parameters, we frequently use the following notations for important diagonal matrices: $\Sigma=\diag(\sigma^{(1)}, \dots, \sigma^{(n)})$, $\Tau = \diag(\tau^{(1)}, \dots, \tau^{(d)})$, $\Theta = \diag(\theta^{(1)}, \dots, \theta^{(d)})$, and $\Pi = \diag(\pi^{(1)}, \dots, \pi^{(d)})$.

For a key conditional expectation, we use the notation $\mathbb{E}_k [\cdot] = \mathbb{E}[\cdot | i_0, \dots, i_{k-1}]$.
When dealing with sparse $A$, we use the definitions 
\[
J(i)=\{ j\in[d]\colon A_{i, j} \neq 0 \}\text{~~and~~}I(j)=\{ i\in[n]\colon A_{i, j} \neq 0 \}.
\]

For a diagonal weighting matrix $\Tau$ and convex function $g$,  we define $\prox_{\Tau, g}(x) = \arg\min_{u} g(u) + \frac{1}{2}\| u-x\|^2_{\Tau^{-1}}$.
We will frequently use the prox-inequality, which states that for a $\mu$-strongly convex function $g$ with $\mu \geq 0$, we have
\begin{equation}\label{eq: prox_ineq}
x^+ = \prox_{\Tau, g}(x) \iff \langle x^+ - x, \Tau^{-1}(u - x^+) \rangle + g(u) - g(x^+) \geq \frac{\mu}{2} \| u-x^+\|^2, ~~\forall u,
\end{equation}
which follows by convexity of $g$ and definition of $x^+$.

We make frequent use of the following notation for a distance measure from initial point $(x_0,y_0)$ to the solution set:
\begin{equation} \label{eq:Dstar}
D_\star = \min_{(x_\star, y_\star) \in \mathcal{Z}_\star}\left(\| x_\star-x_0 \|^2 + \| y_\star-y_0 \|^2\right),
\end{equation}
where $\mathcal{Z}_\star$ is the set of primal-dual solutions, which is nonempty by~\Cref{asmp: asmp1}. Also denote 
\begin{equation} \label{eq:DZ}
D_{\mathcal{Z}} = \max_{(x, y) \in \mathcal{Z}} \left(\| x-x_0\|^2 + \|y-y_0\|^2\right)
\end{equation}
for any compact set $\mathcal{Z}$. It is known from~\cite{alacaoglu2020random} that iterates of PURE-CD are almost surely bounded and that they converge almost surely to a solution.

\section{Algorithm}
The PURE-CD algorithm proposed in~\cite{alacaoglu2020random} is a variant of primal-dual hybrid gradient (PDHG), which is a deterministic algorithm for~\eqref{eq:prob}.
PDHG can be seen as a gradient descent-ascent method with extrapolation. 
One of the most straightforward algorithms for \eqref{eq:prob} is gradient descent-ascent (GDA), whose iterates have the form
\begin{equation}\tag{GDA}
\begin{aligned}
    \bar x_{k+1} & = \prox_{\tau, g}(\bar x_k - \tau A^\top \bar y_{k})\\
    \bar y_{k+1} & = \prox_{\sigma, h^\ast}(\bar y_k + \sigma A \bar x_{k+1}),
\end{aligned}
\end{equation}
for positive step sizes $\tau$ and $\sigma$.
By contrast, one iteration of PDHG has the form
\begin{equation}\label{eq: pdhg}\tag{PDHG}
\begin{aligned}
    \bar x_{k+1} &= \prox_{\tau, g}(\bar x_k - \tau A^\top (2\bar y_{k}-\bar y_{k-1})) \\
    \bar y_{k+1} &= \prox_{\sigma, h^\ast}(\bar y_k + \sigma A \bar x_{k+1}),
\end{aligned}
\end{equation}
where the extrapolated iterate $(2\bar y_k-\bar y_{k-1})$ in place of the of $\bar y_k$ used by GDA.
An equivalent form of PDHG is 
\begin{subequations}\label{eq: tripd}
\begin{align}
\label{eq: tripd.1}
    \bar x_{k+1} &= \prox_{\tau, g}(\hat x_k - \tau A^\top \bar y_{k}) \\
    \label{eq: tripd.2}
    \bar y_{k+1} &= \prox_{\sigma, h^\ast}(\bar y_k + \sigma A  \bar x_{k+1}) \\
    \label{eq: tripd.3}
    \hat x_{k+1} &= \bar x_{k+1} - \tau A^\top (\bar y_{k+1} -  \bar y_k).
\end{align}
\end{subequations}

PURE-CD builds on~\eqref{eq: tripd}. In the case of dense $A$ (see \Cref{alg: purecd_dense}), it computes $\bar{x}_{k+1}$ as in \eqref{eq: tripd.1}, but then updates just a single, randomly chosen component $i_k$ of $y$ to obtain $y_{k+1}$. It then takes an extrapolation step like \eqref{eq: tripd.3}, based on the difference between $y_k$ and $y_{k+1}$ (which differ in only a single component).

When $A$ is sparse and $g$ is separable (that is, $g(x)=\sum_{j=1}^d g_j(x^{(j)})$, a sparse-friendly version of \Cref{alg: purecd_dense} can be designed to exploit the sparsity of $A$ and require only $O(|J(i_k)|)$ operations on iteration $k$. Such an algorithm is shown in \Cref{alg: purecd_sparse} (proposed in ~\cite{alacaoglu2020random}), for the case in which $i_k$ is chosen from a uniform distribution on $[n]$ at each iteration. 
We note that~\Cref{alg: purecd_dense} and~\Cref{alg: purecd_sparse} are equivalent with dense $A$ (when $p^{(i)}=1/n$ for all $i \in [n]$), but they are different with sparse $A$. 
Specifically, for all $j\not\in J(i_k)$,~\Cref{alg: purecd_dense} sets $x_{k+1}^{(j)} = \bar x_{k+1}^{(j)}$, which requires knowledge of  all components of $\bar x_{k+1}$. 
On the other hand,~\Cref{alg: purecd_sparse} sets $x_{k+1}^{(j)}=x_k^{(j)}$ for all $j\not\in J(i_k)$. 
In fact, this is the main reason for lower per-iteration cost in~\Cref{alg: purecd_sparse}.

The efficiency of \Cref{alg: purecd_sparse} depends critically on the fact that  at iteration $k$,  $x_k$ and $x_{k+1}$ differ only in the components $j \in J(i_k)$. 
Thus, we compute only the components $j \in J(i_k)$ of the vector $\bar{x}_{k+1}$ (which are subsequently used in computation of the corresponding components of $x_{k+1}$)
and simply copy across the remaining components $j \notin J(i_k)$ from $x_k$ to $x_{k+1}$.
The computation of $A_{i_k} \bar x_{k+1}$ on line~\ref{st: alg_sparse_yk} of \Cref{alg: purecd_sparse} also requires only knowledge of $\bar x_{k+1}^{(j)}$ for $j\in J(i_k)$. 
Moreover, the vector $A^\top y_k$ can be maintained using a classical technique borrowed from  coordinate descent (see for example~\cite{nesterov2012efficiency,wright2015coordinate,chambolle2018stochastic}), namely:
\begin{align*}
    A^\top y_{k+1} &= A^\top y_{k} + A^\top (y_{k+1} - y_k) = A^\top y_{k} + A_{i_k}^\top (y_{k+1}^{(i_k)} - y_k^{(i_k)}) \\
    \iff (A^\top y_{k+1})^{(j)} &=  \begin{cases}
    (A^\top y_{k})^{(j)} + A_{i_k, j} (y_{k+1}^{(i_k)} - y_k^{(i_k)}), &  j\in J(i_k),\\
     (A^\top y_{k})^{(j)}, & j\not\in J(i_k).
     \end{cases}
\end{align*}
The cost of this procedure is  $O(|J(i_k)|)$ operations.

\begin{algorithm}
\caption{PURE-CD with dense data~\cite{alacaoglu2020random}}\label{alg: purecd_dense}
\begin{algorithmic}[1]
\STATE Initialize $x_0\in \dom g, y_0\in \dom h^\ast$
\FOR {$k\geq 0$} 
\STATE $\bar x_{k+1} = \prox_{\Tau_k, g}(x_k - \Tau_k A^\top y_k)$\label{eq: wh8}
\STATE $\text{Pick } i_{k} \in [n] \text{ with } \Pr(i_k = i) = p^{(i)}\text{ and } P =\diag(p^{(1)}, \dots, p^{(n)})$
\STATE $y_{k+1}^{(i_{k})} = \prox_{\sigma_k^{(i_{k})}, h^\ast_{i_{k}}}(y_k^{(i_{k})} + \sigma_k^{(i_{k})} A_{i_k} \bar x_{k+1})$
\STATE $y_{k+1}^{(i)} = y_k^{(i)}, \forall i \neq i_{k}$
\STATE $x_{k+1} = \bar x_{k+1} - \Tau_k \Theta_k  A^\top P^{-1} (y_{k+1} - y_k)$\label{st: step_extrap_dense}
\ENDFOR
\end{algorithmic}
\end{algorithm}

\begin{algorithm}
\caption{PURE-CD with sparse data~\cite{alacaoglu2020random}}\label{alg: purecd_sparse}
\begin{algorithmic}[1]
\STATE Initialize $x_0\in \dom g, y_0\in \dom h^\ast$ and recall $J(i) = \{ j\in[d]\colon A_{i, j} \neq 0 \}$, $I(j) = \{ i\in[n]\colon A_{i, j} \neq 0 \}$
\FOR {$k\geq 0$} 
\STATE $\text{Pick } i_{k} \in [n] \text{ with } \Pr(i_k = i) = \frac{1}{n}$
\STATE $\bar x_{k+1}^{(j)} = \prox_{\tau_k^{(j)}, g_j}\left(x_k^{(j)} - \tau_k^{(j)} (A^\top y_k)^{(j)}\right),~~\forall j \in J(i_k)$
\STATE $y_{k+1}^{(i_{k})} = \prox_{\sigma_k^{(i_{k})}, h^\ast_{i_{k}}}(y_k^{(i_{k})} + \sigma_k^{(i_{k})} A_{i_k} \bar x_{k+1})$\label{st: alg_sparse_yk}
\STATE $y_{k+1}^{(i)} = y_k^{(i)}, \forall i \neq i_{k}$
\STATE $x_{k+1}^{(j)} = \bar x_{k+1}^{(j)} - \tau_k^{(j)} \theta_k^{(j)} A_{i_k, j} (y_{k+1}^{(i_k)} - y_k^{(i_k)}), \; \forall j \in J(i_{k}) $\label{st: rg4}
\STATE $x_{k+1}^{(j)} = x_k^{(j)}, \forall j \not\in J(i_{k})$
\ENDFOR
\end{algorithmic}
\end{algorithm}

For future reference, we define
\begin{equation} \label{eq:def.ybar}
    \bar y_{k+1} = \prox_{\Sigma_k, h^\ast}(y_k + \Sigma_k A\bar x_{k+1}),
\end{equation}
where $\Sigma_k = \diag (\sigma_k^{(1)}, \dotsc, \sigma_k^{(n)})$, as defined in \Cref{sec: notation}.
Let us note that in~\Cref{alg: purecd_sparse}, we have $\bar x_{k+1} = \prox_{\Tau_k, g}(x_k - \tau_k A^\top y_k)$.
However, since each iteration of~\Cref{alg: purecd_sparse} only uses $\bar x_{k+1}^{(j)}$ for $j\in J(i_k)$, we compute only these entries, as described earlier.

\section{Convergence in the Convex-Concave Case}\label{sec: sec_dense_cvxccv}

We focus now on the convex-concave case, where \Cref{asmp: asmp1} holds but  strong convexity does not necessarily hold for $g$ or $h^*$. 
\Cref{sec: cvx_ccv_dense} considers the case in which  $A$ is dense,  deriving the results in the PURE-CD row of  \Cref{tb: 1}.
We also specialize the  results to the  important special cases of linearly constrained optimization and ERM; see \Cref{sec: lincons_erm}.
In \Cref{sec: cvx_ccv_sparse}, we consider the case of $A$ sparse, deriving the results in the second column of \Cref{tb: 2} for PURE-CD. 
In \Cref{sec: cvx_ccv_sparse}, we also derive a result analyzing one iteration of \Cref{alg: purecd_sparse} that we use later in \Cref{sec: str_cvx_str_ccv} and \Cref{sec: str_cvx_ccv}.

\subsection{Convergence Analysis for Dense $A$ (\Cref{alg: purecd_dense})}\label{sec: cvx_ccv_dense}

We start with a lemma to analyze the behavior of a single iteration of~\Cref{alg: purecd_dense}.
Our analysis in this lemma follows~\cite{alacaoglu2020random} but is simpler, since we focus here on a specific case with dense $A$ and we do not include a general smooth function of $x$ in the objective, in addition to the function $g$ which is handled with a prox operation.
For clarity of presentation, we include the full proof.

\begin{lemma}[Single-iteration analysis of \Cref{alg: purecd_dense}]\label{lem: lem1_purecd}
Let~\Cref{asmp: asmp1} hold and recall the definition of $\bar y_{k+1}$ from~\eqref{eq:def.ybar}. 
In~\Cref{alg: purecd_dense}, set $\Theta_k = \theta I$. 
Then for any $(x, y) \in \mathbb{R}^d \times \mathbb{R}^n$ and for $k\geq 0$, we have
\begin{equation}\label{eq: purecd_one_iter}
\begin{aligned}
& 2(L(\bar x_{k+1}, y) - L(x, \bar y_{k+1})) + \| x-x_{k+1} \|^2_{\Tau_k^{-1}} + \| y- \bar y_{k+1}\|^2_{\Sigma_k^{-1}} \\
& \leq \| x-x_k\|^2_{\Tau_k^{-1}}+\| y-y_k\|^2_{\Sigma_k^{-1}} 
- \| \bar y_{k+1} - y_k\|^2_{\Sigma_k^{-1}} - \theta^2 \| A^\top P^{-1}(y_{k+1} - y_k)\|^2_{\Tau_k} \\
& \quad + 2\theta \langle x-x_{k+1}, A^\top P^{-1}(y_{k+1} - y_k) \rangle
 +2 \langle x-\bar x_{k+1}, A^\top(y_k - \bar y_{k+1}) \rangle.
\end{aligned}
\end{equation}
Moreover, when we set $\Tau_k = \tau I$ and $\Sigma_k = \Sigma = \diag(\sigma^{(1)}, \dots, \sigma^{(n)})$, with step sizes satisfying $\sigma^{(i)} \tau \left(p^{(i)}\right)^{-1}  \|A_i\|^2 \leq \gamma < 1$ for all $i$, and $\theta=1$, then we have for any solution $(x_\star, y_\star)$ that
\begin{equation}\label{eq: purecd_sum_resid_general1}
 \sum_{k=0}^{K-1} \mathbb{E}\| y_{k+1} - y_k\|^2_{\Sigma^{-1} P^{-1}} \leq \frac{1}{1-\gamma} \left(\frac{1}{\tau}\|x_\star -x_0\|^2 + \| y_\star -y_0\|^2_{\Sigma^{-1}P^{-1}} \right).
\end{equation}
\end{lemma}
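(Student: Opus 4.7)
The plan for \eqref{eq: purecd_one_iter} is the standard primal-dual energy argument: apply the prox-inequality \eqref{eq: prox_ineq} twice, once to the primal update $\bar x_{k+1}=\prox_{\Tau_k,g}(x_k-\Tau_k A^\top y_k)$ with test point $u=x$, and once to the virtual full dual update $\bar y_{k+1}=\prox_{\Sigma_k,h^*}(y_k+\Sigma_k A\bar x_{k+1})$ from \eqref{eq:def.ybar} with test point $u=y$. Rearranging each via the three-point identity $2\langle a-b, M(c-a)\rangle = \|b-c\|_M^2-\|a-c\|_M^2-\|a-b\|_M^2$ yields a weighted-norm telescope plus bilinear terms involving $\langle A\bar x_{k+1},y_k\rangle$, $\langle Ax,y_k\rangle$, $\langle A\bar x_{k+1},y\rangle$ and $\langle A\bar x_{k+1},\bar y_{k+1}\rangle$. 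Adding the two bounds, these assemble into $2(L(\bar x_{k+1},y)-L(x,\bar y_{k+1}))$ on the left, up to exactly the residual $2\langle x-\bar x_{k+1}, A^\top(y_k-\bar y_{k+1})\rangle$ that will survive on the right of \eqref{eq: purecd_one_iter}.

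At this stage the bound uses $\|x-\bar x_{k+1}\|^2_{\Tau_k^{-1}}$ rather than the target $\|x-x_{k+1}\|^2_{\Tau_k^{-1}}$. The extrapolation step at line~\ref{st: step_extrap_dense} of \Cref{alg: purecd_dense}, with $\Theta_k=\theta I$, gives $\bar x_{k+1}-x_{k+1}=\theta\Tau_k A^\top P^{-1}(y_{k+1}-y_k)$, so expanding $\|x-\bar x_{k+1}\|^2_{\Tau_k^{-1}}$ around $x_{k+1}$ introduces precisely the terms $-2\theta\langle x-x_{k+1}, A^\top P^{-1}(y_{k+1}-y_k)\rangle+\theta^2\|A^\top P^{-1}(y_{k+1}-y_k)\|_{\Tau_k}^2$ that are needed. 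Discarding the harmless $-\|\bar x_{k+1}-x_k\|^2_{\Tau_k^{-1}}\le 0$ produces \eqref{eq: purecd_one_iter}.

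For \eqref{eq: purecd_sum_resid_general1}, I specialize $(x,y)=(x_\star,y_\star)$ with $\theta=1$, $\Tau_k=\tau I$ and $\Sigma_k=\Sigma$ in \eqref{eq: purecd_one_iter}. By \eqref{eq:saddle}, the Lagrangian term on the left is nonnegative and is dropped. The conditional expectation $\mathbb{E}_k$ then exploits that only coordinate $i_k$ of $y$ changes: $\mathbb{E}_k[P^{-1}(y_{k+1}-y_k)]=\bar y_{k+1}-y_k$ and $\mathbb{E}_k\|y_{k+1}-y_k\|^2_{\Sigma^{-1}P^{-1}}=\|\bar y_{k+1}-y_k\|^2_{\Sigma^{-1}}$. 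Computing $\mathbb{E}_k\|A^\top P^{-1}(y_{k+1}-y_k)\|^2_{\Tau}=\tau\sum_i (p^{(i)})^{-1}\|A_i\|^2(\bar y_{k+1}^{(i)}-y_k^{(i)})^2$ and invoking $\sigma^{(i)}\tau\|A_i\|^2/p^{(i)}\le\gamma$ bounds this by $\gamma\|\bar y_{k+1}-y_k\|^2_{\Sigma^{-1}}$. The main bookkeeping obstacle is the cross term: substituting $x_{k+1}=\bar x_{k+1}-\tau A^\top P^{-1}(y_{k+1}-y_k)$ yields $\mathbb{E}_k\langle x_\star-x_{k+1},A^\top P^{-1}(y_{k+1}-y_k)\rangle = \langle x_\star-\bar x_{k+1}, A^\top(\bar y_{k+1}-y_k)\rangle + \mathbb{E}_k\|A^\top P^{-1}(y_{k+1}-y_k)\|^2_{\Tau}$, whose first piece cancels \emph{exactly} with the residual $2\langle x_\star-\bar x_{k+1}, A^\top(y_k-\bar y_{k+1})\rangle$ surviving from \eqref{eq: purecd_one_iter}.

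What remains on the left is $\mathbb{E}_k\|x_\star-x_{k+1}\|^2_{\Tau^{-1}}+\|y_\star-\bar y_{k+1}\|^2_{\Sigma^{-1}}$. To telescope I need a $y_{k+1}$-quantity rather than a $\bar y_{k+1}$-quantity, and a short coordinate-wise computation yields the identity $\mathbb{E}_k\|y_\star-y_{k+1}\|^2_{\Sigma^{-1}P^{-1}}-\|y_\star-y_k\|^2_{\Sigma^{-1}P^{-1}}=\|y_\star-\bar y_{k+1}\|^2_{\Sigma^{-1}}-\|y_\star-y_k\|^2_{\Sigma^{-1}}$, which does precisely the conversion. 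Substituting gives the one-step recursion $\tau^{-1}\mathbb{E}_k\|x_\star-x_{k+1}\|^2+\mathbb{E}_k\|y_\star-y_{k+1}\|^2_{\Sigma^{-1}P^{-1}}+(1-\gamma)\mathbb{E}_k\|y_{k+1}-y_k\|^2_{\Sigma^{-1}P^{-1}}\le \tau^{-1}\|x_\star-x_k\|^2+\|y_\star-y_k\|^2_{\Sigma^{-1}P^{-1}}$. Taking full expectation, summing over $k=0,\dots,K-1$, and discarding the nonnegative terminal iterate-distance terms at $k=K$ yields \eqref{eq: purecd_sum_resid_general1}. The two hardest pieces are the bilinear cross-term cancellation and the $\bar y_{k+1}\to y_{k+1}$ conversion identity, both of which rely delicately on the coordinate-sampling structure of the dual update.
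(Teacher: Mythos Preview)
Your proposal is correct and follows essentially the same argument as the paper: the prox-inequalities for $\bar x_{k+1}$ and $\bar y_{k+1}$, the expansion $\|x-\bar x_{k+1}\|^2_{\Tau_k^{-1}}=\|x-x_{k+1}\|^2_{\Tau_k^{-1}}+\theta^2\|A^\top P^{-1}(y_{k+1}-y_k)\|^2_{\Tau_k}-2\theta\langle x-x_{k+1},A^\top P^{-1}(y_{k+1}-y_k)\rangle$ from the extrapolation step, and for the second claim the same cross-term cancellation (your computation reproduces the paper's \eqref{eq: 3_errors2}), the coordinate-wise bound $\tau\mathbb{E}_k\|A^\top P^{-1}(y_{k+1}-y_k)\|^2\le\gamma\|\bar y_{k+1}-y_k\|^2_{\Sigma^{-1}}$, and the conversion identity $\mathbb{E}_k\|y_\star-y_{k+1}\|^2_{\Sigma^{-1}P^{-1}}=\|y_\star-\bar y_{k+1}\|^2_{\Sigma^{-1}}+\|y_\star-y_k\|^2_{\Sigma^{-1}(P^{-1}-I)}$ (the paper invokes \Cref{lem: tech_lemma_one_iter} for this).
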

\begin{remark}
In the first result of the lemma, the first term on the LHS of \eqref{eq: purecd_one_iter} is used to get the gap function in view of~\eqref{eq: opt_cond_def}, while the remaining terms on the LHS make up the Lyapunov function 
 and hence will telescope for appropriate choices of $\Tau_k$ and $\Sigma_k$.
The third and fourth terms on the RHS are used to cancel the fifth and sixth terms. 
\end{remark}

The following result is an immediate consequence of the bound \eqref{eq: purecd_sum_resid_general1} in \Cref{lem: lem1_purecd}. 
\begin{corollary}
Suppose the setup of \Cref{lem: lem1_purecd} holds.
By setting $p^{(i)} = \frac{\|A_i\|}{\sum_{i=1}^n \|A_i\|}$, $\tau = \frac{1}{\sum_{i=1}^n \|A_i\|}$, $\sigma_k^{(i)}=\sigma^{(i)} =\frac{\gamma}{\|A_i\|}$, for $\gamma < 1$ and $\theta = 1$ we have for any solution $(x_\star, y_\star)$ that
\begin{equation}\label{eq: purecd_sum_residual}
 \sum_{k=0}^{K-1} \mathbb{E}\| y_{k+1} - y_k\|^2_{\Sigma^{-1} P^{-1}} \leq \frac{\left(\sum_{i=1}^n \|A_i\|\right) (\|x_\star -x_0\|^2 + \| y_\star -y_0\|^2 )}{(1-\gamma)\gamma} .
\end{equation}
\end{corollary}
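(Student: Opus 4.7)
The plan is to observe that this corollary is essentially a direct algebraic specialization of \eqref{eq: purecd_sum_resid_general1} under the stated parameter choices, so the task reduces to verifying the step-size condition and then simplifying the two terms on the right-hand side.

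First I would check that the step-size condition $\sigma^{(i)} \tau (p^{(i)})^{-1} \|A_i\|^2 \leq \gamma < 1$ of \Cref{lem: lem1_purecd} is met. Substituting
\[
\sigma^{(i)} = \frac{\gamma}{\|A_i\|}, \quad \tau = \frac{1}{\sum_{j=1}^n \|A_j\|}, \quad p^{(i)} = \frac{\|A_i\|}{\sum_{j=1}^n \|A_j\|},
\]
the product telescopes and gives exactly $\gamma$, so the condition is satisfied with equality.

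Next I would compute the coefficients appearing on the RHS of \eqref{eq: purecd_sum_resid_general1}. Clearly $1/\tau = \sum_{j=1}^n \|A_j\|$. For the weighted dual norm,
\[
\frac{1}{\sigma^{(i)} p^{(i)}} = \frac{\|A_i\|}{\gamma} \cdot \frac{\sum_{j=1}^n \|A_j\|}{\|A_i\|} = \frac{\sum_{j=1}^n \|A_j\|}{\gamma},
\]
so $\|y_\star - y_0\|^2_{\Sigma^{-1} P^{-1}} = \gamma^{-1}\bigl(\sum_{j=1}^n \|A_j\|\bigr)\|y_\star - y_0\|^2$. Plugging these into \eqref{eq: purecd_sum_resid_general1} and pulling out the common factor $\sum_{j=1}^n \|A_j\|$ yields
\[
\sum_{k=0}^{K-1} \mathbb{E}\|y_{k+1}-y_k\|^2_{\Sigma^{-1} P^{-1}} \leq \frac{\sum_{j=1}^n \|A_j\|}{1-\gamma}\left(\|x_\star-x_0\|^2 + \frac{1}{\gamma}\|y_\star-y_0\|^2\right).
\]
Finally, using $\gamma < 1$ so that $1 \leq 1/\gamma$, I would bound the coefficient of $\|x_\star - x_0\|^2$ by $1/\gamma$ as well and combine the two terms into $\|x_\star-x_0\|^2 + \|y_\star-y_0\|^2$, producing the factor $\frac{1}{(1-\gamma)\gamma}$ as in \eqref{eq: purecd_sum_residual}.

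There is really no obstacle here; this corollary is a bookkeeping exercise whose only subtlety is ensuring that the same $\gamma$ that appears in the step-size bound also appears in the denominator of the final inequality, which happens precisely because the step-size condition is tight under the chosen importance-sampling probabilities.
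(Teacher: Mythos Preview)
Your proposal is correct and matches the paper's approach exactly: the paper states the corollary as ``an immediate consequence'' of \eqref{eq: purecd_sum_resid_general1} without writing out the proof, and your argument is precisely the bookkeeping one must do to verify this.
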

\begin{proof}[Proof of~\Cref{lem: lem1_purecd}]
By the prox-inequality \eqref{eq: prox_ineq} with $\mu=0$, applied to the definition of $\bar x_{k+1}$ in step~\ref{eq: wh8}, it follows for any $x$ that
\begin{align*}
\langle \bar x_{k+1} - x_k + \Tau_k A^\top y_k, \Tau_k^{-1}(x - \bar x_{k+1}) \rangle + g(x) -   g(\bar x_{k+1}) &\geq 0 \\
\Leftrightarrow \langle \Tau_k^{-1/2}(\bar x_{k+1} - x_k), \Tau_k^{-1/2}(x-\bar x_{k+1}) \rangle + \langle A^\top y_k, x-\bar x_{k+1} \rangle + g(x) - g(\bar x_{k+1}) &\geq 0.
\end{align*}
We use the standard equality $2\langle a, b \rangle = \|a+b\|^2- \|a\|^2 - \|b\|^2$ and rearrange to get
\begin{align}\label{eq: purecd_primal_ineq1}
\| x- \bar x_{k+1} \|^2_{\Tau_k^{-1}} + 2(g(\bar x_{k+1}) - g(x)) \leq \| x-x_k\|^2_{\Tau_k^{-1}} - \| \bar x_{k+1} - x_k \|^2_{\Tau_k^{-1}} + 2 \langle A^\top y_k, x-\bar x_{k+1} \rangle.
\end{align}
By the definition of $x_{k+1}$ in step~\ref{st: step_extrap_dense}, it follows that $\bar x_{k+1} = x_{k+1} + \Tau_k  \theta A^\top P^{-1}(y_{k+1}-y_k)$. This gives
\begin{align*}
\| x-\bar x_{k+1} \|^2_{\Tau_k^{-1}} = \| x-x_{k+1}\|^2_{\Tau_k^{-1}} + \theta^2 \| A^\top P^{-1}(y_{k+1} - y_k) \|^2_{\Tau_k} - 2 \theta \langle x-x_{k+1}, A^\top P^{-1}(y_{k+1} - y_k) \rangle.
\end{align*}
By substituting this expression into \eqref{eq: purecd_primal_ineq1}, we obtain
\begin{multline}\label{eq: purecd_primal_ineq2}
\| x-x_{k+1} \|^2_{\Tau_k^{-1}} + 2 (g(\bar x_{k+1})-g( x)) \leq \| x-x_k\|^2_{\Tau_k^{-1}} -\|\bar x_{k+1} - x_k\|^2_{\Tau_k^{-1}}+ 2 \langle A^\top y_k, x-\bar x_{k+1} \rangle \\
- \theta^2 \|A^\top P^{-1} (y_{k+1} - y_k) \|^2_{\Tau_k} + 2\theta  \langle x-x_{k+1}, A^\top P^{-1}(y_{k+1} - y_k) \rangle.
\end{multline}
From \eqref{eq: prox_ineq} with $\mu=0$ and the definition of $\bar y_{k+1}$ from~\eqref{eq:def.ybar}, we have for any $y$ that
\begin{align*}
 \langle \bar{y}_{k+1}-y_k - \Sigma_k A \bar{x}_{k+1}, \Sigma_k^{-1} (y-\bar{y}_{k+1}) \rangle + h^\ast(y)-h^\ast(\bar{y}_{k+1}) & \ge 0 \\
\Leftrightarrow  \;\; \langle \Sigma_k^{-1/2} (\bar{y}_{k+1} - y_k), \Sigma_k^{-1/2} (y-\bar{y}_{k+1}) \rangle  - \langle A \bar{x}_{k+1}, y-\bar{y}_{k+1}\rangle  + h^\ast(y)-h^\ast(\bar{y}_{k+1}) & \ge 0.
\end{align*}
By applying $2\langle a, b \rangle = \|a+b\|^2- \|a\|^2 - \|b\|^2$ to the first term and rearranging, we obtain 
\begin{align}\label{eq: purecd_dual_ineq1}
\| y-\bar y_{k+1} \|^2_{\Sigma_k^{-1}} + 2(h^\ast(\bar y_{k+1}) - h^\ast(y)) \leq \| y-y_k\|^2_{\Sigma_k^{-1}} - \| \bar y_{k+1} - y_k \|^2_{\Sigma_k^{-1}} - 2\langle A\bar x_{k+1}, y-\bar y_{k+1} \rangle.
\end{align}
We add~\eqref{eq: purecd_primal_ineq2} to~\eqref{eq: purecd_dual_ineq1} and drop the nonpositive term $-\| \bar x_{k+1} -x_k\|^2$ on the RHS to obtain
\begin{equation}\label{eq: pd_combined}
\begin{aligned}
    & 2\left(g(\bar x_{k+1}) + h^\ast(\bar y_{k+1}) - g(x) - h^\ast(y)\right) +  \| x- x_{k+1} \|^2_{\Tau_k^{-1}} + \| y-\bar y_{k+1}\|^2_{\Sigma_k^{-1}} \\
    & \leq  \| x-x_{k} \|^2_{\tau_k^{-1}} + \| y- y_{k}\|^2_{\Sigma_k^{-1}}
    -\| \bar y_{k+1} - y_k\|^2_{\Sigma_k^{-1}} - \theta^2 \|A^\top P^{-1} (y_{k+1} - y_k)\|^2_{\Tau_k} \\
    & \quad + 2 \langle A^\top y_k, x-\bar x_{k+1} \rangle +2\theta\langle x-x_{k+1}, A^\top P^{-1}(y_{k+1} - y_k) \rangle  - 2\langle A\bar x_{k+1}, y-\bar y_{k+1} \rangle.
\end{aligned}
\end{equation}
Recall that
\begin{equation}\label{eq: g_deff}
L(\bar x_{k+1}, y) - L(x, \bar y_{k+1}) = g(\bar x_{k+1}) + \langle A^\top y, \bar x_{k+1} -x \rangle - g(x) + h^\ast(\bar y_{k+1}) - \langle Ax, \bar y_{k+1} - y \rangle - h^\ast(y).
\end{equation}
We use this definition in~\eqref{eq: pd_combined} and note that $2\langle A^\top(y_k-y), x-\bar x_{k+1} \rangle + 2\langle x-\bar x_{k+1}, A^\top(y-\bar y_{k+1}) \rangle = 2\langle x-\bar x_{k+1}, A^\top(y_k-\bar y_{k+1}) \rangle$ to deduce \eqref{eq: purecd_one_iter}, the first claim of the lemma.

For the second result \eqref{eq: purecd_sum_resid_general1}, we start from~\eqref{eq: purecd_one_iter} and let $(x, y) = (x_\star, y_\star)$, $\Tau_k = \tau I$, and $\Sigma_k = \Sigma$. 
We estimate first the last two terms on the RHS of~\eqref{eq: purecd_one_iter}. 
By  noting that $\bar x_{k+1}$ does not depend on $i_{k}$ and $\mathbb{E}_k P^{-1}(y_k - y_{k+1}) = y_k - \bar y_{k+1}$, and by setting $\theta=1$ and using the definition of $\bar x_{k+1}$ from step~\ref{st: step_extrap_dense}, we obtain for the sum of these two terms that
\begin{align}
\nonumber
& \E_k\left[2\langle x_\star-x_{k+1}, \theta A^\top P^{-1}(y_{k+1} - y_k) \rangle + 2\langle x_\star -\bar x_{k+1}, A^\top(y_k - \bar y_{k+1}) \rangle \right] \\
\nonumber
&=  2 \E_k \langle x_\star-x_{k+1}, \theta A^\top P^{-1}(y_{k+1} - y_k) + 2 \E_k \langle x_\star -\bar x_{k+1}, A^\top P^{-1} (y_k - y_{k+1}) \rangle  \rangle \\
\nonumber
& =
2 \mathbb{E}_k \langle x_{k+1} - \bar x_{k+1}, A^\top P^{-1}(y_k - y_{k+1}) \rangle \\
\nonumber
& = -2\tau  \mathbb{E}_k\langle A^\top P^{-1}(y_{k+1} - y_k), A^\top P^{-1}(y_k - y_{k+1})  \rangle \\
\label{eq: 3_errors2}
& = 2\tau \mathbb{E}_k\| A^\top P^{-1}(y_{k+1} - y_k) \|^2.
\end{align}
Note that this expression partially cancels with the third-last term on the RHS of \eqref{eq: purecd_one_iter} which, when the conditional expectation $\E_k$ is taken, has the same form as   \eqref{eq: 3_errors2} but a different coefficient.
By taking the conditional expectation $\E_k$ of both sides of of  \eqref{eq: purecd_one_iter}, substituting \eqref{eq: 3_errors2} for the last two terms on the RHS of  \eqref{eq: purecd_one_iter}, using $\theta=1$, and noting that $L(\bar x_{k+1}, y_\star) - L(x_\star, \bar y_{k+1}) \geq 0$, we obtain
\begin{align}
\nonumber
&     \frac{1}{\tau} \mathbb{E}_k\| x_\star-x_{k+1} \|^2 + \| y_\star- \bar y_{k+1}\|^2_{\Sigma^{-1}} \\
\label{eq: purecd_one_iter X}
& \leq \frac{1}{\tau} \| x_\star-x_k\|^2+\| y_\star-y_k\|^2_{\Sigma^{-1}} 
- \| \bar y_{k+1} - y_k\|^2_{\Sigma^{-1}} + \tau \E_k \| A^\top P^{-1}(y_{k+1} - y_k)\|^2.
\end{align}
Next, we use~\Cref{lem: tech_lemma_one_iter} with $\Phi_k = \Sigma^{-1}P^{-1}$ and $y=y_\star$ to derive 
\[
\mathbb{E}_k \| y_\star - y_{k+1} \|^2_{\Sigma^{-1}P^{-1}}=\|y_\star-\bar y_{k+1} \|^2_{\Sigma^{-1}} + \| y_\star-y_k \|^2_{\Sigma^{-1}(P^{-1}-I)}.
\]
By using this identity to substitute for $\| y_\star - \bar y_{k+1} \|^2_{\Sigma^{-1}}$ on the LHS of \eqref{eq: purecd_one_iter X}, and rearranging, we obtain
\begin{equation}\label{eq: ineq_1}
\begin{aligned}
& \frac{1}{\tau} \mathbb{E}_k\| x_\star - x_{k+1} \|^2 + \mathbb{E}_k \| y_\star - y_{k+1} \|^2_{\Sigma^{-1} P^{-1}} \\
& \leq \frac{1}{\tau} \| x_\star-x_k\|^2 +  \| y_\star-y_k\|^2_{\Sigma^{-1} P^{-1}}
 -  \| \bar y_{k+1} - y_k\|^2_{\Sigma^{-1}}  + \tau \mathbb{E}_k \| A^\top P^{-1}(y_{k+1} - y_k) \|^2.
\end{aligned}
\end{equation}
Since $y_{k+1} - y_k$ is one-sparse for all $k \geq 0$, we have that
\begin{align}
\nonumber
    \E_k\| A^\top P^{-1} (y_{k+1} - y_k) \|^2 & = \mathbb{E}_k\sum_{j=1}^d \left(\left(A^\top P^{-1}(y_{k+1}-y_k)\right)^{(j)}\right)^2 \\
    \nonumber
    & = \mathbb{E}_k\sum_{j=1}^d \left(\left(A^\top((p^{(i_k)})^{-1}(\bar y_{k+1}^{(i_{k})}-y_k^{(i_{k})})e_{i_{k}})\right)^{(j)}\right)^2 \\
    \nonumber
& = \mathbb{E}_k\sum_{j=1}^d \left(\left(A_{i_{k}}^\top (p^{(i_k)})^{-1}(\bar y_{k+1}^{(i_{k})} - y_k^{(i_{k})})\right)^{(j)}\right)^2  \\
\nonumber
& =\mathbb{E}_k\sum_{j=1}^d A_{i_{k}, j}^2 (p^{(i_k)})^{-2} (\bar y_{k+1}^{(i_{k})} - y_k^{(i_{k})})^2 \\
\nonumber
& = \sum_{i=1}^n \sum_{j=1}^d (p^{(i)})^{-1} A_{i, j}^2 (\bar y_{k+1}^{(i)} - y_k^{(i)})^2 \\
& = \sum_{i=1}^n  (p^{(i)})^{-1} \| A_i \|^2 (\bar y_{k+1}^{(i)} - y_k^{(i)})^2.
\label{eq: purecd_error_sparsity}
\end{align}
Recalling the step size condition  $\sigma^{(i)} \tau (p^{(i)})^{-1} \|A_i\|^2 \leq \gamma < 1$, we have for the last two terms on the RHS of~\eqref{eq: ineq_1} and for $k\geq 0$ that
\begin{multline}\label{eq: lem1_error_terms_cancel}
    -\|\bar y_{k+1} - y_k\|^2_{\Sigma^{-1}} + \tau \mathbb{E}_k  \| A^\top P^{-1}(y_{k+1} - y_k) \|^2 = \sum_{i=1}^n \left(-\frac{1}{\sigma^{(i)}} + \tau (p^{(i)})^{-1} \|A_i\|^2 \right) \left( \bar y_{k+1}^{(i)} - y_k^{(i)} \right)^2 \\
    \leq \sum_{i=1}^n \frac{-1+\gamma}{\sigma^{(i)}} \left( \bar y_{k+1}^{(i)} - y_k^{(i)} \right)^2 = -(1-\gamma) \| \bar y_{k+1} - y_k\|^2_{\Sigma^{-1}}= -(1-\gamma)\E_k \| y_{k+1} - y_k\|^2_{\Sigma^{-1}P^{-1}},
\end{multline}
where the last step used 
\Cref{lem: tech_lemma_one_iter} with $\Phi = \Sigma^{-1}P^{-1}$ and $y=y_k$.
By substituting the estimate \eqref{eq: lem1_error_terms_cancel} onto~\eqref{eq: ineq_1}, taking total expectation, and summing the resulting inequality over $k=0,1,\dots,K-1$, we obtain the second result of the lemma. 
\end{proof}

\subsubsection{Two Special Cases: Linearly-constrained Optimization and ERM}
\label{sec: lincons_erm}

We recall from~\eqref{eq:prob} the problem
\begin{equation*}
    \min_{x\in\mathbb{R}^d} \max_{y\in\mathbb{R}^n} L(x, y):= \sum_{i=1}^n \langle A_ix, y^{(i)} \rangle - h_i^\ast(y^{(i)}) + g(x),
\end{equation*}
where $h(z) = \sum_{i=1}^n h_i(z^{(i)})$.
In this section, we consider two important special cases of $h$ where we obtain improved complexities for convergence in the primal iterate $x$.
We focus on these cases because of their many applications and because they admit stronger results
than the general case. 
These stronger results also come with simpler proofs --- we can bypass the main difficulty by showing guarantees for primal quantities rather than the primal-dual gap.
In both cases, we improve the best-known complexities in the literature (see~\Cref{tb: 1}, \Cref{sec:cc}, and \Cref{sec: sect_overview}).

In the first special case, we consider $h$ Lipschitz, so that \eqref{eq:prob} reduces to
\begin{equation}\label{eq: case1_prob}
    \min_{x\in\mathbb{R}^d} \sum_{i=1}^n h_i(A_i x) + g(x).
\end{equation}
This formulation has applications in empirical risk minimization with nonsmooth losses, where $h$ can be hinge loss or the  $\ell_1$ or $ \ell_2$ norm, covering applications such as SVM, least absolute deviation regression, TV-regularized regression, and many popular formulations.
Lipschitzness of $h_i$ is a common assumption for solving ERM problems with nonsmooth loss functions, see~\cite[Cor. 3.7]{allen2017katyusha},~\cite[Cor. 3]{zhang2015stochastic},~\cite[Thm. 2]{shalev2013stochastic}.
Our complexity result here is to bound number of iterations required to output $\xout$ such that
$\mathbb{E} \left[ h(A\xout) + g(\xout) - h(Ax_\star) - g(x_\star) \right] \leq \varepsilon$.
(This result follows from a bound like \eqref{eq: opt_cond_def}.)

The second special case is convex linearly constrained optimization, which is 
\begin{equation}\label{eq: case2_prob}
    \min_{x\in\R^d} \, g(x), \text{~such that~} Ax\in C,
\end{equation}
for a nonempty, closed, convex set $C$ with the separable structure $C = C_1\times \dots \times C_n$.
This can be obtained from \eqref{eq:prob} when we set $h_i=\delta_{C_i}$.
In the case of $C_i=\{b_i\}$, for $b_i\in\mathbb{R}$, an important instance of  formulation \eqref{eq: case2_prob} is distributed optimization, where the equality constraints enforce consistency of the variable vector across processors.
By setting $C$ to be a shifted nonpositive orthant (that is, $C_i = \{t \, | \, t \ge b_i \}$, this framework includes general linear constraints. 
The need to perform a prox operation with $g$ in \Cref{alg: purecd_dense} restricts the types of objectives for which this algorithm may be practical. 
Nevertheless this class includes such problems as linear programming ($g(x)=c^\top x$ for some $c \in \R^d$) and convex quadratic programming ($g(x)=\tfrac12 x^\top Qx + c^\top x$ for $Q$ symmetric and positive semidefinite), provided that systems with coefficient matrices of the form $Q+\tau D$ for positive diagonal $D$ can be solved efficiently (such as when $Q$ is banded). 
For dense $Q$, we can reformulate the convex QP by factoring $Q=LL^\top$ in a preprocessing step, introducing a variable $t = L^\top x$, and redefining the objective to be $\tfrac12 t^\top t + c^\top x$. 
The prox operation involving $g$ for this reformulated problem can be performed efficiently.
Using again a bound like  \eqref{eq: opt_cond_def}, we bound the number of iterates required to output $(\xout)$ such that $\mathbb{E} | g(\xout) - g(x_\star) | \leq \varepsilon$ and  $\mathbb{E}\dist(A\xout, C) \leq \varepsilon$.

We first prove an intermediate result on the convergence of an auxiliary function
\begin{equation*}
    \mathbb{E} \max_{y} \Big[L(x^K, y) - L(x_\star, \bar y^K) - \frac{1}{K} \| y-y_0 \|^2_{\Sigma^{-1}P^{-1}}\Big],
\end{equation*}
where $x^K, \bar y^K$ are defined in \Cref{lem: if3}.
The form of this expression is similar to the expected gap function in~\eqref{eq: opt_cond_def}, the differences being that the maximization is only over $y$ and that there an additional negative quadratic term.
Then in \Cref{th: rates_for_part1} we show how to convert this type of a guarantee to optimality measures for~\eqref{eq: case1_prob} and~\eqref{eq: case2_prob}.

The main novelty in the analysis of these cases, which yields the improved convergence rates is the special use of the full dimensional update $\bar y_{k+1}$. We maintain the running average $\bar y^{K}$ of these vectors as a potential dual output. However, in these special cases, we do not need the output of such a variable, so they are not actually maintained by the algorithm and thus do not incur any implementation costs. See also the discussion at the beginning of Section~\ref{sec: sif3}.

\begin{lemma}\label{lem: if3}
Let~\Cref{asmp: asmp1} hold. In Algorithm~\ref{alg: purecd_dense}, let 
\begin{align*}
    \Tau_k = \Tau = \tau I, ~~~~ \tau = \frac{1}{\sum_{i=1}^n \|A_i\|},~~~~ \theta =1,~~~~ \Sigma_k=\Sigma = \diag(\sigma^{(1)},\dots,\sigma^{(n)}),~~~~ \sigma^{(i)} = \frac{\gamma}{\|A_i\|},
\end{align*}
where $\gamma \in(0,1)$, $p^{(i)} = \frac{\|A_i\|}{\sum_{i=1}^n \|A_i\|}$, and $\Theta_k=\theta I = I$.
Define $x^K = \frac{1}{K} \sum_{k=1}^K \bar x_{k}$, $\bar y^K = \frac{1}{K} \sum_{k=1}^K \bar y_k$. Then we have 
\begin{equation*}
    \mathbb{E} \max_y \Big[L(x^K, y) - L(x_\star, \bar y^K) - \frac{1}{K} \| y-y_0 \|^2_{\Sigma^{-1}P^{-1}}\Big] \leq \frac{\sum_{i=1}^n \|A_i\|}{K\gamma(1-\gamma)} D_\star.
\end{equation*}
\end{lemma}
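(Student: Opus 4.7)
The plan is to specialize the single-iteration bound \eqref{eq: purecd_one_iter} to $x = x_\star$ with $y$ arbitrary, then recast it as a telescoping inequality that retains a slack term of size $(1-\gamma)$. The first observation is that the two cross terms $2\theta \langle x_\star - x_{k+1}, A^\top P^{-1}(y_{k+1} - y_k) \rangle + 2\langle x_\star - \bar x_{k+1}, A^\top (y_k - \bar y_{k+1}) \rangle$ on the RHS of \eqref{eq: purecd_one_iter} do not depend on $y$, since none of $x_{k+1}$, $\bar x_{k+1}$, $y_{k+1} - y_k$, or $\bar y_{k+1}$ depend on $y$. Taking the conditional expectation $\mathbb{E}_k$ and reusing \eqref{eq: 3_errors2}, these collapse to $2\tau\mathbb{E}_k \|A^\top P^{-1}(y_{k+1} - y_k)\|^2$. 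Because $\bar x_{k+1}$ and $\bar y_{k+1}$ are $\sigma(i_0,\dots,i_{k-1})$-measurable (they are computed before $i_k$ is drawn), conditioning leaves the gap $L(\bar x_{k+1}, y) - L(x_\star, \bar y_{k+1})$ untouched.

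Next I would invoke \Cref{lem: tech_lemma_one_iter} with $\Phi = \Sigma^{-1}P^{-1}$ to rewrite $\|y - \bar y_{k+1}\|^2_{\Sigma^{-1}}$ as $\mathbb{E}_k \|y - y_{k+1}\|^2_{\Sigma^{-1}P^{-1}} - \|y - y_k\|^2_{\Sigma^{-1}(P^{-1}-I)}$, and absorb the correction into the RHS via the identity $\|y - y_k\|^2_{\Sigma^{-1}(P^{-1}-I)} + \|y - y_k\|^2_{\Sigma^{-1}} = \|y - y_k\|^2_{\Sigma^{-1}P^{-1}}$. The stated step-size choice ensures $\sigma^{(i)}\tau (p^{(i)})^{-1}\|A_i\|^2 = \gamma$, so the computation in \eqref{eq: lem1_error_terms_cancel} yields $-\|\bar y_{k+1}-y_k\|^2_{\Sigma^{-1}} + \tau \mathbb{E}_k\|A^\top P^{-1}(y_{k+1}-y_k)\|^2 \leq -(1-\gamma)\mathbb{E}_k\|y_{k+1}-y_k\|^2_{\Sigma^{-1}P^{-1}}$. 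Unlike in the proof of the preceding corollary, I would \emph{retain} this slack on the LHS. Summing the resulting per-iteration inequality over $k=0,\dots,K-1$, taking total expectation, and dropping the nonnegative $\mathbb{E}\|x_\star - x_K\|^2_{\Tau^{-1}}$ and $\mathbb{E}\|y - y_K\|^2_{\Sigma^{-1}P^{-1}}$ gives, for each fixed $y\in\R^n$,
\begin{equation*}
\tfrac{2}{K}\sum_{k=0}^{K-1}\mathbb{E}\left[L(\bar x_{k+1}, y) - L(x_\star, \bar y_{k+1})\right] + \tfrac{1-\gamma}{K}\sum_{k=0}^{K-1}\mathbb{E}\|y_{k+1} - y_k\|^2_{\Sigma^{-1}P^{-1}} \leq \tfrac{1}{K}\|x_\star - x_0\|^2_{\Tau^{-1}} + \tfrac{1}{K}\|y - y_0\|^2_{\Sigma^{-1}P^{-1}},
\end{equation*}
and convexity of $L(\cdot,y)$ together with concavity of $L(x_\star,\cdot)$ replaces the left-most sum by $L(x^K, y) - L(x_\star, \bar y^K)$.

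The main obstacle, and the point where the full-dimensional ghost update $\bar y_{k+1}$ plays a decisive role, is moving $\max_y$ inside the expectation. With $y$ fixed and deterministic, the derivation above only yields a bound on $\max_y \mathbb{E}\left[\,\cdot\,\right]$; to obtain $\mathbb{E}\max_y\left[\,\cdot\,\right]$ I would carry out the argument pathwise. The tech-lemma identity $\mathbb{E}_k \|y - y_{k+1}\|^2_{\Sigma^{-1}P^{-1}} = \|y - \bar y_{k+1}\|^2_{\Sigma^{-1}} + \|y - y_k\|^2_{\Sigma^{-1}(P^{-1}-I)}$ differs from its pathwise analogue by a martingale difference that is linear in $y$; summing these differences produces a noise process of the form $\langle \xi_K, y\rangle$ with $\xi_K = \sum_k \Sigma^{-1}P^{-1}(y_{k+1} - \mathbb{E}_k y_{k+1})$. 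Because $y_{\max}$ is random, I would decouple $y_{\max}$ from the noise by Young's inequality $2\langle \xi_K, y\rangle \leq \alpha\|y\|^2_{\Sigma^{-1}P^{-1}} + \alpha^{-1}\|\xi_K\|^2_{\Sigma P}$, and then bound $\mathbb{E}\|\xi_K\|^2_{\Sigma P}$ by the retained slack $\sum_k\mathbb{E}\|y_{k+1}-y_k\|^2_{\Sigma^{-1}P^{-1}}$ via the orthogonality of martingale differences. Tuning $\alpha$ absorbs the noise and costs the extra factor $\tfrac{1}{1-\gamma}$ in the final bound, which is exactly why the slack must be kept rather than discarded. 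Plugging in $\tau^{-1} = \sum_i\|A_i\|$ and the common diagonal value $(\Sigma^{-1}P^{-1})_{ii} = \gamma^{-1}\sum_j\|A_j\|$ converts the RHS into $\tfrac{\sum_i\|A_i\|}{K\gamma(1-\gamma)}D_\star$, as claimed.
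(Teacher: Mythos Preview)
Your overall architecture matches the paper's: work pathwise from \eqref{eq: purecd_one_iter} with $x=x_\star$, isolate the $y$-dependent martingale error (what the paper calls $\mathcal{E}_k(y)$), sum, take $\max_y$ then $\mathbb{E}$, and control $\mathbb{E}\max_y\bigl[\sum_k\mathcal{E}_k(y)-\|y-y_0\|^2_{\Sigma^{-1}P^{-1}}\bigr]$ by a Young-type bound. Your observation that the cross terms are independent of $y$ and can therefore be pulled outside the $\max$ before applying \eqref{eq: 3_errors2} is exactly right, and your description of the noise as linear in $y$ with second moment controlled by $\sum_k\mathbb{E}\|y_{k+1}-y_k\|^2_{\Sigma^{-1}P^{-1}}$ is precisely \Cref{lem: exp_max_lemma}.

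The gap is in your last step. You claim that tuning $\alpha$ in Young's inequality lets the retained slack $(1-\gamma)\sum_k\mathbb{E}\|y_{k+1}-y_k\|^2_{\Sigma^{-1}P^{-1}}$ \emph{absorb} the noise. It cannot, at least not with the quadratic penalty fixed at $\tfrac{1}{K}\|y-y_0\|^2_{\Sigma^{-1}P^{-1}}$ as in the lemma. With that coefficient you are forced to take $\alpha=1$ (otherwise the $\max_y$ on the right blows up, since $y$ is unconstrained), and Young then gives a noise bound of $S:=\sum_k\mathbb{E}\|y_{k+1}-y_k\|^2_{\Sigma^{-1}P^{-1}}$, which exceeds the slack $(1-\gamma)S$. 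Taking $\alpha>(1-\gamma)^{-1}$ would let the slack dominate but forces a larger coefficient on $\|y-y_0\|^2$, changing the statement. Either way you are left needing an \emph{independent} bound on $S$. The paper supplies this via \eqref{eq: purecd_sum_residual} (the second conclusion of \Cref{lem: lem1_purecd}, specialized in the Corollary), which gives $S\le \frac{\sum_i\|A_i\|}{\gamma(1-\gamma)}D_\star$; plugging this in yields the stated right-hand side. In fact the paper simply \emph{drops} the slack rather than retaining it, and relies entirely on \eqref{eq: purecd_sum_residual}. So your proof sketch is correct up to this missing ingredient; add the appeal to \eqref{eq: purecd_sum_residual} and the argument closes.
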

\begin{proof}[Proof of~\Cref{lem: if3}]
Let $x=x_\star$ in~\eqref{eq: purecd_one_iter} and $y$ be a fixed vector that can be random.
Let us set
\begin{equation}\label{eq: exp_max_err1}
\mathcal{E}_k(y) = \left( - \| y - \bar y_{k+1} \|^2_{\Sigma^{-1}} + \| y-y_{k+1} \|^2_{\Sigma^{-1}P^{-1}} + \| y-y_k\|^2_{\Sigma^{-1}(I-P^{-1})} \right).
\end{equation}
By using the settings $\theta=1$, $\Tau_k =\tau I$, $\Sigma_k = \Sigma$ in \eqref{eq: purecd_one_iter}, adding and subtracting certain terms, and using the definition \eqref{eq: exp_max_err1}, we obtain
\begin{align}
\nonumber
& 2(L(\bar x_{k+1}, y) - L(x_\star, \bar y_{k+1})) + \frac{1}{\tau} \| x_\star-x_{k+1} \|^2 + \| y- y_{k+1}\|^2_{\Sigma^{-1}P^{-1}} \\
\nonumber
& \leq \frac{1}{\tau} \| x_\star-x_k\|^2+\| y-y_k\|^2_{\Sigma^{-1}P^{-1}}
- \| \bar y_{k+1} - y_k\|^2_{\Sigma^{-1}} - \tau \| A^\top P^{-1}(y_{k+1} - y_k)\|^2 \\
\label{eq: purecd_one_iter_2}
& \quad + 2 \langle x_\star-x_{k+1}, A^\top P^{-1}(y_{k+1} - y_k) \rangle
+2 \langle x_\star-\bar x_{k+1}, A^\top(y_k - \bar y_{k+1}) \rangle + \mathcal{E}_k(y).
\end{align}
By summing~\eqref{eq: purecd_one_iter_2} over $k=0,1,\dotsc,K-1$, telescoping, removing nonnegative terms from the LHS, and subtracting  $2\|y-y_0\|^2_{\Sigma^{-1}P^{-1}}$ from both sides, we obtain 
\begin{align*}
& 2\sum_{k=0}^{K-1}\left[L(\bar x_{k+1}, y) - L(x_\star, \bar y_{k+1})\right] - 2\| y-y_0\|^2_{\Sigma^{-1}P^{-1}} \\
& \leq \frac{1}{\tau} \| x_\star-x_0\|^2 
- \sum_{k=0}^{K-1} \left[ \| \bar y_{k+1} - y_k\|^2_{\Sigma^{-1}} + \tau \| A^\top P^{-1}(y_{k+1} - y_k)\|^2 \right] \\
& \quad +\sum_{k=0}^{K-1} \left[ 2 \langle x_\star-x_{k+1}, A^\top P^{-1}(y_{k+1} - y_k) \rangle
+2 \langle x_\star-\bar x_{k+1}, A^\top(y_k - \bar y_{k+1}) \rangle + \mathcal{E}_k(y) \right] - \|y-y_0\|^2_{\Sigma^{-1}P^{-1}}.
\end{align*}
By taking the maximum of both sides over $y$, then taking the expectation of both sides over all random variables in the algorithm, we obtain
\begin{align}
\nonumber
& 2\mathbb{E} \max_y \left[ \sum_{k=0}^{K-1} (L(\bar x_{k+1}, y) - L(x_\star, \bar y_{k+1})) - \| y-y_0\|^2_{\Sigma^{-1}P^{-1} } \right] \\
\nonumber
& \leq \frac{1}{\tau}\| x_\star-x_0\|^2  
+ \mathbb{E} \max_y \left[\sum_{k=0}^{K-1}\mathcal{E}_k(y)-\|y-y_0\|^2_{\Sigma^{-1}P^{-1}}\right]\\
\nonumber
& \quad +\sum_{k=0}^{K-1} \left(
-  \mathbb{E}\| \bar y_{k+1} - y_k \|^2_{\Sigma^{-1}} -  \tau  \mathbb{E} \|A^\top P^{-1}(y_{k+1} - y_k) \|^2\right)  \\
\label{eq:ix1}
& \quad + \sum_{k=0}^{K-1}\mathbb{E}\left[2 \langle x_\star-x_{k+1}, A^\top P^{-1}(y_{k+1} - y_k) \rangle
+2 \langle x_\star-\bar x_{k+1}, A^\top(y_k - \bar y_{k+1}) \rangle \right].
\end{align}
We use \eqref{eq: 3_errors2} (with $\theta=1$) and the tower property of conditional expectation (since the free variable $y$ does not appear in \eqref{eq: 3_errors2}) to substitute for the terms of the final summation of this expression. By combining with the term involving $\tau$ in the third line,
we obtain
\begin{align}
\nonumber
& 2\mathbb{E} \max_y \left[ \sum_{k=0}^{K-1} (L(\bar x_{k+1}, y) - L(x_\star, \bar y_{k+1})) -\| y-y_0 \|^2_{\Sigma^{-1}P^{-1}} \right] \\
\nonumber
& \leq \frac{1}{\tau}\| x_\star-x_0\|^2 
+ \mathbb{E} \max_y \left[ \sum_{k=0}^{K-1}\mathcal{E}_k(y)-\| y-y_0 \|^2_{\Sigma^{-1}P^{-1}}\right] \\
\label{eq:tx2}
& \quad +\sum_{k=0}^{K-1} \left(-  \mathbb{E}\| \bar y_{k+1} - y_k \|^2_{\Sigma^{-1}} +  \tau  \mathbb{E} \|A^\top P^{-1}(y_{k+1} - y_k) \|^2\right).
\end{align}
Because the step sizes $\sigma^{(i)}$ satisfy $\sigma^{(i)} \tau (p^{(i)})^{-1} \|A_i\|^2 \leq \gamma < 1$, we can use \eqref{eq: lem1_error_terms_cancel}  to deduce that 
the final summation on the RHS  of \eqref{eq:tx2} is nonpositive. Thus, we can drop this term and write
\begin{align}
\nonumber
& 2\mathbb{E} \max_y \left[ \sum_{k=0}^{K-1} (L(\bar x_{k+1}, y) - L(x_\star, \bar y_{k+1})) -\| y-y_0 \|^2_{\Sigma^{-1}P^{-1}} \right] \\
\label{eq: ineq_2}
& \leq \frac{1}{\tau}\| x_\star-x_0\|^2 
+ \mathbb{E} \max_y \left[\sum_{k=0}^{K-1}  \mathcal{E}_k(y)-\| y-y_0 \|^2_{\Sigma^{-1}P^{-1}}\right].
\end{align}
For the term $\mathcal{E}_k(y)$ we rearrange the definition~\eqref{eq: exp_max_err1} to obtain
\begin{equation}\label{eq: sk3}
\mathcal{E}_k(y) =  2 \langle y, (\bar y_{k+1} - y_k) - P^{-1}(y_{k+1} - y_k) \rangle_{\Sigma^{-1}} + \|y_{k+1}\|^2_{\Sigma^{-1}P^{-1}} + \| y_k \|^2_{\Sigma^{-1}(I-P^{-1})} - \| \bar y_{k+1}\|^2_{\Sigma^{-1}}.
\end{equation}
Noting that the last three terms in this expression are independent of $y$, we obtain the following for the final summation on the RHS of \eqref{eq: ineq_2}:
\begin{align}
\nonumber
& \mathbb{E} \max_y \left[\sum_{k=0}^{K-1} \mathcal{E}_k(y)-\| y-y_0 \|^2_{\Sigma^{-1}P^{-1}}\right] \\
\nonumber
& \leq \mathbb{E} \max_y \left[ \sum_{k=0}^{K-1} 2 \langle y, (\bar y_{k+1} - y_k) - P^{-1}(y_{k+1} - y_k) \rangle_{\Sigma^{-1}} -\| y-y_0 \|^2_{\Sigma^{-1}P^{-1}} \right] \\
\nonumber
& \quad + \mathbb{E} \left[ \sum_{k=0}^{K-1}\|y_{k+1}\|^2_{\Sigma^{-1}P^{-1}} + \| y_k \|^2_{\Sigma^{-1}(I-P^{-1})} - \| \bar y_{k+1}\|^2_{\Sigma^{-1}} \right] \\
\nonumber
& =\mathbb{E} \max_y \left[ \sum_{k=0}^{K-1} 2 \langle y, (\bar y_{k+1} - y_k) - P^{-1}(y_{k+1} - y_k) \rangle_{\Sigma^{-1}} - \| y-y_0 \|^2_{\Sigma^{-1}P^{-1}}\right] \\
& \leq  \sum_{k=0}^{K-1} \mathbb{E} \| y_k-y_{k+1} \|^2_{\Sigma^{-1}P^{-1}},\label{eq: expmax_term1}
\end{align}
where the equality is due to tower property and by setting $\Phi = \Sigma^{-1}P^{-1}$ and $y=0$ in \Cref{lem: tech_lemma_one_iter}.
The final inequality uses Lemma~\ref{lem: exp_max_lemma} with $u_0 = y_0$, $\underline k = 0$, and $\mathcal{U}=\mathbb{R}^n$.
By substituting \eqref{eq: expmax_term1} into \eqref{eq: ineq_2}, we obtain
\[
 2\mathbb{E} \max_y \left[ \sum_{k=0}^{K-1} (L(\bar x_{k+1}, y) - L(x_\star, \bar y_{k+1})) -\| y-y_0 \|^2_{\Sigma^{-1}P^{-1}} \right] 
 \leq \frac{1}{\tau}\| x_\star-x_0\|^2 
+ \sum_{k=0}^{K-1} \mathbb{E} \| y_k-y_{k+1} \|^2_{\Sigma^{-1}P^{-1}}.
\]
By dividing both sides by $K$ and using the definitions of $\tau$ together with 
\eqref{eq: purecd_sum_residual} in the RHS, we obtain
\begin{align}
\nonumber
& 2\mathbb{E}\max_y \left[ \frac{1}{K}\sum_{k=0}^{K-1} (L(\bar x_{k+1}, y) - L(x_\star, \bar y_{k+1})) - \frac{1}{K} \| y-y_0 \|^2_{\Sigma^{-1}P^{-1}} \right] \\
&\leq \frac{\sum_{i=1}^n \| A_i\| }{K} \|x_\star - x_0 \|^2 + \frac{\sum_{i=1}^n \|A_i\|}{K \gamma(1-\gamma)} \left( \| x_\star - x_0 \|^2 + \| y_\star - y_0\|^2 \right).
\label{eq: purecd_lincons_eq1}
\end{align}
We recall $\bar y^K = \frac{1}{K} \sum_{k=1}^K \bar y_{k}$ and $\bar x^K = \frac{1}{K}\sum_{k=1}^K \bar x_k$, and use convexity of $L$ w.r.t. $x$ and concavity of $L$ w.r.t. $y$ to bound the summation term on the LHS of~\eqref{eq: purecd_lincons_eq1} as follows:
\[
\frac{1}{K}\sum_{k=0}^{K-1} (L(\bar x_{k+1}, y) - L(x_\star, \bar y_{k+1})) \ge  L(x^K, y) - L(x_\star, \bar y^{K}).
\]
By substituting into~\eqref{eq: purecd_lincons_eq1} we obtain
\begin{align}
\mathbb{E} \max_y \Big[L(x^K, y) &- L(x_\star, \bar y^K) - \frac{1}{K} \| y-y_0 \|^2_{\Sigma^{-1}P^{-1}}\Big]
\notag\\
&\leq \frac{\sum_{i=1}^n \| A_i\| }{2K}  \|x_\star - x_0 \|^2   + \frac{\sum_{i=1}^n \|A_i\|}{2K \gamma(1-\gamma)} \left( \| x_\star - x_0 \|^2 + \| y_\star - y_0\|^2 \right) \notag\\
&\leq \frac{\sum_{i=1}^n \| A_i\| }{K\gamma(1-\gamma)} \left( \| x_\star - x_0 \|^2 + \| y_\star - y_0\|^2 \right) = \frac{\sum_{i=1}^n \| A_i\| }{K\gamma(1-\gamma)} D_\star,\label{eq: cvx_ccv_gap_bd_jensen}
\end{align}
where the last inequality is due to $1\leq \frac{1}{\gamma(1-\gamma)}$ since $\gamma \in (0, 1)$ and the equality is by the definition \eqref{eq:Dstar} of $D_\star$.
\end{proof}

Next, we show how to use the result of~\Cref{lem: if3} to derive guarantees for the two special cases~\eqref{eq: case1_prob} and~\eqref{eq: case2_prob}.
\begin{theorem}\label{th: rates_for_part1}
Let~\Cref{asmp: asmp1} hold. In Algorithm~\ref{alg: purecd_dense}, let 
\begin{align*}
    \Tau_k = \Tau = \tau I, ~~~~ \tau = \frac{1}{\sum_{i=1}^n \|A_i\|},~~~~ \Sigma_k=\Sigma = \diag(\sigma^{(1)},\dots,\sigma^{(n)}),~~~~ \sigma^{(i)} = \frac{\gamma}{\|A_i\|},
\end{align*}
where $\gamma \in(0,1)$, $p^{(i)} = \frac{\|A_i\|}{\sum_{i=1}^n \|A_i\|}$, and $\Theta_k=\theta I = I$.
Define $x^K = \frac{1}{K} \sum_{k=1}^K \bar x_{k}$. Then we have the following.
\begin{itemize}
\item Case 1: If $h$ is $L_h$-Lipschitz continuous (see~\eqref{eq: case1_prob}), we have
\begin{equation*}
\mathbb{E} \left[ h(Ax^K) + g(x^K) - h(Ax_\star) - g(x_\star) \right] \leq \frac{2\sum_{i=1}^{n} \|A_i\|}{K\gamma(1-\gamma)} \left( 4L_h^2 + \| x_\star - x_0\|^2 \right).
\end{equation*}
\item Case 2: If $h=\delta_{C}$ with a nonempty convex closed set $C$ of the form $C=C_1\times \dots \times C_n$~(see~\eqref{eq: case2_prob}), we have
\begin{subequations}
\begin{align}
\mathbb{E}   |g(x^K) - g(x_\star)| &\leq \frac{8\sum_{i=1}^n \|A_i\|}{K\gamma(1-\gamma)} \left[\left( \| x_\star - x_0 \| + \| y_\star - y_0\| + \| y_0\| + \|y_\star \| \right)\|y_\star\| + D_\star+\|y_0\|^2\right], \label{eq: ta1}\\
\mathbb{E}  \left[\dist(Ax^K, C)\right]  &\leq \frac{8\sum_{i=1}^n \|A_i\|}{K\gamma(1-\gamma)} \left( \| x_\star - x_0 \| + \| y_\star - y_0\| + \| y_0\| + \|y_\star \| \right).\label{eq: ta2}
\end{align}
\end{subequations}
\end{itemize}
\end{theorem}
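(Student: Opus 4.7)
My plan is to apply Lemma~\ref{lem: if3} pointwise. The bound on $\mathbb{E}\max_y[\cdot]$ in that lemma implies, for any (possibly random) $y$,
\[
\mathbb{E}\Bigl[L(x^K,y)-L(x_\star,\bar y^K)-\frac{1}{K}\|y-y_0\|^2_{\Sigma^{-1}P^{-1}}\Bigr]\leq \frac{\sum_{i=1}^n\|A_i\|}{K\gamma(1-\gamma)}D_\star.
\]
With the prescribed step sizes, $\Sigma^{-1}P^{-1}=\gamma^{-1}(\sum_i\|A_i\|)I$, so the penalty collapses to $\frac{\sum_i\|A_i\|}{K\gamma}\mathbb{E}\|y-y_0\|^2$. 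It remains to pick $y$ cleverly in each case.

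\textbf{Case 1.} Lipschitz continuity of $h$ yields $\dom h^*\subseteq\{y:\|y\|\leq L_h\}$, so $\|y_\star\|\leq L_h$, and because $y_0\in\dom h^*$ by initialization also $\|y_0\|\leq L_h$. Let $y^*=\arg\max_y L(x^K,y)$, which likewise lies in the ball of radius $L_h$. By Fenchel--Young, $L(x^K,y^*)=h(Ax^K)+g(x^K)$ and $L(x_\star,y_\star)=h(Ax_\star)+g(x_\star)$, together with $L(x_\star,\bar y^K)\leq L(x_\star,y_\star)$. Plugging $y=y^*$ into the pointwise bound and using $\|y^*-y_0\|^2\leq 4L_h^2$, $D_\star\leq\|x_\star-x_0\|^2+4L_h^2$, and $1/\gamma\leq 1/(\gamma(1-\gamma))$ yields the claim after combining the constants.

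\textbf{Case 2.} For the objective upper bound take $y=0$: this gives $L(x^K,0)=g(x^K)$, while the KKT identities $Ax_\star\in C$ and $y_\star\in N_C(Ax_\star)$ imply $L(x_\star,\bar y^K)\leq L(x_\star,y_\star)=g(x_\star)$, producing $\mathbb{E}[g(x^K)-g(x_\star)]\leq\frac{\sum\|A_i\|}{K\gamma(1-\gamma)}D_\star+\frac{\sum\|A_i\|}{K\gamma}\|y_0\|^2$. For feasibility, assume $Ax^K\notin C$ (else trivial), set $\hat u=(Ax^K-P_C(Ax^K))/\dist(Ax^K,C)$, and take $y=\rho\hat u$ with $\rho>\|y_\star\|$. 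The projection characterization $\langle c-P_C(Ax^K),\hat u\rangle\leq 0$ for all $c\in C$ gives $h^*(\hat u)=\langle P_C(Ax^K),\hat u\rangle$, hence $L(x^K,\rho\hat u)=g(x^K)+\rho\dist(Ax^K,C)$. Combined with the subgradient lower bound $g(x^K)-g(x_\star)\geq-\|y_\star\|\dist(Ax^K,C)$ (which follows from $-A^\top y_\star\in\partial g(x_\star)$ together with $h^*(y_\star)=\langle Ax_\star,y_\star\rangle\geq\langle P_C(Ax^K),y_\star\rangle$), the pointwise lemma yields
\[
(\rho-\|y_\star\|)\mathbb{E}\dist(Ax^K,C)\leq \frac{\sum\|A_i\|}{K\gamma(1-\gamma)}\bigl[D_\star+2\rho^2+2\|y_0\|^2\bigr].
\]
Optimizing $\rho$ near $\|y_\star\|+\sqrt{D_\star+\|y_0\|^2}$ and using $\sqrt{D_\star}\leq\|x_\star-x_0\|+\|y_\star-y_0\|$ delivers \eqref{eq: ta2}. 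For \eqref{eq: ta1}, use the decomposition $|X|=X+2X_-$ with $X=g(x^K)-g(x_\star)$ and $X_-\leq\|y_\star\|\dist(Ax^K,C)$ to obtain $\mathbb{E}|X|\leq\mathbb{E}[g(x^K)-g(x_\star)]+2\|y_\star\|\mathbb{E}\dist(Ax^K,C)$, then substitute the two bounds above.

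\textbf{Main obstacle.} The central difficulty is Case 2: identifying the right $y=\rho\hat u$ so that the projection characterization collapses $h^*(\hat u)$ to a scalar and exposes $\dist(Ax^K,C)$ on the left-hand side, and then optimally balancing the quadratic penalty $\rho^2/K$ against the linear factor $(\rho-\|y_\star\|)$ to recover the crucial $\sqrt{D_\star}/K$ (rather than $D_\star/K$) scaling on feasibility. Converting the one-sided objective bound into an absolute-value bound via $|X|=X+2X_-$ using the subgradient inequality is a smaller but essential step.
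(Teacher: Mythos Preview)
Your argument is correct. Case~1 is essentially identical to the paper's proof: the paper picks $\tilde y\in\partial h(Ax^K)$, which is precisely your $y^*=\arg\max_y L(x^K,y)$, and handles the $L(x_\star,\bar y^K)$ term by Fenchel--Young, which is equivalent to your use of the saddle-point inequality $L(x_\star,\bar y^K)\le L(x_\star,y_\star)=F(x_\star)$.

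In Case~2 the two proofs diverge on the feasibility bound. The paper sets $y=\frac{K\gamma}{4\sum_i\|A_i\|}(Ax^K-P_C(Ax^K))$, a $K$-dependent scaling chosen so that the quadratic penalty balances the linear term; this produces an inequality of the form $c_1\mathbb{E}[\dist^2(Ax^K,C)]-\|y_\star\|\mathbb{E}[\dist(Ax^K,C)]\le c_2$, after which the paper invokes Jensen to replace $\mathbb{E}[\dist^2]$ by $(\mathbb{E}[\dist])^2$, completes the square, and takes a square root. Your approach instead fixes a unit direction $\hat u$ and a free scalar $\rho$, obtains the linear inequality $(\rho-\|y_\star\|)\mathbb{E}[\dist(Ax^K,C)]\le C(D_\star+2\rho^2+2\|y_0\|^2)$, and optimizes over $\rho$. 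Both routes yield the same $O(1/K)$ bound with the stated constant~$8$; your variant is slightly more elementary in that it avoids Jensen and the square-root step and works directly with $\mathbb{E}[\dist]$ rather than passing through $\mathbb{E}[\dist^2]$. One small point worth making explicit in your write-up: on the event $\{Ax^K\in C\}$ you should set $\hat u=0$ (so $y=0$), in which case $L(x^K,0)=g(x^K)\ge F(x_\star)=g(x_\star)\ge L(x_\star,\bar y^K)$ since $x^K$ is then feasible, and the pointwise lower bound $(\rho-\|y_\star\|)\dist(Ax^K,C)=0$ still holds. For the absolute-value objective bound, your decomposition $|X|=X+2X_-$ with $X_-\le\|y_\star\|\dist(Ax^K,C)$ is exactly what the paper does (though less explicitly).
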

\begin{remark}
The results in this theorem correspond to the bounds claimed for PURE-CD in the first two columns of Table~\ref{tb: 1}. 
For the first case, we also use the description in Sec.~\ref{app: upper_lower_bd}.
\end{remark}
\begin{remark}
Although this worst-case upper bound may suggest the choice $\gamma=1/2$, our experience with practical behavior of the algorithm and also the prior literature on PDHG-based methods~\cite{chambolle2011first,chambolle2016ergodic,chambolle2018stochastic,alacaoglu2020random} suggests a choice of $\gamma$ that is closer to $1$, for example,  $\gamma=0.99$.
\end{remark}
\begin{proof}[Proof of~\Cref{th: rates_for_part1}]
In this proof, we will associate the result in~\Cref{lem: if3} to optimality guarantees for~\eqref{eq: case1_prob} and~\eqref{eq: case2_prob}.

$\bullet$ Case 1: We argue as in~\cite[Thm. 11]{fercoq2019coordinate} and \cite{alacaoglu2019convergence}. 
First, we restate the result in~\eqref{eq: cvx_ccv_gap_bd_jensen} by using the definition of $L(\cdot,\cdot)$ from~\eqref{eq:prob}:
\begin{multline}\label{eq: dense_result1_explicit_bd}
    \mathbb{E} \max_y \left[ g(x^K) + \langle A x^K, y \rangle - h^\ast(y) - g(x_\star) - \langle Ax_\star, \bar y^K \rangle + h^\ast(\bar y^K) - \frac{1}{K} \| y-y_0 \|^2_{\Sigma^{-1}P^{-1}}\right] \\
    = \mathbb{E} \max_y \left[L(x^K, y) - L(x_\star, \bar y^K) - \frac{1}{K} \| y-y_0 \|^2_{\Sigma^{-1}P^{-1}}\right]
\leq \frac{\sum_{i=1}^n \| A_i\| }{K\gamma(1-\gamma)} D_\star.
\end{multline}
We now work on the LHS of this inequality to obtain the objective suboptimality.
Since $h\colon\mathbb{R}^n\to\mathbb{R}$ is $L_h$-Lipschitz, we have $\max_{y\in\dom h^\ast} \| y\|^2 \leq L_h^2$ (see, for example,~\cite[Cor. 13.3.3]{rockafellar1970convex},~\cite[Cor. 17.19]{bauschke2011convex}). 
We can also choose $y = \tilde y \in\partial h(Ax^K)\neq\emptyset$~(see~\cite[Thm. 23.4]{rockafellar1970convex}) such that $\tilde y\in \partial h(Ax^K) \iff Ax^K \in \partial h^\ast(\tilde y) \iff \tilde y \in \arg\max_v \langle Ax^K, v \rangle - h^\ast(v)$ and hence 
\begin{equation}\label{eq: jg3}
    \langle Ax^K, \tilde y \rangle - h^\ast(\tilde y) = h(Ax^K).
\end{equation} 
We also have $ \| \tilde y\| \leq L_h$ (see~\cite[Cor. 17.19(i, ii)]{bauschke2011convex}). Next, by the Fenchel-Young inequality, we have
\begin{equation}\label{eq: jg4}
    h^\ast(\bar y^K) - \langle Ax_\star, \bar y^K \rangle \geq -h(Ax_\star).
\end{equation} 
By \eqref{eq: jg3}, \eqref{eq: jg4}, we lower-bound the expression on the first line of~\eqref{eq: dense_result1_explicit_bd}
\begin{multline*}
    \mathbb{E} \max_y \left[ g(x^K) + \langle A x^K, y \rangle - h^\ast(y) - g(x_\star) - \langle Ax_\star, \bar y^K \rangle + h^\ast(\bar y^K) - \frac{1}{K} \| y-y_0 \|^2_{\Sigma^{-1}P^{-1}}\right] \\
    \geq \mathbb{E} \left[ h(Ax^K) + g(x^K) - h(Ax_\star) - g(x_\star) - \frac{1}{K} \| \tilde y - y_0\|^2_{\Sigma^{-1}P^{-1}} \right],
\end{multline*}
which, in view of the inequality in~\eqref{eq: dense_result1_explicit_bd} gives
\begin{align*}
&\mathbb{E} \left[ h(Ax^K) + g(x^K) - h(Ax_\star) - g(x_\star) - \frac{1}{K} \| \tilde y - y_0\|^2_{\Sigma^{-1}P^{-1}} \right] \leq \frac{\sum_{i=1}^n \| A_i\| }{K\gamma(1-\gamma)} D_\star \\
&\iff \mathbb{E} \left[ h(Ax^K) + g(x^K) - h(Ax_\star) - g(x_\star) \right] \leq \frac{\sum_{i=1}^n \| A_i\| }{K\gamma(1-\gamma)} D_\star + \frac{1}{K} \| \tilde y - y_0\|^2_{\Sigma^{-1}P^{-1}}.
\end{align*}
We get the result after using $\max_{y\in \dom h^\ast} \|y\| \leq L_h$ and $\Sigma^{-1} P^{-1} = \frac{\sum_{i=1}^n \|A_i\|}{\gamma} I$ to estimate 
\[
\| \tilde y - y_0 \|^2_{\Sigma^{-1}P^{-1}} = \frac{\sum_{i=1}^n\|A_i\|}{\gamma} \| \tilde y - y_0 \|^2 \leq \frac{\sum_{i=1}^n\|A_i\|}{\gamma} \left(2\| \tilde y\|^2 + 2\| y_0 \|^2\right) \leq \frac{\sum_{i=1}^n\|A_i\|}{\gamma} 4L_h^2.
\]
Similarly, we have 
\[
D_\star = \| x_\star - x_0\|^2 + \| y_\star - y_0\|^2 \leq \| x_0  - x_\star \|^2 + 2\|y_\star \|^2 + 2\|y_0\|^2 \leq \| x_0-x_\star \|^2 + 4L_h^2
\]
and that $\frac{1}{\gamma} \leq \frac{1}{\gamma(1-\gamma)}$ due to $\gamma \in (0, 1)$.

$\bullet$ Case 2: We use the same reasoning as~\cite[Thm. 11]{fercoq2019coordinate},~\cite[Lemma 1]{tran2018smooth},~\cite{alacaoglu2019convergence}. As in Case 1, the argument requires a particular choice for the free variable $y$, depending on the random variable $x^K$, so it is important to have a bound for $\mathbb{E} \max_y L(x^K, y) - L(x_\star, \bar y^K)$.
The inclusion case is handled implicitly on~\cite{tran2018smooth} so we provide a detailed proof for handling this case, which result in somewhat involved expressions.

Let us now estimate the LHS of~\eqref{eq: cvx_ccv_gap_bd_jensen}.
By the definition of conjugate functions and $h(x) = \delta_C(x)$, we have $h^\ast(y) = \max_{u\in C} \langle u, y \rangle$ in this case.
We next derive 
\begin{align}
L(x^K, y) - L(x_\star, \bar y^K) &= g(x^K) -g(x_\star)+ \langle Ax^K, y \rangle - \max_{u\in C}\langle u, y \rangle - \langle Ax_\star, \bar y^K \rangle + \max_{u\in C} \langle u, \bar y^K \rangle\notag \\
&= g(x^K) -g(x_\star)+ \min_{u\in C} \langle Ax^K - u, y \rangle - \min_{u\in C}\langle Ax_\star-u, \bar y^K \rangle \notag \\
&\geq g(x^K) -g(x_\star)+ \min_{u\in C} \langle Ax^K - u, y \rangle,\label{eq: gap_to_obj} 
\end{align}
where the last step is due to $\min_{u\in C} \langle Ax_\star - u, \bar y^K \rangle \leq 0$ since $Ax_\star \in C$.

We use this inequality on the LHS~\eqref{eq: cvx_ccv_gap_bd_jensen} to deduce
\begin{multline*}
    \mathbb{E} \max_y \Big[L(x^K, y) - L(x_\star, \bar y^K) - \frac{1}{K} \| y-y_0 \|^2_{\Sigma^{-1}P^{-1}}\Big] \\
    \geq \mathbb{E}\max_y\left[ g(x^K) -g(x_\star)+ \min_{u\in C} \langle Ax^K - u, y \rangle - \frac{1}{K} \| y-y_0 \|^2_{\Sigma^{-1}P^{-1}} \right].
\end{multline*}
On~\eqref{eq: cvx_ccv_gap_bd_jensen}, we use this inequality with $\sigma^{(i)} p^{(i)} = \frac{\gamma}{\sum_{j=1}^n \|A_j\|}$ for all $i$ and hence $\Sigma^{-1}P^{-1} = \frac{\sum_{j=1}^n \|A_j\|}{\gamma} I$ to derive
\begin{equation}\label{eq: ek3}
    \mathbb{E}\max_y\Big[ g(x^K) - g(x_\star) + \min_{u\in C}\langle Ax^K-u, y \rangle  - \frac{\sum_{i=1}^n\|A_i\|}{K\gamma} \| y-y_0\|^2 \Big] \leq \frac{\sum_{i=1}^n \|A_i\|}{K\gamma(1-\gamma)} D_\star.
\end{equation}
We then use Young's inequality to write
\begin{equation*}
    - \frac{\sum_{i=1}^n\|A_i\|}{K\gamma} \| y-y_0\|^2 \geq - \frac{2\sum_{i=1}^n\|A_i\|}{K\gamma} \| y\|^2 - \frac{2\sum_{i=1}^n\|A_i\|}{K\gamma} \| y_0\|^2.
\end{equation*}
We use this inequality and $\frac{1}{K\gamma} \leq \frac{1}{K\gamma(1-\gamma)}$ in~\eqref{eq: ek3} to obtain
\begin{equation}\label{eq: ek4}
    \mathbb{E}\max_y\Big[ g(x^K) - g(x_\star) + \min_{u\in C}\langle Ax^K-u, y \rangle - \frac{2\sum_{i=1}^n\|A_i\|}{K\gamma} \| y\|^2 \Big] \leq \frac{2\sum_{i=1}^n \|A_i\|}{K\gamma(1-\gamma)} (D_\star+\|y_0\|^2).
\end{equation}
We set $y=\frac{K\gamma}{4\sum_{i=1}^n\|A_i\|}(Ax^K - P_C(Ax^K))$ and use $\|Ax^K - P_C(Ax^K) \|^2 = \dist^2(Ax^K, C)$ to deduce
\begin{align*}
    & \mathbb{E}\max_y\Big[ g(x^K) - g(x_\star) + \min_{u\in C}\langle Ax^K-u, y \rangle - \frac{2\sum_{i=1}^n\|A_i\|}{K\gamma} \| y\|^2 \Big] \\
    & \geq \mathbb{E}\bigg[ g(x^K) - g(x_\star) + \frac{K\gamma}{4\sum_{i=1}^n\|A_i\|} \min_{u\in C}\langle Ax^K-u, Ax^K - P_C(Ax^K) \rangle 
    - \frac{K\gamma}{8\sum_{i=1}^n\|A_i\|} \dist^2(Ax^K, C) \bigg].
\end{align*}
An elementary argument shows that $\min_{u\in C}\langle Ax^K-u, Ax^K - P_C(Ax^K) \rangle$ is solved by $u=P_C(Ax^K)$ with optimal value $\| Ax^K - P_C(Ax^K) \|^2=\dist^2(Ax^K, C)$. 
By substituting into the lower bound above, we obtain
\begin{align*}
    & \mathbb{E}\max_y\Big[ g(x^K) - g(x_\star) + \min_{u\in C}\langle Ax^K-u, y \rangle -  \frac{2\sum_{i=1}^n\|A_i\|}{K\gamma} \| y\|^2 \Big] \\
   &  \geq \mathbb{E}\bigg[ g(x^K) - g(x_\star) + \frac{K\gamma}{8\sum_{i=1}^n\|A_i\|} \dist^2(Ax^K, C) \bigg].
\end{align*}
Combining this bound with~\eqref{eq: ek4} gives
\begin{equation}\label{eq: ek5}
    \mathbb{E}\bigg[ g(x^K) - g(x_\star) + \frac{K\gamma}{8\sum_{i=1}^n\|A_i\|} \dist^2(Ax^K, C)  \bigg] \leq \frac{2\sum_{i=1}^n \|A_i\|}{K\gamma(1-\gamma)} (D_\star+\|y_0\|^2).
\end{equation}
We first find an upper bound for $\mathbb{E}[g(x^K) - g(x_\star)]$.
For this, we use that $\dist^2(Ax^K, C) \geq 0$ to derive
\begin{equation} \label{eq:me3}
    \mathbb{E}\left[ g(x^K) - g(x_\star) \right] \leq \frac{2\sum_{i=1}^n \|A_i\|}{K\gamma(1-\gamma)} (D_\star+\|y_0\|^2).
\end{equation}
We proceed to derive a lower bound for $\mathbb{E}[g(x^K) - g(x_\star)]$ and an upper bound for the feasibility.
By strong duality, the definition of a saddle point, and the Cauchy-Schwarz inequality:
\begin{align*}
    g(x_\star) &= L(x_\star, y_\star) \leq L(x^K, y_\star) = g(x^K) + \langle Ax^K, y_\star \rangle - \max_{u\in C} \langle u, y_\star \rangle \\
    &=g(x^K) + \min_{u\in C}\langle Ax^K- u, y_\star \rangle \leq g(x^K) + \min_{u\in C}\| Ax^K- u\|\| y_\star\| = g(x^K) + \dist(Ax^K, C) \| y_\star\|,
\end{align*}
    from which it follows that
\begin{equation}
    \mathbb{E}[g(x^K) - g(x_\star)] \geq -\mathbb{E}[\dist(Ax^K, C)] \| y_\star \|.\label{eq:wj3}
\end{equation}
We use this inequality on the LHS of~\eqref{eq: ek5} to deduce that
\begin{equation*}
    \mathbb{E}\left[ g(x^K) - g(x_\star) + \frac{K\gamma}{8\sum_{i=1}^n \|A_i\|} \dist^2(Ax^K, C) \right]
    \geq \mathbb{E}\left[ - \dist(Ax^K, C) \| y_\star\| + \frac{K\gamma}{8\sum_{i=1}^n \|A_i\|} \dist^2(Ax^K, C) \right],
\end{equation*}
which, in view of~\eqref{eq: ek5} gives
\begin{equation}\label{eq: lin_cons_eq2}
\frac{K\gamma}{8\sum_{i=1}^n \|A_i\|}\mathbb{E}\left[\dist^2(Ax^K, C)\right] - \| y_\star \| \mathbb{E}\left[\dist(Ax^K, C)\right] \leq \frac{2\sum_{i=1}^n \|A_i\|}{K\gamma(1-\gamma) } \left(D_\star + \| y_0\|^2 \right).
\end{equation}
Recall that
$\dist(Ax^K, C) = \| Ax^K - P_C(Ax^K)\|$.
We use $\mathbb{E} \left[\dist^2( Ax^K, C)\right] \geq \left(\mathbb{E}\left[\dist(Ax^K, C)\right]\right)^2$ by Jensen's inequality and complete the square on LHS by adding to both sides $\frac{2\sum_{i=1}^n\|A_i\|}{K\gamma} \| y_\star \|^2$ and using $\frac{1}{\gamma} \|y_\star \|^2 \leq \frac{1}{\gamma(1-\gamma)}\|y_\star\|^2$ due to $\gamma > 0$ to derive
\begin{align*}
\left( \sqrt{\frac{K\gamma}{8\sum_{i=1}^n \|A_i\|}}\mathbb{E}\dist(Ax^K, C) - \sqrt{\frac{2\sum_{i=1}^n \|A_i\|}{K\gamma}} \| y_\star \| \right)^2 \leq \frac{2\sum_{i=1}^n \|A_i\|}{K\gamma(1-\gamma)} \left(D_\star + \|y_0\|^2 + \| y_\star \|^2\right).
\end{align*}
Taking the square root of both sides, we have
\begin{align*}
 \sqrt{\frac{K\gamma}{8\sum_{i=1}^n \|A_i\|}}\mathbb{E}\dist(Ax^K, C) \leq \sqrt{\frac{2\sum_{i=1}^n \|A_i\|}{K\gamma(1-\gamma)} (D_\star+ \|y_0\|^2 + \| y_\star \|^2)} + \sqrt{\frac{2\sum_{i=1}^n \|A_i\|}{K\gamma}} \| y_\star \|.
\end{align*}
Now we multiply both sides with $\sqrt{\frac{8\sum_{i=1}^n\|A_i\|}{K\gamma}}$, use 
\[
\sqrt{D_\star} = \sqrt{\|x_\star - x_0\|^2 + \| y_0 - y_\star \|^2} \leq \| x_0 - x_\star \| + \| y_0 - y_\star \|,
\]
and use $1\leq\frac{1}{\sqrt{1-\gamma}}\leq \frac{1}{1-\gamma}$ (by $\gamma \in (0, 1)$) to obtain the feasibility bound, which is 
\begin{align}
\mathbb{E}\dist(Ax^K, C) &\leq \frac{4\sum_{i=1}^n \|A_i\|}{K\gamma\sqrt{1-\gamma}} \left( \| x_\star - x_0 \| + \| y_\star - y_0\| + \| y_0 \| + \|y_\star \| \right) + \frac{4\sum_{i=1}^n \|A_i\|}{K\gamma} \| y_\star \| \notag \\
&\leq \frac{8\sum_{i=1}^n \|A_i\|}{K\gamma(1-\gamma)} \left( \| x_\star - x_0 \| + \| y_\star - y_0\| + \| y_0\| + \|y_\star \| \right),\label{eq: pq1}
\end{align}
giving \eqref{eq: ta2}. 

To obtain~\eqref{eq: ta1}, we combine the lower bound from~\eqref{eq:wj3} with the upper bound in~\eqref{eq:me3} and use the upper bound for the feasibility in~\eqref{eq: pq1}.
\end{proof}

\subsubsection{General Convex-Concave Problems}\label{sec: sif3}
In this section, we focus on problems that are convex-concave but possibly not included in the special cases considered in \Cref{sec: lincons_erm}.
In this case, in contrast to standard PURE-CD, we need to make use of  ideas from~\cite{song2021variance}.
The main reason for this is the following.
In the previous section, since we needed to output only primal vector, we could work with the average of $\bar y_k$ in the analysis and then eliminate it,  significantly simplifying our treatment of the duality gap.
However, when we wish to output a dual vector, we need to switch $\bar y_k$ in the definition of duality gap to the computed vector $y_k$.
This creates a difficulty that can be handled using the techniques of~\cite{song2021variance}.

We also use ideas from~\cite{alacaoglu2019convergence,alacaoglu2020random} to obtain results in the expected gap.
It is not clear to us if these ideas can be used in the framework of~\cite{song2021variance} to strengthen the results given therein.
The main difference is that our algorithm and the algorithms analyzed  in~\cite{alacaoglu2019convergence,alacaoglu2020random} build on gradient descent whereas~\cite{song2021variance} builds on dual averaging.

In particular, we add deterministic initialization, and use step sizes from $\tau_k=\tau a_k$ and $\sigma=\sigma a_k$ as in~\cite{song2021variance} for special choices of $a_k$.
To avoid notational clashes, we use $\lambda_k$ in place of $a_k$ used in \cite{song2021variance}, and $\Lambda_k$ in place of $A_k$ used in \cite{song2021variance}.

Let $\lambda_0 = 1$, $\lambda_1 = \frac{1}{n-1}$, and define $\lambda_{k+1} = \min\left( 1, \frac{n}{n-1} \lambda_k \right)$ for $k \ge 1$. 
We also set $\tau = \frac{1}{n\max_i \|A_i\|}$, and for some $\gamma \in (0,1)$, we set $\sigma^{(i)} = \frac{\gamma}{\|A_i\|}$ and 
$\underline\sigma=\frac{\min_i\sigma^{(i)}}{n} = \frac{\gamma }{n\max_i \|A_i \|}$. 
We replace the first iteration  of \Cref{alg: purecd_dense} by 
\begin{equation}\label{eq: purecd_init}
\begin{aligned}
\bar x_1 = x_1=\prox_{\tau, g}(x_0 - \tau  A^\top y_0)\\
y_1 = \prox_{\underline\sigma, h^\ast}(y_0 + \underline\sigma  A x_1).
\end{aligned}
\end{equation}
The convergence result is as follows.
\begin{theorem}\label{th: rates_for_part1.2}
Let~\Cref{asmp: asmp1} hold and $n\geq 2$. Use \Cref{alg: purecd_dense} with the modification that the first iteration ($k=0$) is replaced by~\eqref{eq: purecd_init}, with the iterations $k\geq 1$ proceedings exactly as before. 
Let
\begin{align*}
    \Tau_k = \tau\lambda_k I, ~~~ \tau = \frac{1}{n\max_i \|A_i\|}, ~~~ \Sigma_k =\lambda_k \Sigma = \lambda_k\diag(\sigma^{(1)}, \dots, \sigma^{(n)}), ~~~ \sigma^{(i)} = \frac{\gamma}{\|A_i\|}, ~~~  \Theta_k=I, ~~~ p^{(i)}=\frac{1}{n},
\end{align*}
 with $\theta=1$, $\gamma < 1$, $\lambda_0 = 1$, $\lambda_1 = \frac{1}{n-1}$, and  $\lambda_{k+1} = \min\left( 1, \frac{n}{n-1} \lambda_k \right)$ for $k\geq 1$. 
 Define the following quantities
 \begin{equation*}
   \Lambda_K = \sum_{k=0}^{K-1} \lambda_k, \quad 
   x^K = \frac{1}{\Lambda_K} \sum_{k=0}^{K-1} \lambda_k \bar x_{k+1}, ~~~~y^K = \frac{n\lambda_{K-1} y_{K} + \sum_{k=1}^{K-2}(n\lambda_k - (n-1)\lambda_{k+1})y_{k+1}}{\Lambda_K}.
 \end{equation*} 
 Then, it follows that for $K_0$ satisfying $(n-1)\log(n-1)\leq K_0\leq 1+n\log(n-1)$ and any compact set $\mathcal{Z}=\mathcal{X}\times\mathcal{Y}$,
\begin{equation*}
\mathbb{E} \max_{(x, y)\in\mathcal{Z}}  \left[ L(x^K, y) - L(x, y^K) \right] \leq \begin{cases} \left(1+\frac{1}{n-1}\right)^{-K} \frac{6n\max_i\|A_i\|}{\gamma(1-\gamma)},  & \mbox{if  $K\leq K_0$}, \\  \frac{6n\max_i \|A_i\|D_{\mathcal{Z}}}{(K-K_0-1 + (n-1)^2/n)\gamma(1-\gamma)}  , & \mbox{otherwise,} \end{cases}
\end{equation*}
where $D_{\mathcal{Z}}$ is defined in \eqref{eq:DZ}.
Hence, the number of iterations $K$ to ensure  that $\mathbb{E}\Gap(x^K, y^K) \leq \varepsilon$ is $O\left((n+n\min\{\log n, \log\varepsilon^{-1}\} + \varepsilon^{-1} D_{\mathcal{Z}} n\max_i \|A_i\| )d \right)$ or $\tilde O\left( nd+nd\max_i\|A_i\|D_{\mathcal{Z}} \varepsilon^{-1}\right)$.
\end{theorem}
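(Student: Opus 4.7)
The plan is to combine the expected-max framework of \Cref{lem: if3} with the time-rescaled averaging device of~\cite{song2021variance}, which is what enables the dual output $y^K$ to be formed from the actual iterates $y_k$ rather than from the full-dimensional auxiliaries $\bar y_k$. Starting from \eqref{eq: purecd_one_iter} with $\Tau_k = \tau\lambda_k I$, $\Sigma_k = \lambda_k\Sigma$, $\theta=1$, $P=(1/n)I$, I multiply through by $\lambda_k$. The key design feature is that $\lambda_k\Tau_k^{-1}=\tau^{-1}I$ and $\lambda_k\Sigma_k^{-1}=\Sigma^{-1}$ become independent of $k$, so the Lyapunov quadratics telescope cleanly despite the time-varying step sizes. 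Because $p^{(i)}=1/n$ and $h^*$ is separable, the pointwise identity $L(x,\bar y_{k+1}) = nL(x,y_{k+1}) - (n-1)L(x,y_k) + \tilde{\mathcal{E}}_k^L(x)$ holds for $k\ge 1$, with $\mathbb{E}_k\tilde{\mathcal{E}}_k^L(x)=0$ and $\tilde{\mathcal{E}}_k^L(x)$ depending on $x$ only through $\langle Ax,\cdot\rangle$. Substituting this into $\sum_{k=1}^{K-1}\lambda_k L(x,\bar y_{k+1})$, shifting indices, and including the deterministic $k=0$ contribution $\lambda_0 L(x,\bar y_1)=L(x,y_1)$ yields a weighted combination of the $L(x,y_k)$ in which the weight on $y_1$ equals $1-(n-1)\lambda_1=0$ by the choice $\lambda_1=1/(n-1)$ (hence $y_1$ is absent from $y^K$), the weight on $y_k$ for $k=2,\dots,K-1$ equals $n\lambda_{k-1}-(n-1)\lambda_k\ge 0$, and the weight on $y_K$ equals $n\lambda_{K-1}$, summing to $\Lambda_K$. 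Concavity of $L(x,\cdot)$ together with convexity of $L(\cdot,y)$ then yields, pointwise, the key Jensen inequality $\Lambda_K[L(x^K,y)-L(x,y^K)] \le \sum_{k=0}^{K-1}\lambda_k[L(\bar x_{k+1},y)-L(x,\bar y_{k+1})] + \sum_{k=1}^{K-1}\lambda_k\tilde{\mathcal{E}}_k^L(x)$.

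The first sum on the right-hand side is handled by telescoping the $\lambda_k$-rescaled \eqref{eq: purecd_one_iter}. To reach the expected-max measure, I reuse the auxiliary quantity $\mathcal{E}_k(y)=\|y-\bar y_{k+1}\|^2_{\Sigma^{-1}}-n\|y-y_{k+1}\|^2_{\Sigma^{-1}}+(n-1)\|y-y_k\|^2_{\Sigma^{-1}}$ (with $\mathbb{E}_k\mathcal{E}_k(y)=0$) to rewrite $\|y-\bar y_{k+1}\|^2_{\Sigma^{-1}}$ as the telescoping pair $n\|y-y_{k+1}\|^2_{\Sigma^{-1}}-(n-1)\|y-y_k\|^2_{\Sigma^{-1}}$ plus $\mathcal{E}_k(y)$. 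The initialization \eqref{eq: purecd_init} contributes a standard deterministic PDHG bound that chains into the telescoping because $\underline\sigma^{-1}I\succeq n\Sigma^{-1}$ (matching the dual Lyapunov coefficient produced on $\|y-y_1\|^2$ by the $k\ge 1$ telescoping), with stability $\tau\underline\sigma\|A\|^2\le\gamma/n<1$ following from $\|A\|^2\le n\max_i\|A_i\|^2$. The cross-terms contribute $+2\tau\lambda_k^2\mathbb{E}_k\|A^\top P^{-1}(y_{k+1}-y_k)\|^2$ via the $\lambda_k$-scaled analogue of~\eqref{eq: 3_errors2}, which combines with the $-\tau\lambda_k^2\|A^\top P^{-1}(y_{k+1}-y_k)\|^2$ term in \eqref{eq: purecd_one_iter}; the step-size check $\tau\sigma^{(i)}\|A_i\|^2/p^{(i)}=\gamma\|A_i\|/\max_j\|A_j\|\le\gamma$ with $\lambda_k\le 1$ then produces a strict-decrease residual $-(1-\gamma)\|\bar y_{k+1}-y_k\|^2_{\Sigma^{-1}}$ that absorbs the expected-max contributions below.

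After taking $\max_{(x,y)\in\mathcal{Z}}$ and then $\mathbb{E}$, the right-hand side consists of: (i) the initial Lyapunov $\tau^{-1}\|x-x_0\|^2+\underline\sigma^{-1}\|y-y_0\|^2\le(1/\tau+1/\underline\sigma)D_{\mathcal{Z}}$, which is of order $n\max_i\|A_i\|D_{\mathcal{Z}}/\gamma$; (ii) the residual $-\mathbb{E}\max_y\sum_k\mathcal{E}_k(y)$ bounded as in \Cref{lem: if3} via \Cref{lem: exp_max_lemma} by a $\sum_k\mathbb{E}\|y_{k+1}-y_k\|^2_{\Sigma^{-1}}$ contribution; and (iii) the residual $\mathbb{E}\max_x\sum_{k\ge 1}\lambda_k\tilde{\mathcal{E}}_k^L(x)$, controlled analogously since $\tilde{\mathcal{E}}_k^L(x)$ is affine in $x$ with coefficient $v_k := \bar y_{k+1}-ny_{k+1}+(n-1)y_k$ of norm $\lesssim\|\bar y_{k+1}-y_k\|$. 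Both (ii) and (iii) are absorbed by the strict-decrease residual $(1-\gamma)\sum_k\mathbb{E}\|\bar y_{k+1}-y_k\|^2_{\Sigma^{-1}}$ from the cross-term analysis. Dividing by $2\Lambda_K$ produces a bound of order $\frac{n\max_i\|A_i\|D_{\mathcal{Z}}}{\Lambda_K\gamma(1-\gamma)}$; the two cases in the statement then follow from $\Lambda_K=(1+\tfrac{1}{n-1})^{K-1}$ throughout the geometric phase $K\le K_0$ (by summing the geometric progression $\lambda_k=\tfrac{1}{n-1}(\tfrac{n}{n-1})^{k-1}$) and $\Lambda_K=\Lambda_{K_0}+(K-K_0)$ thereafter. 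The main obstacle in the execution will be the joint expected-max control of the two martingale-type auxiliary terms $\sum_k\mathcal{E}_k(y)$ and $\sum_k\lambda_k\tilde{\mathcal{E}}_k^L(x)$ under the joint $(x,y)$-max, and verifying that the resulting $\mathbb{E}\sum_k\|y_{k+1}-y_k\|^2$-type residuals remain dominated by the negative Lyapunov residual despite the $\lambda_k^2$-factor mismatch and the exponential growth of $\lambda_k$ in the geometric phase.
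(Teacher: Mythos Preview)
Your overall architecture matches the paper's: time-rescaled steps $\Tau_k=\tau\lambda_k I$, $\Sigma_k=\lambda_k\Sigma$ so that $\lambda_k\Tau_k^{-1},\lambda_k\Sigma_k^{-1}$ are $k$-independent, the $\bar y\to y$ conversion plus index shift to build $y^K$, and the expected-max device of \Cref{lem: exp_max_lemma} for the $y$-quadratic residual. However, two steps in your plan do not go through as written.

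\textbf{The $x$-side error.} You invoke the $\lambda_k$-analogue of~\eqref{eq: 3_errors2} to turn the two cross terms of~\eqref{eq: purecd_one_iter} into the scalar $2\tau n^2\lambda_k^2\|A^\top(y_{k+1}-y_k)\|^2$. But~\eqref{eq: 3_errors2} is an $\mathbb{E}_k$-identity that uses $\mathbb{E}_k[P^{-1}(y_{k+1}-y_k)]=\bar y_{k+1}-y_k$ with $x$ held \emph{deterministic}; once you take $\max_x$ first, this is no longer valid. The correct move is the pointwise identity: substituting $x-x_{k+1}=(x-\bar x_{k+1})+\tau\lambda_k nA^\top(y_{k+1}-y_k)$ into the first cross term gives, pointwise,
\[
\text{(cross terms)}=2\lambda_k\big\langle x-\bar x_{k+1},\,A^\top\big(ny_{k+1}-(n-1)y_k-\bar y_{k+1}\big)\big\rangle+2\tau n^2\lambda_k^2\|A^\top(y_{k+1}-y_k)\|^2.
\]
The extra inner product is precisely a second $x$-error (the paper's $\mathcal{E}_{k,2}(x)$) that your proposal omits. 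The key point---which makes the $x$-max trivial---is that the $\langle Ax,\cdot\rangle$-part of this residual is $2\lambda_k\langle Ax,\,ny_{k+1}-(n-1)y_k-\bar y_{k+1}\rangle$, exactly the negative of the $\langle Ax,\cdot\rangle$-part of $2\lambda_k\tilde{\mathcal{E}}_k^L(x)$. Hence the combined $x$-error has \emph{no} $x$-dependence; what remains is $x$-free and mean-zero under $\mathbb{E}_k$, so its $\mathbb{E}\max_x$ is simply $0$. Your planned application of \Cref{lem: exp_max_lemma} to $\tilde{\mathcal{E}}_k^L(x)$ alone is therefore both incomplete (it misses $\mathcal{E}_{k,2}$) and unnecessary.

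\textbf{The ``absorption'' claim.} The strict-decrease residual you produce is $-(1-\gamma)\sum_k\mathbb{E}\|\bar y_{k+1}-y_k\|^2_{\Sigma^{-1}}$, while \Cref{lem: exp_max_lemma} applied to $\sum_k\mathcal{E}_k(y)$ yields $+\sum_k n\,\mathbb{E}\|y_{k+1}-y_k\|^2_{\Sigma^{-1}}=+\sum_k\mathbb{E}\|\bar y_{k+1}-y_k\|^2_{\Sigma^{-1}}$. The net sign is $+\gamma>0$, so there is no absorption. The paper closes this by a separate argument: it specializes the telescoped inequality~\eqref{eq: end_of_std} to $(x,y)=(x_\star,y_\star)$, where all three error terms are mean-zero, and reads off
\[
\sum_{k=1}^{K-1} n\,\mathbb{E}\|y_{k+1}-y_k\|^2_{\Sigma^{-1}}\ \le\ \frac{n\max_i\|A_i\|}{1-\gamma}\Big(\|x_\star-x_1\|^2+\tfrac1\gamma\|y_\star-y_1\|^2\Big)\ \le\ \frac{2n\max_i\|A_i\|}{\gamma(1-\gamma)}\,D_\star,
\]
the last step using the initialization estimates of \Cref{lem: lem_init}. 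This $D_\star$-term, not the negative residual, is what controls the $y$-side expected-max error; combined with the $D_{\mathcal{Z}}$-terms from the Lyapunov it gives the stated $\frac{6n\max_i\|A_i\|}{\Lambda_K\gamma(1-\gamma)}D_{\mathcal{Z}}$ bound.
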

\begin{remark}
The result in this theorem corresponds to the entry for PURE-CD in the third column of Table~\ref{tb: 1}.
It also implies the result in the last column of Table~\ref{tb: 1}.
\end{remark}
\begin{remark}
This theorem shows that the result in~\cite{song2021variance} which was for $\max_{(x,y)} \mathbb{E} G(x^K, y^K, x,y)$ can be strengthened to expected duality gap $ \mathbb{E}\max_{(x,y)\in \mathcal{Z}} G(x^K, y^K, x,y)$ for PURE-CD and that the initialization technique introduced in~\cite{song2021variance} is remarkably general. A byproduct of the PURE-CD analysis is that the step size rule is more flexible, that is, the product of two step sizes can be twice as large as for VRPDA in~\cite{song2021variance}. It also shows that dual averaging is not essential for using this technique and obtaining improved complexity bounds.
\end{remark}

\begin{proof}[Proof of~\Cref{th: rates_for_part1.2}]
In~\eqref{eq: purecd_one_iter} (note that this result holds for $k\geq 1$ for the algorithm described in this theorem), we use $\Tau_k = \tau \lambda_k I$, $\Sigma_k = \lambda_k \Sigma$, $\theta= 1$, and $P^{-1} = n I$, and multiply both sides of the resulting inequality with $\lambda_k$ to deduce for $k\geq 1$
\begin{equation}\label{eq: gen_cvx_ccv_eq1}
\begin{aligned}
& 2\lambda_k(L(\bar x_{k+1}, y) - L(x, \bar y_{k+1})) + \frac{1}{\tau} \| x-x_{k+1} \|^2 + \| y- \bar y_{k+1}\|^2_{\Sigma^{-1}} \\
& \leq \frac{1}{\tau} \| x-x_k\|^2+\| y-y_k\|^2_{\Sigma^{-1}} - \| \bar y_{k+1} - y_k\|^2_{\Sigma^{-1}} \\ 
& \quad - \tau\lambda_k^2n^2 \| A^\top (y_{k+1} - y_k)\|^2 + 2n\lambda_k \langle x-x_{k+1}, A^\top (y_{k+1} - y_k) \rangle
+2\lambda_k \langle x-\bar x_{k+1}, A^\top(y_k - \bar y_{k+1}) \rangle.
\end{aligned}
\end{equation}
To handle the coupling of expectation and supremum, we introduce three error terms:
\begin{subequations}
\begin{align}
\mathcal{E}_{k, 1}(y) &= - \| y - \bar y_{k+1} \|^2_{\Sigma^{-1}} + n\| y-y_{k+1} \|^2_{\Sigma^{-1}} + (1-n) \| y-y_k\|^2_{\Sigma^{-1}},\label{eq: err_term1}\\
\mathcal{E}_{k, 2}(x) &= 2\lambda_k\langle x-\bar x_{k+1}, A^\top(y_k - \bar y_{k+1}) \rangle - 2n\lambda_k \langle x-\bar x_{k+1}, A^\top(y_k - y_{k+1}) \rangle \notag \\
&= 2\lambda_k\langle x-\bar x_{k+1}, A^\top(y_k - \bar y_{k+1}) - n A^\top(y_k - y_{k+1}) \rangle. \label{eq: err_term2} \\
\mathcal{E}_{k, 3}(x) &= \lambda_k\left(- \langle x, A^\top \bar y_{k+1} \rangle +  \langle x, A^\top(ny_{k+1} - (n-1) y_k) \rangle + h^\ast(\bar y_{k+1}) - nh^\ast(y_{k+1}) + (n-1) h^\ast(y_k)\right).\label{eq: err_term3}
\end{align}
\end{subequations}
We first note that for deterministic $x$ and $y$, all these terms would be zero-mean
However, we have to bound their expectations after taking supremum over random variables $x, y$ by using~\Cref{lem: exp_max_lemma}.

In~\eqref{eq: gen_cvx_ccv_eq1}, we use the definitions in~\eqref{eq: err_term1},~\eqref{eq: err_term2},~\eqref{eq: g_from_l} by adding and subtracting certain terms, to write
\begin{equation}\label{eq: cvx_ccv_gap_eq5}
\begin{aligned}
& 2 \lambda_k G(\bar x_{k+1}, \bar y_{k+1}, x, y) + \frac{1}{\tau}\| x-x_{k+1} \|^2 + n\| y-y_{k+1}\|^2_{\Sigma^{-1}} \\
& \leq \frac{1}{\tau}\| x-x_k\|^2 + n\| y-y_k \|^2_{\Sigma^{-1}} 
 - \| \bar y_{k+1} - y_k\|^2_{\Sigma^{-1}}  +  \mathcal{E}_{k, 1}(y)+\mathcal{E}_{k, 2}(x) \\
& \quad - \tau \lambda_k^2 n^2 \| A^\top(y_{k+1} - y_k) \|^2 +2n\lambda_k\langle x-x_{k+1}, A^\top(y_{k+1} - y_k) \rangle + 2n\lambda_k \langle x-\bar x_{k+1}, A^\top(y_k - y_{k+1}) \rangle.
\end{aligned}
\end{equation}
Since $\bar x_{k+1} - x_{k+1} = \tau \lambda_k n A^\top(y_{k+1} -y_k)$ from step~\ref{st: step_extrap_dense}, for $k\geq 1$, we have in similar fashion to ~\eqref{eq: 3_errors2} that
\begin{align*}
    2n\lambda_k\langle x-x_{k+1}, A^\top(y_{k+1} - y_k) \rangle + 2n\lambda_k \langle x-\bar x_{k+1}, A^\top(y_k - y_{k+1}) \rangle & = 2n\lambda_k\langle \bar x_{k+1}-x_{k+1}, A^\top(y_{k+1} - y_k) \rangle\\
    & =2\tau n^2\lambda_k^2 \|A^\top(y_{k+1} - y_k)\|^2,
\end{align*}
which can be combined with the third last term in the RHS of~\eqref{eq: cvx_ccv_gap_eq5}.
We then sum the resulting inequality for $k=1, 2, \dots, K-1$ to derive
\begin{equation}\label{eq: end_of_std}
\begin{aligned}
& 2\sum_{k=1}^{K-1} \lambda_k G(\bar x_{k+1}, \bar y_{k+1}, x, y) + \frac{1}{\tau}\| x-x_{K+1} \|^2 + n\| y-y_{K+1}\|^2_{\Sigma^{-1}} \\
& \leq \frac{1}{\tau}\| x-x_1\|^2 + n\| y-y_1 \|^2_{\Sigma^{-1}} \\
& \quad +\sum_{k=1}^{K-1} \left( - \| \bar y_{k+1} - y_k\|^2_{\Sigma^{-1}} + \tau n^2 \lambda_k^2 \| A^\top (y_{k+1} - y_k) \|^2 \right) + \sum_{k=1}^{K-1} \left(\mathcal{E}_{k, 1}(y)+\mathcal{E}_{k, 2}(x)\right).
\end{aligned}
\end{equation}
Next, we lower-bound the term involving $G$ on the LHS of this expression by~\Cref{lem: lemma_swd21_trick}, which uses the technique of~\cite{song2021variance} to derive
\begin{multline*}
\sum_{k=1}^{K-1} \lambda_k G(\bar x_{k+1}, \bar y_{k+1}, x, y)  \geq \Lambda_K G(x^K, y^K, x, y) +\sum_{k=1}^{K-1} \mathcal{E}_{k, 3}(x) \\
+ \frac{n\max_i\|A_i\|}{2 \gamma } \Big(\| y-y_1\|^2 - \| y-y_0 \|^2 \Big)
 + \frac{n\max_i\|A_i\|}{2}  \| x-x_1\|^2 -n\max_i\|A_i\| \| x-x_0\|^2.
\end{multline*}
We use this estimate on LHS of~\eqref{eq: end_of_std} and drop nonnegative terms on the LHS to obtain
\begin{align*}
    & 2\Lambda_K G(x^K, y^K, x, y) \\
    & \leq 
    \frac{1}{\tau} \| x-x_1\|^2 + n\| y-y_1 \|^2_{\Sigma^{-1}} \\
    & \quad - \frac{n\max_i\|A_i\|}{ \gamma } \Big(\| y-y_1\|^2 - \| y-y_0 \|^2 \Big)\\
 & \quad - n\max_i\|A_i\|  \| x-x_1\|^2 + 2n\max_i\|A_i\| \| x-x_0\|^2
+\sum_{k=1}^{K-1} \left[ - \| \bar y_{k+1} - y_k\|^2_{\Sigma^{-1}} + \tau n^2 \lambda_k^2 \| A^\top (y_{k+1} - y_k) \|^2 \right] \\
& \quad + \sum_{k=1}^{K-1}\left( \mathcal{E}_{k, 1}(y)+\mathcal{E}_{k, 2}(x)-2\mathcal{E}_{k, 3}(x)\right),
\end{align*}
where $\mathcal{E}_{k, 3}$ is defined in~\eqref{eq: err_term3}.
By taking the maximum over $z=(x, y) \in \mathcal{Z}$ and the expectation, and using the definitions $\tau=\frac{1}{n\max_i\|A_i\|}$ and $\sigma^{(i)}=\frac{\gamma}{\|A_i\|}$ 
to cancel the terms $\|x-x_1\|^2 + \|y-y_1\|^2$, we obtain
\begin{equation}\label{eq: end_of_std_2}
\begin{aligned}
2\mathbb{E}\max_{(x,y)\in\mathcal{Z}}\Lambda_K G(x^{K}, y^{K}, x, y) & \leq \frac{3 n \max_i \|A_i\|}{\gamma} \max_{(x,y)\in\mathcal{Z}}\left( \| x-x_0\|^2 + \|y-y_0\|^2\right) \\
& \quad +\mathbb{E}\max_{(x,y)\in\mathcal{Z}} \sum_{k=1}^{K-1} \left(\mathcal{E}_{k, 1}(y)+\mathcal{E}_{k, 2}(x)-2\mathcal{E}_{k, 3}(x)\right) \\
& \quad +\sum_{k=1}^{K-1} \mathbb{E}\left[ - \| \bar y_{k+1} - y_k\|^2_{\Sigma^{-1}} + \tau n^2 \lambda_k^2 \| A^\top (y_{k+1} - y_k) \|^2 \right].
\end{aligned}
\end{equation}
By $\lambda_k\leq 1$, the tower property, \eqref{eq: purecd_error_sparsity}, \eqref{eq: lem1_error_terms_cancel}, and the step size rules, each term in the final summation on the RHS of~\eqref{eq: end_of_std_2} is nonpositive, as the following argument shows.
Since $y_{k+1} - y_k$ is one-sparse for $k\geq 1$, we can derive as~\cref{eq: lem1_error_terms_cancel,eq: purecd_error_sparsity} by using $p^{(i)}=\frac{1}{n}$ and the step size rule $\tau \sigma^{(i)}(p^{(i)})^{-1} \| A_i\|^2 \leq \gamma < 1$,
\begin{align}
    \mathbb{E}\left[ - \| \bar y_{k+1} - y_k\|^2_{\Sigma^{-1}} + \tau n^2 \lambda_k^2 \| A^\top (y_{k+1} - y_k) \|^2 \right] &= \mathbb{E}\left[ - \| \bar y_{k+1} - y_k\|^2_{\Sigma^{-1}} + \tau n^2 \lambda_k^2 \mathbb{E}_k\| A^\top (y_{k+1} - y_k) \|^2 \right] \notag \\
    &\leq \mathbb{E}\left[ - \| \bar y_{k+1} - y_k\|^2_{\Sigma^{-1}} + \tau n^2  \mathbb{E}_k\| A^\top (y_{k+1} - y_k) \|^2 \right] \notag\\
    &\leq -(1-\gamma)n \mathbb{E}\left[ \mathbb{E}_k\|y_{k+1} - y_k\|^2_{\Sigma^{-1}}\right] \leq 0, \label{eq: rs3}
\end{align}
as claimed.
Thus, from \eqref{eq: end_of_std_2}, and using the definition of $D_{\mathcal{Z}}$ from \Cref{sec: notation}, we obtain
\begin{align}\label{eq: gap_bd_fin1}
\mathbb{E}\max_{(x,y)\in\mathcal{Z}} G(x^K, y^K, x, y) \leq  \frac{2n\max_i\|A_i\|}{\gamma\Lambda_K} D_{\mathcal{Z}}  + \frac{1}{2\Lambda_K} \mathbb{E}\max_{x, y} \sum_{k=1}^{K-1} \left(\mathcal{E}_{k, 1}(y)+\mathcal{E}_{k, 2}(x) - 2\mathcal{E}_{k, 3}(x)\right).
\end{align}

We need to estimate the last three error terms.
Note first that by the definitions on~\eqref{eq: err_term2},~\eqref{eq: err_term3}
\begin{align}
\mathbb{E}\max_{x, y}\sum_{k=1}^{K-1} \left(\mathcal{E}_{k, 2}(x) - 2\mathcal{E}_{k,3}(x)\right) &= -2\mathbb{E} \sum_{k=1}^{K-1} \lambda_k \langle \bar x_{k+1}, A^\top(y_k - \bar y_{k+1}) - nA^\top(y_k - y_{k+1})  \rangle \notag \\
&\qquad -2 \mathbb{E} \sum_{k=1}^{K-1} \lambda_k \left[ h^\ast(\bar y_{k+1}) - n h^\ast(y_{k+1}) +(n-1) h^\ast(y_k) \right] \notag \\
&= -2 \sum_{k=1}^{K-1} \lambda_k \mathbb{E}\left[\mathbb{E}_k\langle \bar x_{k+1}, A^\top(y_k - \bar y_{k+1}) - nA^\top(y_k - y_{k+1})  \rangle\right] \notag \\
&\qquad -2 \sum_{k=1}^{K-1} \lambda_k \mathbb{E}\left[ \mathbb{E}_k\left[ h^\ast(\bar y_{k+1}) - n h^\ast(y_{k+1}) +(n-1) h^\ast(y_k) \right]\right], \label{eq: sd3} \\
&=0, \notag
\end{align}
where we used the tower property, $\bar x_{k+1}$ being deterministic under $\mathbb{E}_k$, the identity $A^\top(y_k - \bar y_{k+1}) - \mathbb{E}_k\left[nA^\top(y_k - y_{k+1})\right] = 0$ to show that the first term on the RHS of~\eqref{eq: sd3} is $0$ and the third result in~\Cref{lem: tech_lemma_one_iter} to show that the second term in the RHS of~\eqref{eq: sd3} is $0$, since $\E_k h^*(y_{k+1}) = \frac{1}{n} h^*(\bar y_{k+1}) + (1-\frac{1}{n}) h^*(y_k)$.

It remains to estimate $\mathbb{E}\max_{x, y}\sum_{k=1}^{K-1} \mathcal{E}_{k, 1}(y)$.
Starting from~\eqref{eq: err_term1}, we expand the squared norms to get (similar to~\eqref{eq: sk3}) that
\[
    \mathcal{E}_{k, 1}(y) = 2\langle y, \bar y_{k+1} - n y_{k+1} - (1-n) y_k \rangle_{\Sigma^{-1}} - \|\bar y_{k+1} \|^2_{\Sigma^{-1}} +n\|y_{k+1} \|^2_{\Sigma^{-1}} + (1-n) \| y_k\|^2_{\Sigma^{-1}}.
\]
Similar to~\eqref{eq: expmax_term1}, we obtain
\begin{align*}
& \mathbb{E} \max_{y\in\mathcal{Y}} \left[\sum_{k=1}^{K-1} \mathcal{E}_{k, 1}(y) - n\| y-y_1 \|^2_{\Sigma^{-1}}\right] \\
& \leq \mathbb{E} \max_{y\in\mathcal{Y}} \left[ \sum_{k=1}^{K-1} 2 \langle y, (\bar y_{k+1} - y_k) - n(y_{k+1} - y_k) \rangle_{\Sigma^{-1}} - n\| y-y_1 \|^2_{\Sigma^{-1}} \right] \\
& \quad + \mathbb{E} \left[ \sum_{k=1}^{K-1} - \|\bar y_{k+1} \|^2_{\Sigma^{-1}} +n\|y_{k+1} \|^2_{\Sigma^{-1}} + (1-n) \| y_k\|^2_{\Sigma^{-1}} \right] \\
& =\mathbb{E} \max_{y\in\mathcal{Y}} \left[ \sum_{k=1}^{K-1} 2 \langle y, (\bar y_{k+1} - y_k) - n(y_{k+1} - y_k) \rangle_{\Sigma^{-1}} - n\| y-y_1 \|^2_{\Sigma^{-1}} \right] \\
& \leq  \sum_{k=1}^{K-1} n\mathbb{E} \| y_k-y_{k+1} \|^2_{\Sigma^{-1}},
\end{align*}
where the equality is due to $\mathbb{E}_k \|y_{k+1}\|^2_{\Sigma^{-1}} = \frac{1}{n} \|\bar y_{k+1}\|^2_{\Sigma^{-1}} + \left( 1 - \frac{1}{n} \right) \| y_k\|^2_{\Sigma^{-1}}$ (\Cref{lem: tech_lemma_one_iter}) and the last inequality is the application of \Cref{lem: exp_max_lemma} with $P=1/n$, $u_0 = y_0$, $\underline k = 1$, and $\mathcal{U}=\mathcal{Y}$.
It follows from this bound that 
\begin{align}
\nonumber
\mathbb{E} \max_{y\in\mathcal{Y}} \sum_{k=1}^{K-1} \mathcal{E}_{k,1}(y) 
& \leq \max_{y\in\mathcal{Y}} n \| y-y_1\|^2_{\Sigma^{-1}} + \mathbb{E} \max_{y\in\mathcal{Y}} \left[\sum_{k=1}^{K-1} \mathcal{E}_{k, 1}(y) - n\| y-y_1 \|^2_{\Sigma^{-1}}\right]  \\
&\leq \max_{y\in\mathcal{Y}} n\| y-y_1\|^2_{\Sigma^{-1}} + \sum_{k=1}^{K-1} n\mathbb{E} \| y_k-y_{k+1} \|^2_{\Sigma^{-1}} \notag\\
&\leq \max_{y\in\mathcal{Y}} \frac{n\max_i \|A_i\|}{\gamma} \| y-y_1\|^2+ \sum_{k=1}^{K-1} n\mathbb{E} \| y_k-y_{k+1} \|^2_{\Sigma^{-1}} \notag\\
&\leq  \max_{y\in\mathcal{Y}} \frac{2n\max_i \|A_i\|}{\gamma} \| y-y_0\|^2 + \frac{2n\max_i \|A_i\|}{\gamma} \| y_1-y_0\|^2 +  \sum_{k=1}^{K-1} n\mathbb{E} \| y_k-y_{k+1} \|^2_{\Sigma^{-1}} \notag\\
&\leq \frac{2n\max_i\|A_i\|}{\gamma} D_{\mathcal{Z}} + \frac{4n\max_i\|A_i\|}{\gamma(1-\gamma)} D_\star + \sum_{k=1}^{K-1} n \mathbb{E} \| y_k-y_{k+1} \|^2_{\Sigma^{-1}},\label{eq: exp_max_gap_eq1}
\end{align}
where the last step is by the definition of $D_{\mathcal{Z}}$ from \Cref{sec: notation}, $\sigma_i^{-1} = \|A_i\|/\gamma \leq \max_i \|A_i\|/\gamma$ and the third result in Lemma~\ref{lem: lem_init}.
To finish upper bounding this term, we need to estimate $\sum_{k=1}^{K-1} n \mathbb{E} \| y_k-y_{k+1} \|^2_{\Sigma^{-1}}$.

For this task, we proceed as in the derivation of the second result in~\Cref{lem: lem1_purecd}. 
By letting $(x, y) = (x_\star, y_\star)$ and taking expectation of~\eqref{eq: end_of_std}, we have
\begin{multline}\label{eq: end_of_std2}
2\mathbb{E}\sum_{k=1}^{K-1} \lambda_k G(\bar x_{k+1}, \bar y_{k+1}, x_\star, y_\star) \leq \frac{1}{\tau}\| x_\star-x_1\|^2 + n\| y_\star-y_1 \|^2_{\Sigma^{-1}} \\
+\mathbb{E}\sum_{k=1}^{K-1} \left( - \| \bar y_{k+1} - y_k\|^2_{\Sigma^{-1}} + \tau n^2 \lambda_k^2 \| A^\top (y_{k+1} - y_k) \|^2 \right) + \mathbb{E}\sum_{k=1}^{K-1} \left(\mathcal{E}_{k, 1}(y_\star)+\mathcal{E}_{k, 2}(x_\star)\right).
\end{multline}
Now we can use $G(\bar x_{k+1}, \bar y_{k+1}, x_\star, y_\star) \geq 0$ from the definition of $G$ from~\eqref{eq: g_from_l}.
Moreover, from \eqref{eq: err_term1} and \eqref{eq: err_term2}, we have $\mathbb{E}[\mathcal{E}_{k, 1}(y_\star)]=0$ and $\mathbb{E}[\mathcal{E}_{k, 2}(x_\star)]=0$.
The former claim follows from the second result of Lemma~\ref{lem: tech_lemma_one_iter}. For the latter, we use the fact that for any fixed $x$ (in this case, $x_\star$), under the conditioning of $\mathbb{E}_k$, $\bar x_{k+1}$ is deterministic. Thus, we have
\begin{align*}
    \mathbb{E}\left[ \mathcal{E}_{k, 2}(x_\star) \right] &= \mathbb{E}\left[ \mathbb{E}_k [\mathcal{E}_{k, 2}(x_\star)] \right] = 2\lambda_k\mathbb{E}\left[ \mathbb{E}_k \langle x_\star - \bar x_{k+1}, A^\top(y_k - \bar y_{k+1}) - n A^\top(y_k - y_{k+1}) \rangle \right] \\
    &=2\lambda_k\mathbb{E}\left[ \langle x_\star - \bar x_{k+1}, A^\top(y_k - \bar y_{k+1}) - \mathbb{E}_k[n A^\top (y_k - y_{k+1})] \rangle \right] = 0,
\end{align*}
since $\mathbb{E}_k[n(y_k - y_{k+1})] = y_k - \bar y_{k+1}$.

Finally, we use the derivations in~\cref{eq: lem1_error_terms_cancel,eq: purecd_error_sparsity} for $k\geq 1$, since $y_{k+1} - y_k$ is one-sparse, $\lambda_k\leq1$ and tower property, for the first summation in the last line of~\eqref{eq: end_of_std2} to get for $k\geq 1$ that (see also~\eqref{eq: rs3})
\begin{align*}
    - \| \bar y_{k+1} - y_k\|^2_{\Sigma^{-1}} + \tau n^2 \lambda_k^2 \mathbb{E}_k \| A^\top (y_{k+1} - y_k) \|^2 & \leq - \| \bar y_{k+1} - y_k\|^2_{\Sigma^{-1}} + \tau n^2 \mathbb{E}_k\| A^\top (y_{k+1} - y_k) \|^2 \\
    & \leq - (1-\gamma) \mathbb{E}_k n\|y_{k+1} - y_k\|^2_{\Sigma^{-1}}.
\end{align*}
By using these estimations in~\eqref{eq: end_of_std2}, we have
\begin{equation*}
 \sum_{k=1}^{K-1} (1-\gamma) \mathbb{E}_k n\|y_{k+1} - y_k\|^2_{\Sigma^{-1}} \leq \frac{1}{\tau}\| x_\star-x_1\|^2 + n\| y_\star-y_1 \|^2_{\Sigma^{-1}}  = n\max_i\|A_i\|\left( \|x_\star - x_1\|^2 + \frac{1}{\gamma}\| y_\star - y_1\|^2 \right).
\end{equation*}
By using the step size choices in this theorem, we  have from the second result in~\Cref{lem: lem_init} that
\begin{equation*}
    \sum_{k=1}^{K-1} n\mathbb{E} \| y_{k+1} -y_k \|^2_{\Sigma^{-1}} \leq \frac{n\max_i \|A_i\|\left( \| x_\star - x_1 \|^2 + \gamma^{-1}\| y_\star - y_1 \|^2 \right)}{1-\gamma} \leq \frac{2n\max_i\|A_i\|}{\gamma(1-\gamma)} D_\star.
\end{equation*}
With this bound,~\eqref{eq: exp_max_gap_eq1} becomes
\begin{equation}
\mathbb{E} \max_{y\in\mathcal{Y}} \sum_{k=1}^{K-1} \mathcal{E}_{k,1}(y) 
\leq  \frac{2n\max_i\|A_i\|}{\gamma} D_{\mathcal{Z}} + \frac{6n\max_i\|A_i\|}{\gamma(1-\gamma)} D_{\star}.
\end{equation}
By combining the results for the last summation in~\eqref{eq: gap_bd_fin1}, we obtain
\begin{equation}\label{eq:sh7}
    \mathbb{E}\max_{(x,y)\in\mathcal{Z}} G(x^K, y^K, x, y) \leq \frac{1}{\Lambda_K}\left( \frac{3n\max_i\|A_i\|}{\gamma}D_{\mathcal{Z}} + \frac{3n\max_i\|A_i\|}{\gamma(1-\gamma)}D_\star \right) \leq \frac{6n\max_i\|A_i\|}{\Lambda_K\gamma(1-\gamma)}D_{\mathcal{Z}}.
\end{equation}
We have to lower-bound $\Lambda_K=\sum_{k=0}^{K-1} \lambda_k$, where $\lambda_0=1$, $\lambda_1=(n-1)^{-1}$ and $\lambda_{k+1} = \min\left( 1, \frac{n}{n-1} \lambda_k \right)$ for $k\geq1$.
For the sake of being self-contained, we provide a brief proof here using the same arguments as in~\cite{song2021variance}.
Here, $\lambda_k$ for $k\geq 1$, increases geometrically with the factor $\left( 1+ \frac{1}{n-1} \right)$ until it reaches $1$. We have for $k\geq 1$ that $\lambda_{k} = \frac{1}{n-1}\left(1+\frac{1}{n-1}\right)^{k-1}= 1$ when $k-1= \log(n-1)/\left(\log n - \log(n-1)\right)$. Thus for  $k > K_0=\lfloor1+\log(n-1)/\left(\log n - \log(n-1)\right)\rfloor$, we have $\lambda_k = 1$.
By summing the geometric series, we have
\[
\Lambda_{K_0}  =1+\sum_{k=1}^{K_0} \frac{1}{n-1} \left( 1+ \frac{1}{n-1}\right)^{k-1}  
= 1+\frac{1}{n-1}\frac{1-\left(1+\frac{1}{n-1} \right)^{K_0}}{-\frac{1}{n-1}} = \left(1+\frac{1}{n-1} \right)^{K_0} \geq \frac{(n-1)^2}{n}
\]
since, by the definition of $K_0$, $\frac{1}{n-1}\left(1+\frac{1}{n-1} \right)^{K_0+1}\geq 1$. 
Since $\lambda_k = 1$ for $k>K_0$, we have for any $K>K_0$ that
\[
\Lambda_K =\Lambda_{K_0} + \sum_{k=K_0+1}^{K-1} \lambda_k \geq \frac{(n-1)^2}{n} + K-K_0-1.
\]
One part of the complexity result follows by substituting this lower bound into \eqref{eq:sh7}.
For the other part of the result (the case of $K \le K_0$), we note from the argument above that  $\Lambda_{K} = \left(1+\frac{1}{n-1}\right)^{K}$ for $K \le K_0$, completing the proof of complexity. 
For the upper and lower bounds of $K_0$, we use the definition of $K_0$ and the inequality $\frac{1}{n}\leq \log\left( 1+\frac{1}{n-1} \right)\leq \frac{1}{n-1}$.
\end{proof}

\subsection{Convergence Analysis with Sparse $A$ (\Cref{alg: purecd_sparse})}\label{sec: cvx_ccv_sparse}
We proceed to analyze a single iteration of~\Cref{alg: purecd_sparse}.
Similar to~\Cref{lem: lem1_purecd}, our results use the arguments from~\cite{alacaoglu2020random} but are simplified because our setting does not have an additional smooth nonlinear term in the objective and because we assume uniform sampling for~\Cref{alg: purecd_sparse}.
Our analysis accounts for possible strong convexity in $g$ and  $h^\ast$, so that we can leverage the results in later sections.

With sparsity, for keeping the computational cost cheap, we  assume separability of $g$ in addition to $h$.
The next remark explains the notion of complexity that we use for sparse $A$.
\begin{remark}\label{eq: sparse_cost_rem}
In the results with sparse data, we use the notion of \emph{expected complexity}.
This is due to the per-iteration cost being random since it depends on the selected row at each iteration.
We compute the complexity by using the \emph{expected cost} of each iteration.
Since each row is selected with the same probability $1/n$, the expected cost per iteration is $\sum_{i=1}^n \frac{1}{n} \nnz(A_i) = \frac{1}{n} \nnz(A)$.
\end{remark}

\begin{assumption}\label{asmp: asmp2}
The function $g$ in \eqref{eq:prob} is separable, that is, $g(x) = \sum_{j=1}^d g_j(x^{(j)})$.
\end{assumption}

The following lemma analyzes a single  iteration of \Cref{alg: purecd_sparse}.
\begin{lemma}\label{lem: sparse_one_it}
Let~\Cref{asmp: asmp1} and \Cref{asmp: asmp2} hold, and suppose that $g$ is $\mu_g \geq 0$ strongly convex and $h^\ast$ is $\mu_h\geq 0$ strongly convex. 
In \Cref{alg: purecd_sparse}, we set
\begin{equation*}
    \Theta_k = n\Pi (I+\mu_g\Tau_k)^{-1} \iff \theta_k^{(j)} = \frac{n\pi^{(j)}}{1+\mu_g\tau_k^{(j)}}.
\end{equation*}
Then we have for any deterministic $x, y$ that
\begin{align*}
& 2G(\bar x_{k+1}, \bar y_{k+1}, x, y)+ \mathbb{E}_k \| x-x_{k+1} \|^2_{(\Tau_k^{-1}+\mu_g)\Pi^{-1}} + \mathbb{E}_k\| y-  y_{k+1}\|^2_{(\Sigma_k^{-1} + \mu_h)n} \\
& \leq \| x-x_k\|^2_{(\Tau_k^{-1}+\mu_g)\Pi^{-1} - \mu_g}  
+\| y-y_k\|^2_{(\Sigma_k^{-1}+\mu_h)n-\mu_h} - \| \bar y_{k+1} - y_k\|^2_{\Sigma_k^{-1}} + \|\bar y_{k+1} - y_k\|^2_{M_y((\Tau_k^{-1}+\mu_g)\Pi^{-1})},
\end{align*}
where $M_y^{(i)}((\Tau_k^{-1}+\mu_g)\Pi^{-1}) = \sum_{j=1}^d \frac{1+\mu_g\tau_k^{(j)}}{n\pi^{(j)}} \tau_k^{(j)} (\theta_k^{(j)})^2 A_{i, j}^2$ and $\pi^{(j)} = \frac{|i\in I(j)|}{n}$.
\end{lemma}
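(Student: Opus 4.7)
My plan is to follow the template of Lemma~\ref{lem: lem1_purecd}: derive primal and dual prox-inequalities from \eqref{eq: prox_ineq} (now with the strong-convexity terms $\mu_g$ and $\mu_h$), combine them via the definition \eqref{eq: g_from_l} of $G$, and then transfer $\bar x_{k+1}\to x_{k+1}$ and $\bar y_{k+1}\to y_{k+1}$ by taking conditional expectation, thereby introducing the importance weights $\Pi^{-1}$ in the primal metric and the factor $n$ in the dual metric. Applying \eqref{eq: prox_ineq} with $\mu=\mu_g$ to $\bar x_{k+1}=\prox_{\Tau_k,g}(x_k-\Tau_k A^\top y_k)$ (using separability of $g$) and with $\mu=\mu_h$ to the dual prox \eqref{eq:def.ybar}, adding the two bounds and using $2(g(\bar x_{k+1})-g(x))+2(h^\ast(\bar y_{k+1})-h^\ast(y))=2G(\bar x_{k+1},\bar y_{k+1},x,y)-2\langle A\bar x_{k+1},y\rangle+2\langle Ax,\bar y_{k+1}\rangle$ to rewrite the function-value differences, and simplifying the four bilinear cross terms on the right-hand side yields
\begin{equation*}
2G + \|x-\bar x_{k+1}\|^2_{\Tau_k^{-1}+\mu_g I} + \|y-\bar y_{k+1}\|^2_{\Sigma_k^{-1}+\mu_h I} \le \|x-x_k\|^2_{\Tau_k^{-1}} + \|y-y_k\|^2_{\Sigma_k^{-1}} - \|\bar x_{k+1}-x_k\|^2_{\Tau_k^{-1}} - \|\bar y_{k+1}-y_k\|^2_{\Sigma_k^{-1}} + 2\langle x-\bar x_{k+1},A^\top(y_k-\bar y_{k+1})\rangle.
\end{equation*}

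The dual transfer uses Lemma~\ref{lem: tech_lemma_one_iter} with $\Phi=\Sigma_k^{-1}+\mu_h I$, giving $\mathbb{E}_k\|y-y_{k+1}\|^2_{n\Phi}=\|y-\bar y_{k+1}\|^2_{\Phi}+(n-1)\|y-y_k\|^2_{\Phi}$; combined with the $\|y-y_k\|^2_{\Sigma_k^{-1}}$ term already on the right-hand side, this produces the desired weight $(\Sigma_k^{-1}+\mu_h)n-\mu_h$ on $\|y-y_k\|^2$. The primal transfer is more intricate. The sparse update together with $y_{k+1}^{(i_k)}=\bar y_{k+1}^{(i_k)}$ gives $x_{k+1}^{(j)}-x_k^{(j)}=\mathbf 1_{j\in J(i_k)}\bigl[(\bar x_{k+1}^{(j)}-x_k^{(j)})-\tau_k^{(j)}\theta_k^{(j)}A_{i_k,j}(\bar y_{k+1}^{(i_k)}-y_k^{(i_k)})\bigr]$. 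Since $\Pr(j\in J(i_k))=\pi^{(j)}$ and, conditional on this event, $i_k$ is uniform on $I(j)$ with probability $1/(n\pi^{(j)})$, expanding $(x^{(j)}-x_{k+1}^{(j)})^2$, weighting by $\frac{1+\mu_g\tau_k^{(j)}}{\pi^{(j)}\tau_k^{(j)}}$, and summing over $j$ yields the identity
\begin{equation*}
\mathbb{E}_k\|x-x_{k+1}\|^2_{(\Tau_k^{-1}+\mu_g)\Pi^{-1}} = \|x-\bar x_{k+1}\|^2_{\Tau_k^{-1}+\mu_g I} + \|x-x_k\|^2_{(\Tau_k^{-1}+\mu_g)(\Pi^{-1}-I)} + \sum_{j=1}^d \tfrac{2(1+\mu_g\tau_k^{(j)})\theta_k^{(j)}}{n\pi^{(j)}}(x^{(j)}-\bar x_{k+1}^{(j)})(A^\top(\bar y_{k+1}-y_k))^{(j)} + \|\bar y_{k+1}-y_k\|^2_{M_y},
\end{equation*}
where the $M_y$ term comes from reindexing $\sum_{j}\sum_{i\in I(j)} \frac{(1+\mu_g\tau_k^{(j)})\tau_k^{(j)}(\theta_k^{(j)})^2}{n\pi^{(j)}}A_{i,j}^2(\bar y_{k+1}^{(i)}-y_k^{(i)})^2$ by summing over $i$ first.

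The main obstacle is disposing of the cross term in the primal identity. The prescribed choice $\theta_k^{(j)}=n\pi^{(j)}/(1+\mu_g\tau_k^{(j)})$ is exactly what makes the coefficient $\frac{(1+\mu_g\tau_k^{(j)})\theta_k^{(j)}}{n\pi^{(j)}}$ equal to $1$ for every $j$, so the cross term collapses to $2\langle x-\bar x_{k+1},A^\top(\bar y_{k+1}-y_k)\rangle$, which cancels exactly with the residual $+2\langle x-\bar x_{k+1},A^\top(y_k-\bar y_{k+1})\rangle$ left in the combined prox-inequality displayed above. Substituting the primal and dual identities into that inequality, collecting the $\|x-x_k\|^2$ weights via the algebraic identity $\Tau_k^{-1}+(\Tau_k^{-1}+\mu_g I)(\Pi^{-1}-I)=(\Tau_k^{-1}+\mu_g I)\Pi^{-1}-\mu_g I$, and discarding the nonpositive $-\|\bar x_{k+1}-x_k\|^2_{\Tau_k^{-1}}$ term on the right-hand side yields the claimed one-iteration bound.
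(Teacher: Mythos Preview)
Your proposal is correct and follows essentially the same approach as the paper: both derive the strongly-convex prox inequalities for $\bar x_{k+1}$ and $\bar y_{k+1}$, combine them via the definition of $G$ to leave the single bilinear residual $2\langle x-\bar x_{k+1},A^\top(y_k-\bar y_{k+1})\rangle$, transfer $\bar y_{k+1}\to y_{k+1}$ using the second identity of Lemma~\ref{lem: tech_lemma_one_iter}, and transfer $\bar x_{k+1}\to x_{k+1}$ using (your inline derivation of) the first identity of Lemma~\ref{lem: tech_lemma_one_iter} with $\Beta_k=(\Tau_k^{-1}+\mu_g)\Pi^{-1}$, so that the choice $\Theta_k=n\Pi(I+\mu_g\Tau_k)^{-1}$ makes the cross term exactly cancel the residual. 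The only cosmetic difference is the order of operations (the paper performs the primal transfer before combining with the dual inequality, you combine first and transfer afterward); the algebra and the key cancellation are identical.
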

\begin{proof}
By the prox-inequality \eqref{eq: prox_ineq} applied on the definition of $\bar x_{k+1}$, it follows that for any $x$, we have 
\begin{align*}
\langle \bar x_{k+1} - x_k + \Tau_k A^\top y_k, \Tau_k^{-1}(x - \bar x_{k+1}) \rangle + g(x) - g(\bar x_{k+1}) \geq \frac{\mu_g}{2} \| x-\bar x_{k+1} \|^2.
\end{align*}
Similar to the derivation of \eqref{eq: purecd_primal_ineq1}, we obtain
\begin{align}\label{eq: maxexp_eq0}
\| x- \bar x_{k+1} \|^2_{\Tau_k^{-1}+\mu_g} + 2(g(\bar x_{k+1}) - g(x)) \leq \| x-x_k\|^2_{\Tau_k^{-1}} - \| \bar x_{k+1} - x_k \|^2_{\Tau_k^{-1}} + 2 \langle A^\top y_k, x-\bar x_{k+1} \rangle.
\end{align}
By the update rule of $x_{k+1}$ from step~\ref{st: rg4}, we have for any $\Beta_k = \diag(\beta_k^{(1)}, \dots, \beta_k^{(d)}) \succ 0$, and for any deterministic $x$, (see~\cite[Lemma 2]{alacaoglu2020random} and \Cref{lem: tech_lemma_one_iter}) that 
\begin{multline}\label{eq: sparse_lemma_output}
    \mathbb{E}_k \| x-x_{k+1} \|^2_{\Beta_k} = \| x-\bar x_{k+1}  \|^2_{\Beta_k\Pi} - \|x- x_k  \|^2_{\Beta_k \Pi} + \| x-x_k \|^2_{\Beta_k}\\
    -2\langle \bar x_{k+1} - x, n^{-1} \Beta_k \Tau_k \Theta_k A^\top (\bar y_{k+1} - y_k) \rangle
    +\| \bar y_{k+1} - y_k\|^2_{M_y(\Beta_k)},
\end{multline}
where $M_y^{(i)}(\Beta_k) = \sum_{j=1}^d \frac{1}{n} \beta_k^{(j)} (\tau_k^{(j)})^2 (\theta_k^{(j)})^2 A_{i, j}^2$ and $\pi^{(j)} = \frac{|i\in I(j)|}{n}$.
By setting $\beta_k^{(j)} = (1+\mu_g\tau_k^{(j)})(\tau_k^{(j)} \pi^{(j)})^{-1}$ (that is, $\Beta_k = (\Tau_k^{-1}+\mu_g)\Pi^{-1} = (I+\mu_g\Tau_k)\Tau_k^{-1}\Pi^{-1}$) in \eqref{eq: sparse_lemma_output}, we obtain
\begin{multline*}
    \mathbb{E}_k \| x- x_{k+1} \|^2_{(\Tau_k^{-1}+\mu_g)\Pi^{-1}} = \| x-\bar x_{k+1}\|^2_{\Tau_k^{-1}+\mu_g} - \| x- x_k  \|^2_{\Tau_k^{-1}+\mu_g} + \| x - x_k\|^2_{(\Tau_k^{-1}+\mu_g)\Pi^{-1}} \\
    -2\langle \bar x_{k+1} - x, n^{-1} (I+\mu_g \Tau_k)\Pi^{-1} \Theta_k A^\top (\bar y_{k+1}-y_k) \rangle + \| \bar y_{k+1} - y_k\|^2_{M_y((\Tau_k^{-1}+\mu_g)\Pi^{-1})}.
\end{multline*}
We add this equality to~\eqref{eq: maxexp_eq0} and drop $-\|\bar x_{k+1} -x\|^2_{\Tau_k^{-1}}$ on the RHS to deduce that
\begin{multline}\label{eq: maxexp_eq1}
\mathbb{E}_k \| x-x_{k+1} \|^2_{(\Tau_k^{-1}+\mu_g)\Pi^{-1}} + 2(g(\bar x_{k+1})-g( x)) \leq \| x-x_k\|^2_{(\Tau_k^{-1}+\mu_g)\Pi^{-1}-\mu_g}  + 2 \langle A^\top y_k, x-\bar x_{k+1} \rangle \\
-2\langle \bar x_{k+1} - x, n^{-1} (I+\mu_g\Tau_k)\Pi^{-1} \Theta_k A^\top (\bar y_{k+1}-y_k) \rangle + \|\bar y_{k+1} - y_k\|^2_{M_y((\Tau_k^{-1}+\mu_g)\Pi^{-1})}.
\end{multline}
For the dual variable we have similarly for any $y$ (see~\cref{eq: purecd_dual_ineq1,eq: prox_ineq} and the definition of $\bar y_{k+1}$),
\begin{align}\label{eq: maxexp_eq2}
\| y-\bar y_{k+1} \|^2_{\Sigma_k^{-1}+\mu_h} + 2(h^\ast(\bar y_{k+1}) - h^\ast(y)) \leq \| y-y_k\|^2_{\Sigma_k^{-1}} - \| \bar y_{k+1} - y_k \|^2_{\Sigma_k^{-1}} - 2\langle A\bar x_{k+1}, y-\bar y_{k+1} \rangle.
\end{align}
Since $\mathbb{E}_k \| y-y_{k+1} \|^2_{\Sigma_k^{-1}+\mu_h} = n^{-1} \| y-\bar y_{k+1} \|^2_{\Sigma_k^{-1}+\mu_h} + (1-n^{-1}) \| y-y_k\|^2_{\Sigma_k^{-1}+\mu_h}$ (see~\Cref{lem: tech_lemma_one_iter}), we can substitute for $\| y-\bar y_{k+1} \|^2_{\Sigma_k^{-1}+\mu_h}$ in~\eqref{eq: maxexp_eq2} to obtain
\begin{equation}\label{eq: maxexp_eq2.5}
\begin{aligned}
& \| y-y_{k+1} \|^2_{(\Sigma_k^{-1}+\mu_h)n} + 2(h^\ast(\bar y_{k+1}) - h^\ast(y)) \\
& \leq \| y-y_k\|^2_{(\Sigma_k^{-1}+\mu_h)n - \mu_h} - \| \bar y_{k+1} - y_k \|^2_{\Sigma_k^{-1}} 
- 2\langle A\bar x_{k+1}, y-\bar y_{k+1} \rangle.
\end{aligned}
\end{equation}
Recall~\eqref{eq: g_deff} and
\begin{align}
&G(\bar x_{k+1}, \bar y_{k+1}, x, y) = g(\bar x_{k+1}) + \langle A\bar x_{k+1}, y \rangle - h^\ast(y) - g(x) - \langle Ax, \bar y_{k+1} \rangle + h^\ast(\bar y_{k+1}) \notag\\
\iff &G(\bar x_{k+1}, \bar y_{k+1}, x, y) = g(\bar x_{k+1}) + \langle A(\bar x_{k+1}-x), y \rangle - h^\ast(y) - g(x) - \langle Ax, \bar y_{k+1} - y \rangle + h^\ast(\bar y_{k+1}) \notag\\
\iff  & 2G(\bar x_{k+1}, \bar y_{k+1}, x, y) -2\left[ g(\bar x_{k+1}) - g(x) +h^\ast(\bar y_{k+1})- h^\ast(y) \right] = 2\langle A(\bar x_{k+1}-x), y \rangle - 2\langle Ax, \bar y_{k+1} - y \rangle.\label{eq: ig3}
\end{align}
Combine the inequalities for primal and dual (that is, \cref{eq: maxexp_eq1,eq: maxexp_eq2.5}) and add to them the identity in~\eqref{eq: ig3} to deduce
\begin{equation}\label{eq: main_ineq1_gaplike}
\begin{aligned}
& 2G(\bar x_{k+1}, \bar y_{k+1}, x, y)+ \mathbb{E}_k \| x-x_{k+1} \|^2_{(\Tau_k^{-1}+\mu_g)\Pi^{-1}} + \| y-  y_{k+1}\|^2_{(\Sigma_k^{-1}+\mu_h)n} \\
& \leq \| x\!-\!x_k\|^2_{(\Tau^{-1}+\mu_g)\Pi^{-1}-\mu_g} 
+\| y\!-\!y_k\|^2_{(\Sigma_k^{-1}+\mu_h)n-\mu_h}
- \| \bar y_{k+1} \!-\! y_k\|^2_{\Sigma_k^{-1}} + \|\bar y_{k+1} \!-\! y_k\|^2_{M_y((\Tau_k^{-1}+\mu_g)\Pi^{-1})} \\
& \quad - 2 \langle \bar x_{k+1}-x, n^{-1}(I+\mu_g\Tau_k)\Pi^{-1}\Theta_k A^\top(\bar y_{k+1} - y_k) \rangle \\
& \quad +2 \langle A^\top(y_k - y), x-\bar x_{k+1} \rangle + 2\langle \bar x_{k+1} - x, A^\top(\bar y_{k+1} - y) \rangle.
\end{aligned}
\end{equation}
First, note that the terms in the last line can be combined to write
\begin{align*}
    2 \langle A^\top(y_k - y), x-\bar x_{k+1} \rangle + 2\langle \bar x_{k+1} - x, A^\top(\bar y_{k+1} - y) \rangle = 2\langle x-\bar x_{k+1}, y_k-\bar y_{k+1} \rangle.
\end{align*}
Moreover, we see that by combining this with the inner product in the third line of~\eqref{eq: main_ineq1_gaplike}, we get
\begin{align*}
  &  - 2 \langle \bar x_{k+1}-x, n^{-1}(I+\mu_g\Tau_k)\Pi^{-1}\Theta_k A^\top(\bar y_{k+1} - y_k) \rangle 
+2 \langle A^\top(y_k - y), x-\bar x_{k+1} \rangle + 2\langle \bar x_{k+1} - x, A^\top(\bar y_{k+1} - y) \rangle \\
& = 2\langle x-\bar x_{k+1}, [I - n^{-1}(I+\mu_g\Tau_k)\Pi^{-1}\Theta_kA^\top (y_k - \bar y_{k+1})] \rangle = 0,
\end{align*}
which is equivalent to setting $\Theta_k =n\Pi(I+\mu_g\Tau_k)^{-1}$, as we do in the theorem.
We complete the proof by using this estimate on the last two lines of~\eqref{eq: main_ineq1_gaplike}.
\end{proof}

\subsubsection{General Convex-Concave Problems with Relaxed Gap-like Measure}\label{subsec: random_iter}
As illustrated in Table~\ref{tb: 1} and also highlighted in~\Cref{sec: conclusions}, the ideal result would be an improved complexity for $\mathbb{E}\max_{x, y} G(\xout, \yout, x, y)$ compared to deterministic methods, as we derived for ~\Cref{alg: purecd_dense} with dense $A$ in Section~\ref{sec: cvx_ccv_dense}.
However, the sparse-friendly~\Cref{alg: purecd_sparse} brings additional difficulties that prevent such a result being proved with our current techniques.
We include the next result to show the difference on the difficulty of deriving a guarantee on $\mathbb{E}\max_{(x, y)\in\mathcal{Z}}  G(\xout, \yout, x, y)$ compared to $\max_{(x, y)\in\mathcal{Z}} \mathbb{E} G(\xout, \yout, x, y)$ and also to be able to compare our results with some of the existing literature (see \Cref{tb: 2}).
On the other hand, as we already mentioned in Ex.~\ref{ex: np3}, $\max_{(x, y)\in\mathcal{Z}} \mathbb{E} G(\xout, \yout, x, y)$ has serious deficiencies as an optimality measure and it remains an important open question to derive guarantees for expected gap.

Our result shows that when the expectation is inside the $\max$, we can exploit its properties to derive a bound with a short and simple proof by simply {\em outputting a random iterate}. 
We cannot do such a trick for expected duality gap due to the existence of the nonlinear $\max$ operator between the expectation and $G$.
As shown in~\cite{alacaoglu2020random}, even the guarantee $\mathbb{E}\max_{(x, y)\in\mathcal{Z}}  G(x^K, y^K, x, y) \leq O\left(\frac{n\|A\|}{K}\right)$, which can only recover the complexity of the deterministic method in the sparse case, requires a much more intricate analysis.

\begin{theorem}\label{th: thm_max_e}
Let~\Cref{asmp: asmp1} and \Cref{asmp: asmp2} hold. 
Use~\Cref{alg: purecd_sparse} with the parameters 
$\tau_k^{(j)} \equiv \tau^{(j)} = \frac{1}{\pi^{(j)} n\max_i\|A_i\|} $, $\sigma_k^{(i)} \equiv \sigma^{(i)}=\frac{1}{\|A_i\|}$ and 
$ \theta^{(j)} =n\pi^{(j)}$ where $\pi^{(j)} = \frac{| i\in I(j)|}{n}$. Let us pick $\hat k$ uniformly at random from $\{1,\dots,K\}$, independent of all the other randomness in the algorithm.
Then, we have $\max_{(x, y)\in\mathcal{Z}} \mathbb{E} G(\bar x_{\hat k}, \bar y_{\hat k}, x, y) \leq \frac{n\max_i\|A_i\|}{K} D_{\mathcal{Z}}$ and hence,
\begin{equation*}
\max_{(x, y)\in\mathcal{Z}} \mathbb{E} G(\bar x_{\hat k}, \bar y_{\hat k}, x, y) \leq \varepsilon,
\end{equation*}
with complexity
\begin{equation*}
O\left( \nnz(A)+ \nnz(A) \max_i\|A_i\| D_{\mathcal{Z}} \varepsilon^{-1} \right).
\end{equation*}
\end{theorem}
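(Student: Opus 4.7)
The plan is to apply \Cref{lem: sparse_one_it} with $\mu_g=\mu_h=0$ (neither strong convexity assumption is in force) to a \emph{fixed} deterministic pair $(x,y)$, and then exploit the fact that $\hat{k}$ is uniform together with the linearity of expectation to swap $\max$ and $\mathbb{E}$ at the very end. Substituting the chosen step sizes, I expect the identity $\theta^{(j)}=n\pi^{(j)}$ to make the two $\|\bar y_{k+1}-y_k\|^2$ terms on the right-hand side cancel, leaving a clean telescoping recursion.

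Concretely, with $\tau^{(j)}=\tfrac{1}{\pi^{(j)}n\max_i\|A_i\|}$, $\sigma^{(i)}=\tfrac{1}{\|A_i\|}$, $\theta^{(j)}=n\pi^{(j)}$, I first compute $\tau^{(j)}(\theta^{(j)})^2=\tfrac{n\pi^{(j)}}{\max_i\|A_i\|}$ so that
\[
M_y^{(i)}(\Tau^{-1}\Pi^{-1}) \;=\; \sum_{j=1}^d \frac{1}{n\pi^{(j)}}\cdot\frac{n\pi^{(j)}}{\max_i\|A_i\|}\,A_{i,j}^2 \;=\; \frac{\|A_i\|^2}{\max_i\|A_i\|} \;\le\; \|A_i\| \;=\; (\Sigma^{-1})^{(i)}.
\]
Thus the residual terms $-\|\bar y_{k+1}-y_k\|^2_{\Sigma^{-1}}+\|\bar y_{k+1}-y_k\|^2_{M_y}$ in the conclusion of \Cref{lem: sparse_one_it} are nonpositive and can be dropped. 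Also, $(\Tau^{-1}\Pi^{-1})^{(j)}=n\max_i\|A_i\|$ is constant in $j$, so the $x$-weight simplifies to $n\max_i\|A_i\|\,I$, while the $y$-weight $n\Sigma^{-1}=\diag(n\|A_i\|)$ is bounded above by $n\max_i\|A_i\|\,I$.

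Taking total expectation, summing $k=0,\dots,K-1$, telescoping, and using $n\|y-y_0\|^2_{\Sigma^{-1}}\le n\max_i\|A_i\|\,\|y-y_0\|^2$ on the initial data, I get
\[
2\sum_{k=0}^{K-1}\mathbb{E}\,G(\bar x_{k+1},\bar y_{k+1},x,y)\;\le\; n\max_i\|A_i\|\,\bigl(\|x-x_0\|^2+\|y-y_0\|^2\bigr)
\]
for any deterministic $(x,y)$. Since $\hat{k}$ is independent and uniform on $\{1,\dots,K\}$, the left-hand side equals $2K\,\mathbb{E}\,G(\bar x_{\hat k},\bar y_{\hat k},x,y)$. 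Here comes the key maneuver that distinguishes this weaker measure from the expected gap: because the right-hand side depends on $(x,y)$ only through the deterministic quantity $\|x-x_0\|^2+\|y-y_0\|^2$, taking $\max_{(x,y)\in\mathcal{Z}}$ on both sides and applying the definition \eqref{eq:DZ} of $D_{\mathcal{Z}}$ directly yields
\[
\max_{(x,y)\in\mathcal{Z}} \mathbb{E}\,G(\bar x_{\hat k},\bar y_{\hat k},x,y) \;\le\; \frac{n\max_i\|A_i\|}{K}\,D_{\mathcal{Z}}.
\]
This swap is trivial precisely because the $\max$ stays outside a single expectation; the analogous argument for $\mathbb{E}\max$ would fail, which is the essence of the limitation discussed above.

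For the complexity, I invoke \Cref{eq: sparse_cost_rem}: since $i_k$ is chosen uniformly and each iteration touches only rows/columns indexed by $J(i_k)$, the expected per-iteration work is $\tfrac{\nnz(A)}{n}$. Requiring $K\ge n\max_i\|A_i\|\,D_{\mathcal{Z}}/\varepsilon$ suffices to drive the bound below $\varepsilon$, giving total expected cost $O(\nnz(A)\max_i\|A_i\|\,D_{\mathcal{Z}}\,\varepsilon^{-1})$, to which the $O(\nnz(A))$ set-up/initialization cost is added. I do not anticipate any real obstacle here: the only step that requires care is the cancellation check for $M_y$ versus $\Sigma^{-1}$, which is a direct verification with the prescribed step sizes.
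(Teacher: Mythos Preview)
Your proposal is correct and follows essentially the same route as the paper: apply \Cref{lem: sparse_one_it} with $\mu_g=\mu_h=0$, verify that the chosen step sizes make $M_y^{(i)}(\Tau^{-1}\Pi^{-1})\le(\Sigma^{-1})^{(i)}$ so the residual terms drop, telescope the resulting inequality after taking total expectation, then use the uniform random index $\hat k$ and finally take the max over $(x,y)\in\mathcal{Z}$ on both sides. The complexity argument via \Cref{eq: sparse_cost_rem} also matches.
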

\begin{remark}
In practice, to run this algorithm, one would select $\hat k$ and then compute the vectors $\bar x_{k+1}, \bar y_{k+1}$ at $k=\hat k-1$, only once, to not incur additional computational cost.
Such a trick is used before in~\cite[Remark 5]{fercoq2019coordinate},~\cite[Section~6.1.1.1]{lan2020first},~\cite[SDCA with random option]{shalev2013stochastic}, and in general for nonconvex optimization.
In practice, one can obtain this guarantee by only running the algorithm until iteration $\hat k-1$.
\end{remark}


\begin{proof}[Proof of Thm.~\ref{th: thm_max_e}]
We use \Cref{lem: sparse_one_it} with $\mu_g=\mu_h=0$, $\Tau_k = \Tau$, $\Sigma_k = \Sigma$ and $\Theta_k = \theta I = n\Pi$  to obtain
\begin{equation}\label{eq: maxexp_eq3}
\begin{aligned}
& 2G(\bar x_{k+1}, \bar y_{k+1}, x, y) + \mathbb{E}_k \| x-x_{k+1} \|^2_{\Tau^{-1}\Pi^{-1}} + \mathbb{E}_k\| y- y_{k+1}\|^2_{n\Sigma^{-1}} \\
& \leq \| x-x_k\|^2_{\Tau^{-1}\Pi^{-1}}+\| y-y_k\|^2_{n\Sigma^{-1}} 
- \| \bar y_{k+1} - y_k\|^2_{\Sigma^{-1}} + \|\bar y_{k+1} - y_k\|^2_{M_y(\Tau^{-1}\Pi^{-1})}.
\end{aligned}
\end{equation}
To cancel the last two terms in the RHS, by the definition of $M_y$ in~\Cref{lem: sparse_one_it}, we need
\begin{equation}
    \frac{1}{\sigma^{(i)}} - \sum_{j=1}^m \frac{1}{n\pi^{(j)}}\tau^{(j)}(\theta^{(j)})^2 A_{j, i}^2 \geq 0 \iff
    \frac{1}{\sigma^{(i)}} - \sum_{j=1}^m n\pi^{(j)}\tau^{(j)} A_{j, i}^2 \geq 0,\label{eq: tg4}
\end{equation}
where the equivalence is due to the definition of $\theta^{(j)}$.
Recall $\sigma^{(i)} = \frac{1}{\|A_i\|}\geq\frac{1}{\max_i\|A_i\|}$ and $\tau^{(j)} = \frac{1}{ \pi^{(j)} n\max_i \|A_i\|}$ and hence~\eqref{eq: tg4} is satisfied.
As a result, we get after taking total expectation and summation in~\eqref{eq: maxexp_eq3} that
\[
\mathbb{E} \frac{1}{K}\sum_{k=0}^{K-1} G(\bar x_{k+1}, \bar y_{k+1}, x, y) \leq \|x-x_0\|^2_{\Tau^{-1}\Pi^{-1}} + \| y-y_0\|^2_{n\Sigma^{-1}} =\frac{n\max_i\|A_i\|}{K}\left(\| x-x_0\|^2+\| y-y_0\|^2\right).
\]
Since the choice of $\hat k$ is independent of other randomness in the algorithm, we have
\begin{equation*}
\mathbb{E} G(\bar x_{\hat k}, \bar y_{\hat k}, x, y)] = \mathbb{E} [\mathbb{E}_{\hat k} G(\bar x_{\hat k}, \bar y_{\hat k}, x, y)] = \mathbb{E} \frac{1}{K} \sum_{k=1} ^{K} G(\bar x_{ k}, \bar y_{ k}, x, y) \leq \frac{n\max_i\|A_i\|}{K}\left(\| x-x_0\|^2+\| y-y_0\|^2\right).
\end{equation*}
We take maximum and use the definition of $D_\mathcal{Z}$ from \Cref{sec: notation} to derive the first inequality in the proof.
For the complexity, we use the fact that one needs ${n\max_i\|A_i\|D_{\mathcal{Z}}}{\varepsilon^{-1}}$ iterations to obtain $\max_{x, y} \mathbb{E}G(\bar x_{\hat k}, \bar y_{\hat k}, x, y) \leq \varepsilon$ and we multiply by the expected cost per iteration, which is  $\frac{1}{n}\nnz(A)$ (see~\Cref{eq: sparse_cost_rem}).
The additional term $\nnz(A)$ is for the initial computation $A^\top y_0$.
\end{proof}

\section{Convergence in the Strongly Convex-Strongly Concave Case~(\Cref{alg: purecd_sparse})}\label{sec: str_cvx_str_ccv}
We focus now on the convergence behavior of \Cref{alg: purecd_sparse} when $g$ and $h^\ast$ in~\eqref{eq:prob} are both strongly convex and $A$ is sparse.
In the context of ERM~\eqref{eq: case2_prob}, this case corresponds to a smooth loss function and a strongly convex regularizer, which has been the main focus of many papers in the field~\cite{allen2017katyusha,zhang2015stochastic,tan2020accelerated}.
In~\cite{zhang2015stochastic,tan2020accelerated}, lazy update strategies are described, to adapt the iteration cost to sparsity of the data matrix.
Our complexity will be matching the ones given in these works.

The previous approaches essentially analyze the algorithms in the dense setting and use lazy update techniques to exploit the sparsity.
Our approach is to analyze the sparse-friendly algorithm directly, which is more challenging due to additional randomness in the primal updates.
The advantage of this approach is that it requires no special implementation features like lazy updates; \Cref{alg: purecd_sparse} adapts to sparsity in $A$ naturally.

The following theorem establishes the bound claimed in the last column of Table~\ref{tb: 3} for PURE-CD.
\begin{theorem}\label{th: strcvx_strccv}
Let~\Cref{asmp: asmp1} and \Cref{asmp: asmp2} hold and suppose that $g$ is $\mu_g > 0$ strongly convex and $h_i^\ast$ is $\mu_h > 0$ strongly convex. Recall $\pi^{(j)} = \frac{| i\in I(j)|}{n}$. In Alg~\ref{alg: purecd_sparse}, set
\begin{equation*}
\theta^{(j)} = \frac{\pi^{(j)} n}{1+\mu_g\tau^{(j)}},~~~~ \tau^{(j)} = \frac{\sqrt{\mu_h}}{\sqrt{\mu_g}\max_i \|A_i\| \pi^{(j)} n},~~~~ \sigma^{(i)} = \frac{\sqrt{\mu_g}}{\sqrt{\mu_h} \|A_i\| }.
\end{equation*}
Then we have that $\mathbb{E}\left[ \| x_{K} - x_\star \|^2 + \|y_{K} - y_\star \|^2 \right] \leq \varepsilon$ with expected complexity
\begin{equation*}
\tilde{O}\left( \left( \nnz(A) + \nnz(A)\frac{ \max_i\| A_i\|}{\sqrt{\mu_h\mu_g}} \right)\log \varepsilon^{-1}\right).
\end{equation*}
\end{theorem}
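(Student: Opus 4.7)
The plan is to specialize Lemma~\ref{lem: sparse_one_it} to constant step sizes, set $(x,y)=(x_\star,y_\star)$, discard the nonnegative term $2G(\bar x_{k+1},\bar y_{k+1},x_\star,y_\star)\geq 0$, and derive a contraction for the weighted Lyapunov function
\[
V_k := \|x_\star - x_k\|^2_{(\Tau^{-1}+\mu_g)\Pi^{-1}} + \|y_\star - y_k\|^2_{n(\Sigma^{-1}+\mu_h)}.
\]
Note that the prescribed $\theta^{(j)}=n\pi^{(j)}/(1+\mu_g\tau^{(j)})$ is exactly the choice $\Theta = n\Pi(I+\mu_g\Tau)^{-1}$ required by Lemma~\ref{lem: sparse_one_it}, so that lemma applies without modification.

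First I would verify that the two terms involving $\bar y_{k+1}-y_k$ in the RHS of Lemma~\ref{lem: sparse_one_it} cancel. Substituting $\theta^{(j)}$ into $M_y^{(i)}((\Tau^{-1}+\mu_g)\Pi^{-1})$ collapses the ``extra $1+\mu_g\tau^{(j)}$'' factor, giving $M_y^{(i)} = \sum_{j} \frac{n\pi^{(j)}\tau^{(j)}}{1+\mu_g\tau^{(j)}} A_{i,j}^2$. Since $n\pi^{(j)}\tau^{(j)} = \sqrt{\mu_h}/(\sqrt{\mu_g}\max_i\|A_i\|)$ is \emph{independent of $j$} and $(1+\mu_g\tau^{(j)})^{-1}\leq 1$, this telescopes to at most $\frac{\sqrt{\mu_h}\|A_i\|^2}{\sqrt{\mu_g}\max_i\|A_i\|}\leq \frac{\sqrt{\mu_h}\|A_i\|}{\sqrt{\mu_g}} = (\sigma^{(i)})^{-1}$, so the residual is nonpositive coordinate by coordinate.

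Next I would extract the per-step contraction. The diagonal weight on the primal side has entries $\frac{1+\mu_g\tau^{(j)}}{\tau^{(j)}\pi^{(j)}}$, so subtracting $\mu_g$ shrinks each entry by the ratio $1 - \frac{\mu_g \tau^{(j)}\pi^{(j)}}{1+\mu_g\tau^{(j)}}$. With the chosen step sizes, $\mu_g\tau^{(j)}\pi^{(j)} = \sqrt{\mu_g\mu_h}/(n\max_i\|A_i\|)$ is \emph{uniform in $j$}, and $1+\mu_g\tau^{(j)}$ is bounded by a constant in the standard regime $\sqrt{\mu_g\mu_h}\leq \max_i\|A_i\|$ combined with $\pi^{(j)} n \geq 1$. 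The analogous computation on the dual side yields the ratio $1 - \frac{\sqrt{\mu_g\mu_h}}{n(\|A_i\|+\sqrt{\mu_g\mu_h})}$, which is of the same order. Hence both reductions combine into
\[
\mathbb{E}_k V_{k+1} \;\leq\; \bigl(1-\rho\bigr) V_k, \qquad \rho \;=\; \Omega\!\left(\frac{\sqrt{\mu_g\mu_h}}{n\max_i\|A_i\|}\right).
\]

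Iterating in expectation yields $\mathbb{E} V_K \leq (1-\rho)^K V_0$. To pass from $V_K$ to $\mathbb{E}[\|x_\star-x_K\|^2+\|y_\star-y_K\|^2]$, I bound the minimal diagonal entries of $(\Tau^{-1}+\mu_g)\Pi^{-1}$ and $n(\Sigma^{-1}+\mu_h)$ from below; these ratios only enter logarithmically in $K$. Choosing $K = O\!\bigl(\frac{n\max_i\|A_i\|}{\sqrt{\mu_g\mu_h}}\log\varepsilon^{-1}\bigr)$ then yields the target accuracy, and multiplying by the expected per-iteration cost $\nnz(A)/n$ (Remark~\ref{eq: sparse_cost_rem}) plus the one-time $\nnz(A)$ for forming $A^\top y_0$ gives the stated $\tilde O\bigl((\nnz(A) + \nnz(A)\max_i\|A_i\|/\sqrt{\mu_g\mu_h})\log\varepsilon^{-1}\bigr)$. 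The main obstacle is the balancing step: the primal and dual contraction factors a~priori look different, and it is the specific ratio $\sigma^{(i)}\tau^{(j)}\propto 1/(\pi^{(j)} n\|A_i\|\max_{i'}\|A_{i'}\|)$ together with the scaling $\sqrt{\mu_h/\mu_g}$ versus $\sqrt{\mu_g/\mu_h}$ that makes them match while also validating the $M_y\preceq \Sigma^{-1}$ condition above.
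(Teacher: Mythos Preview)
Your proposal is correct and follows essentially the same approach as the paper: apply Lemma~\ref{lem: sparse_one_it} at $(x_\star,y_\star)$, verify that the residual terms $-\|\bar y_{k+1}-y_k\|^2_{\Sigma^{-1}}+\|\bar y_{k+1}-y_k\|^2_{M_y}$ are nonpositive under the given step sizes, extract a uniform contraction for the weighted distance, iterate, and multiply by the expected per-iteration cost. One minor refinement: the paper avoids your ``standard regime'' assumption $\sqrt{\mu_g\mu_h}\le\max_i\|A_i\|$ by computing the contraction constant exactly as $c=1+\tfrac{1}{n-1+n\max_i\kappa^{(i)}}$ with $\kappa^{(i)}=\|A_i\|/\sqrt{\mu_g\mu_h}$; this directly produces the additive $\nnz(A)$ term in the complexity (handling the regime $\max_i\kappa^{(i)}<1$) rather than treating it as a separate case.
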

\begin{proof}
Let $z_\star=(x_\star,y_\star)$ be the unique solution of \eqref{eq:prob2}.
We use the result of Lemma~\ref{lem: sparse_one_it} with $\mu_g>0$, $\mu_h > 0$, $\Tau_k = \Tau$, $\Sigma_k = \Sigma$, and $\Theta_k = \Theta= (I+\mu_g\Tau)^{-1}n\Pi$, together with $G(\bar{x}_{k+1},\bar{y}_{k+1},x_\star,y_\star) \ge 0$, to write
\begin{align}
    \label{eq: lin_eq1}
    & \mathbb{E}_k \| x_\star-x_{k+1} \|^2_{(\Tau^{-1}+\mu_g)\Pi^{-1}} + \mathbb{E}_k\| y_\star- y_{k+1}\|^2_{(\Sigma^{-1} + \mu_h)n}  \\
    \nonumber
    & \le \| x_\star-x_k\|^2_{(\Tau^{-1}+\mu_g)\Pi^{-1} - \mu_g}+\| y_\star-y_k\|^2_{(\Sigma^{-1}+\mu_h)n-\mu_h}
 - \| \bar y_{k+1} - y_k\|^2_{\Sigma^{-1}} + \|\bar y_{k+1} - y_k\|^2_{M_y((\Tau^{-1} + \mu_g)\Pi^{-1})}.
\end{align}
By the definition of $M_y$ from Lemma~\ref{lem: sparse_one_it}, and the definition of $\theta^{(j)}$, the aggregate of the last two terms on the RHS of \eqref{eq: lin_eq1} is nonpositive provided that
\begin{equation}\label{eq: som_ss_cond}
    \frac{1}{\sigma^{(i)}} - \sum_{j=1}^d \frac{1+\mu_g\tau^{(j)}}{n\pi^{(j)}}\tau^{(j)}(\theta^{(j)})^2 A_{i, j}^2 \geq 0 \iff
    \frac{1}{\sigma^{(i)}} - \sum_{j=1}^d \frac{n\pi^{(j)}}{1+\mu_g\tau^{(j)}}\tau^{(j)} A_{i, j}^2 \geq 0,
\end{equation}
for which a sufficient condition is
\begin{equation}\label{eq: som_ss_cond2}
    \frac{1}{\sigma^{(i)}} - \sum_{j=1}^d n\pi^{(j)} \tau^{(j)} A_{i,j}^2 \geq 0.
\end{equation}
From our definitions of $\tau^{(j)}$ and $\sigma^{(i)}$, it can be verified that \eqref{eq: som_ss_cond2} holds. 
We can thus simplify \eqref{eq: lin_eq1} to
\begin{equation}
    \label{eq: lin_eq1a}
    \begin{aligned}
    & \mathbb{E}_k \| x_\star-x_{k+1} \|^2_{(\Tau^{-1}+\mu_g)\Pi^{-1}} + \mathbb{E}_k\| y_\star- y_{k+1}\|^2_{(\Sigma^{-1} + \mu_h)n} \\
    & \le \| x_\star-x_k\|^2_{(\Tau^{-1}+\mu_g)\Pi^{-1} - \mu_g}+\| y_\star-y_k\|^2_{(\Sigma^{-1}+\mu_h)n-\mu_h}.
    \end{aligned}
\end{equation}
To get a contraction in~\eqref{eq: lin_eq1a}, we seek a constant $c>1$ such that
\begin{equation}\label{eq: scsc_contr_factors}
\begin{aligned}
\| x_\star - x_{k+1} \|^2_{(\Tau^{-1} + \mu_g)\Pi^{-1}} &\geq c \| x_\star - x_{k+1} \|^2_{(\Tau^{-1} + \mu_g)\Pi^{-1}-\mu_g},\\
\| y_\star - y_{k+1} \|^2_{(\Sigma^{-1}+\mu_h)n} &\geq c \| y_\star - y_{k+1} \|^2_{(\Sigma^{-1}+\mu_h)n-\mu_h},
\end{aligned}
\end{equation}
which, because of the diagonality of the matrices defining the weighted norms, are implied by
\begin{subequations} 
\label{eq:c12}
\begin{align}
\label{eq:c1}
    ((\Tau^{-1}+\mu_g)\Pi^{-1}) ((\Tau^{-1}+\mu_g)\Pi^{-1}-\mu_g)^{-1} & \geq c I, \\
\label{eq:c2}
    ((\Sigma^{-1}+\mu_h)n) ((\Sigma^{-1}+\mu_h)n-\mu_h)^{-1}& \geq c I.
\end{align}
\end{subequations}
Defining $\kappa^{(i)} := \frac{\|A_i\|}{\sqrt{\mu_g\mu_h}}$, we show that the bounds \eqref{eq:c12} hold for 
\begin{equation}
    \label{eq:cdef}
    c := 1+ \frac{1}{n - 1 + n \max_i \kappa^{(i)}}.
\end{equation}
For \eqref{eq:c1}, we have from $\mu_g\tau^{(j)}\pi^{(j)} = \frac{\sqrt{\mu_g \mu_h}}{n\max_i\|A_i\|} = \frac{1}{n \max_i \kappa^{(i)}}$ and $\pi^{(j)} \geq \frac{1}{n}$ that
\begin{multline*}
\frac{(\tau^{(j)})^{-1}(\pi^{(j)})^{-1}+\mu_g(\pi^{(j)})^{-1}}{(\tau^{(j)})^{-1}(\pi^{(j)})^{-1}+\mu_g(\pi^{(j)})^{-1} - \mu_g}
= 
1 + \frac{\mu_g}{(\tau^{(j)})^{-1}(\pi^{(j)})^{-1}+\mu_g(\pi^{(j)})^{-1} - \mu_g} \\
= 1+ \frac{1}{\frac{1}{\pi^{(j)}} - 1 + \frac{1}{\mu_g\tau^{(j)}\pi^{(j)}}} 
= 1+ \frac{1}{\frac{1}{\pi^{(j)}} - 1 + n \max_i \kappa^{(i)}} 
\geq 1+ \frac{1}{n - 1 + n \max_i \kappa^{(i)}} = c.
\end{multline*}
For \eqref{eq:c2}, we have from $\sigma^{(i)}\mu_h = \frac{\sqrt{\mu_g\mu_h}}{\|A_i\|} = \frac{1}{\kappa^{(i)}}$ that
\[
\frac{(\sigma^{(i)}) ^{-1}n+ \mu_h n}{(\sigma^{(i)})^{-1}n+n\mu_h - \mu_h} 
= 1 + \frac{\mu_h}{(\sigma^{(i)})^{-1}n+n\mu_h-\mu_h}
= 1 + \frac{1}{n(\sigma^{(i)})^{-1} \mu_h^{-1} + n-1} = 1+\frac{1}{n-1+n\kappa^{(i)} }\geq c.
\]
By substituting \eqref{eq: scsc_contr_factors} and \eqref{eq:cdef} into~\eqref{eq: lin_eq1a} after taking total expectation we obtain
\begin{multline*}
\left( 1+ \frac{1}{n-1+n\max_i \kappa^{(i)}} \right)\mathbb{E}\left[ \| x_\star-x_{k+1} \|^2_{(\Tau^{-1}+\mu_g)\Pi^{-1} - \mu_g} + \| y_\star-y_{k+1} \|^2_{(\Sigma^{-1} + \mu_h)n-\mu_h} \right] \\
\leq \mathbb{E}\left[ \| x_\star-x_{k} \|^2_{(\Tau^{-1}+\mu_g)\Pi^{-1} - \mu_g} +\| y_\star-y_{k} \|^2_{(\Sigma^{-1} + \mu_h)n-\mu_h}\right].
\end{multline*}
Defining $D_{\star}^{sc} := \| x_\star-x_{0} \|^2_{(\Tau^{-1}+\mu_g)\Pi^{-1} - \mu_g} +\| y_\star-y_{0} \|^2_{(\Sigma^{-1} + \mu_h)n-\mu_h}$, we can  iterate the inequality above to obtain
\begin{equation}
\mathbb{E}\left[ \| x_\star-x_{K} \|^2_{(\Tau^{-1}+\mu_g)\Pi^{-1} - \mu_g} + \| y_\star-y_{K} \|^2_{(\Sigma^{-1} + \mu_h)n-\mu_h} \right]
\leq \left( 1+ \frac{1}{n-1+n\max_i \kappa^{(i)}} \right)^{-K} D_{\star}^{sc}.
\end{equation}
By using the bound $t\geq \log(1+t)\geq t/2$ for $t\in[0,1]$, we can deduce that the number of iterations needed to make the LHS smaller than $\varepsilon$ is $O\left((n+n\max_i\kappa^{(i)})\log\left(\frac{D_\star^{sc}}{\varepsilon}\right)\right)$.
The complexity result follows by multiplying the number of iterations and the expected per iteration cost, which is $\frac{1}{n}\nnz(A)$ (see~\Cref{eq: sparse_cost_rem}).
The complicated metrics in the norms only affect the constant in the logarithmic terms in this bound, and are suppressed in the final result in the notation $\tilde O$.
\end{proof}

\section{Convergence with One-Sided Strong Convexity (\Cref{alg: purecd_sparse})}\label{sec: str_cvx_ccv}
We focus now on two cases of~\eqref{eq:prob} in which either the primal or dual is strongly convex, but not both.
We analyze~\Cref{alg: purecd_sparse} and derive guarantees that depend on the sparsity of $A$.

Let us recall the problem  \eqref{eq:prob}:
\begin{equation*}
    \min_x \max_y \sum_{i=1}^n \langle A_ix, y^{(i)} \rangle - h_i^\ast(y^{(i)}) + g(x).
\end{equation*}
A well known instance of the case in which $g(x)$ is strongly convex is SVM. 
When $h^\ast(x)$ is strongly convex, we have that $h$ is smooth, and the class of problems with this property includes  Lasso ($\ell_1$-regularized linear least squares).
In these cases, it is well known that we can obtain iteration complexity bounds of $O\left( \frac{1}{\sqrt{\varepsilon}} \right)$ by using accelerated methods in the primal~\cite{allen2017katyusha}, stochastic dual coordinate ascent~\cite{shalev2013stochastic,shalev2014accelerated}, or primal-dual methods~\cite{chambolle2011first,chambolle2016ergodic,chambolle2018stochastic}.
We now derive similar complexities for PURE-CD, with an explicit dependence on complexity depending on $\nnz(A)$.
\subsection{Analysis for the Strongly Convex-Concave Case}

We deal first with the case in which $g$ is strongly convex while $h^\ast$ is only convex. 
The following theorem establishes the complexity bound for PURE-CD from the first column of \Cref{tb: 3}.
\begin{theorem}\label{th: scc}
Suppose that~\Cref{asmp: asmp1},~\ref{asmp: asmp2} hold and that $g$ is $\mu_g > 0$ strongly convex. Recall $\pi^{(j)} = \frac{| i\in I(j)|}{n}$. In \Cref{alg: purecd_sparse}, we set
\begin{equation} \label{eq:is9}
\begin{aligned}
\tilde \tau_0 &= \min\left\{ \frac{1}{n}, \frac{\mu_g}{n\max_i\|A_i\|}\right\},~~~~ \sigma_0 = \frac{1}{\max_i \|A_i\|}, ~~~~\Sigma_k=\sigma_k I\\
 \tilde \tau_{k+1} &= \frac{\tilde \tau_k}{\sqrt{1+\tilde \tau_k}},~~~~  \tau_{k}^{(j)} = \frac{\tilde \tau_k}{\mu_g\pi^{(j)} - \mu_g(1-\pi^{(j)})\tilde \tau_k},~~~~ \sigma_{k+1} = \sigma_k \sqrt{1+\tilde\tau_k},~~~~
 \theta_{k}^{(j)} = \frac{\pi^{(j)} n}{1+\mu_g\tau_{k}^{(j)}}.
\end{aligned}
\end{equation}
Then we  have $\mathbb{E}\left[ \| x_{K} - x_\star \|^2 \right] \leq \varepsilon$ for $K$ satisfying
\begin{equation} \label{eq:K5}
K \ge \frac{\sqrt{D_\star}}{\sqrt{\varepsilon}} 3n \max \left( 1, \frac{\max_i \|A_i\|}{\mu_g} \right),
\end{equation}
with expected complexity
\begin{equation*}
{O}\left( \nnz(A) \left( 1 + \frac{\sqrt{D_\star}}{\sqrt{\varepsilon}}  \max \left(1, \frac{\max_i \| A_i\|}{\mu_g} \right) \right) \right),
\end{equation*}
where $D_\star$ is defined in \eqref{eq:Dstar}.
\end{theorem}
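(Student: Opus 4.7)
The plan is to apply \Cref{lem: sparse_one_it} at $(x,y)=(x_\star,y_\star)$ with $\mu_h=0$ and $\Sigma_k=\sigma_k I$, drop the nonnegative quantity $2G(\bar x_{k+1},\bar y_{k+1},x_\star,y_\star)$, and first force the two dual residual terms $-\|\bar y_{k+1}-y_k\|^2_{\Sigma_k^{-1}}+\|\bar y_{k+1}-y_k\|^2_{M_y((\Tau_k^{-1}+\mu_g)\Pi^{-1})}$ to aggregate non-positively. With $\theta_k^{(j)}=n\pi^{(j)}/(1+\mu_g\tau_k^{(j)})$ and the $\tau_k^{(j)}$ prescribed in \eqref{eq:is9}, a direct simplification produces $\tau_k^{(j)}/(1+\mu_g\tau_k^{(j)})=\tilde\tau_k/[\mu_g\pi^{(j)}(1+\tilde\tau_k)]$, so the coordinate-wise cancellation condition collapses to $n\sigma_k\tilde\tau_k\max_i\|A_i\|^2\le\mu_g(1+\tilde\tau_k)$. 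The updates preserve $\sigma_{k+1}\tilde\tau_{k+1}=\sigma_k\tilde\tau_k$, and both branches of the $\min$ in the definition of $\tilde\tau_0$ give $n\sigma_0\tilde\tau_0\max_i\|A_i\|^2\le\mu_g$, so the condition holds for all $k$.

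Substituting the same $\tau_k^{(j)}$ into the remaining matrix factors of \Cref{lem: sparse_one_it} yields the coordinate-independent identities $((\tau_k^{(j)})^{-1}+\mu_g)(\pi^{(j)})^{-1}-\mu_g=\mu_g/\tilde\tau_k$ and $((\tau_k^{(j)})^{-1}+\mu_g)(\pi^{(j)})^{-1}=\mu_g(1+\tilde\tau_k)/\tilde\tau_k$. Setting the Lyapunov function $V_k:=(\mu_g/\tilde\tau_k)\|x_\star-x_k\|^2+(n/\sigma_k)\|y_\star-y_k\|^2$, the one-iteration bound becomes $\tfrac{\mu_g(1+\tilde\tau_k)}{\tilde\tau_k}\E_k\|x_\star-x_{k+1}\|^2+\tfrac{n}{\sigma_k}\E_k\|y_\star-y_{k+1}\|^2\le V_k$. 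Dividing through by $\sqrt{1+\tilde\tau_k}$ and invoking $\tilde\tau_{k+1}=\tilde\tau_k/\sqrt{1+\tilde\tau_k}$ and $\sigma_{k+1}=\sigma_k\sqrt{1+\tilde\tau_k}$ gives the clean recursion $\E_k V_{k+1}\le V_k/\sqrt{1+\tilde\tau_k}$. Iterating and telescoping via $\prod_{k=0}^{K-1}(1+\tilde\tau_k)^{-1/2}=\prod_{k=0}^{K-1}\tilde\tau_{k+1}/\tilde\tau_k=\tilde\tau_K/\tilde\tau_0$ yields $\E V_K\le(\tilde\tau_K/\tilde\tau_0)V_0$. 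The same two-branch case analysis applied to $\sigma_0,\tilde\tau_0$ shows $n\tilde\tau_0/(\mu_g\sigma_0)\le 1$, so $V_0\le(\mu_g/\tilde\tau_0)D_\star$ and, extracting the primal piece, $\E\|x_\star-x_K\|^2\le(\tilde\tau_K/\tilde\tau_0)^2 D_\star$.

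To close the argument, $\tilde\tau_{k+1}^{-1}=\tilde\tau_k^{-1}\sqrt{1+\tilde\tau_k}$ combined with the elementary bound $\sqrt{1+t}\ge 1+t/3$ valid for $t\in[0,1]$ (applicable since $\tilde\tau_k\le\tilde\tau_0\le 1/n$) gives $\tilde\tau_K^{-1}\ge\tilde\tau_0^{-1}+K/3$. Hence $(\tilde\tau_K/\tilde\tau_0)^2 D_\star\le\varepsilon$ is implied by $1+K\tilde\tau_0/3\ge\sqrt{D_\star/\varepsilon}$, which holds as soon as $K\ge 3\sqrt{D_\star/\varepsilon}/\tilde\tau_0=3n\max(1,\max_i\|A_i\|/\mu_g)\sqrt{D_\star/\varepsilon}$, recovering \eqref{eq:K5}. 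The stated expected complexity then follows by multiplying $K$ by the expected per-iteration cost $\nnz(A)/n$ from \Cref{eq: sparse_cost_rem} and adding the one-off $\nnz(A)$ needed to form $A^\top y_0$.

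The principal obstacle is the first step: verifying that the same intricate $\tau_k^{(j)}$ \emph{simultaneously} forces the dual residual cancellation under the invariant $\sigma_k\tilde\tau_k=\sigma_0\tilde\tau_0$ and collapses the $j$-dependent weights $((\tau_k^{(j)})^{-1}+\mu_g)(\pi^{(j)})^{-1}$ and $((\tau_k^{(j)})^{-1}+\mu_g)(\pi^{(j)})^{-1}-\mu_g$ to the coordinate-free expressions $\mu_g(1+\tilde\tau_k)/\tilde\tau_k$ and $\mu_g/\tilde\tau_k$ that make the Chambolle--Pock-style telescoping go through. Once these two reductions are in hand, the rest of the argument is a direct accelerated-rate telescoping.
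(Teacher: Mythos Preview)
Your proposal is correct and follows essentially the same route as the paper: apply \Cref{lem: sparse_one_it} at the saddle point with $\mu_h=0$, verify the dual-residual cancellation via the invariant $\sigma_k\tilde\tau_k=\sigma_0\tilde\tau_0$, reduce the weighted norms to the scalar multiples $\mu_g/\tilde\tau_k$ and $\mu_g(1+\tilde\tau_k)/\tilde\tau_k$, and telescope using $c_k=\sqrt{1+\tilde\tau_k}=\tilde\tau_k/\tilde\tau_{k+1}=\sigma_{k+1}/\sigma_k$. Your treatment is in fact slightly cleaner in two places---you compute the exact cancellation condition $n\sigma_k\tilde\tau_k\|A_i\|^2\le\mu_g(1+\tilde\tau_k)$ rather than the paper's cruder sufficient condition, and you bound $V_0\le(\mu_g/\tilde\tau_0)D_\star$ in one stroke via $n\tilde\tau_0/(\mu_g\sigma_0)\le1$ instead of separately estimating the two coefficients---but these are cosmetic differences, not a genuinely different argument.
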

\begin{proof} 
First, we note from \eqref{eq:is9} that $\tilde\tau_k \in (0,1/n]$ for all $k$ and that  $\pi^{(j)}  \in [1/n,1]$ for all $j$. Thus, the denominator in the definition of $\tau_k^{(j)}$ satisfies 
\[
\mu_g\pi^{(j)} - \mu_g(1-\pi^{(j)})\tilde \tau_k \ge \mu_g\left[\frac{1}{n}- \left(1-\frac{1}{n}\right)\frac{1}{n} \right] > \mu_g \left[ \frac{1}{n} - \frac{1}{n} \right] = 0,
\]
so we have $\tau_k^{(j)} >0$ for all $j,k$.
By rearranging the definition of $\tau_k^{(j)}$, we obtain
\begin{align}\label{eq: tildetau_from_tau}
    \tilde \tau_k = \frac{\mu_g \pi^{(j)} \tau_{k}^{(j)}}{1+(1-\pi^{(j)})\mu_g\tau_{k}^{(j)}}.
\end{align}
We use Lemma~\ref{lem: sparse_one_it} with $\mu_h=0$, $\mu_g > 0$, $\theta_k^{(j)}$ defined above, with $y_\star$ being any dual solution, and the nonnegativity of $G(\bar{x}_{k+1},\bar{y}_{k+1},x_\star,y_\star)$ to write
\begin{equation}\label{eq: acc_rate_eq1}
\begin{aligned}
& \mathbb{E}_k \| x_\star-x_{k+1} \|^2_{(\Tau_k^{-1} + \mu_g)\Pi^{-1}} + \frac{n}{\sigma_k}\mathbb{E}_k\| y_\star- y_{k+1}\|^2 \\
& \leq \| x_\star-x_k\|^2_{(\Tau_k^{-1} + \mu_g)\Pi^{-1} - \mu_g}+\frac{n}{\sigma_k}\| y_\star-y_k\|^2 
- \frac{1}{\sigma_k}\| \bar y_{k+1} - y_k\|^2 + \|\bar y_{k+1} - y_k\|^2_{M_y((\Tau_k^{-1} + \mu_g)\Pi^{-1})}.
\end{aligned}
\end{equation}
By using the definition of $M_y$ from Lemma~\ref{lem: sparse_one_it} and the definition of $\theta_k^{(j)}$ from \eqref{eq:is9}, similar to~\eqref{eq: som_ss_cond}, we obtain that the final two terms on the RHS will combine to a nonpositive result if 
\begin{equation}\label{eq: wp3}
    -\frac{1}{\sigma_k} + \sum_{j=1}^d \frac{1+\mu_g\tau_k^{(j)}}{n\pi^{(j)}}\tau_k^{(j)}(\theta_k^{(j)})^2 A_{i, j}^2 \leq 0 \iff
    - \frac{1}{\sigma_k} + \sum_{j=1}^d \frac{n\pi^{(j)}}{1+\mu_g\tau_k^{(j)}}\tau_k^{(j)} A_{i, j}^2 \leq 0
\end{equation}
A sufficient condition for this inequality is 
\begin{equation}\label{eq: scc_ss_cond}
    -\frac{1}{\sigma_k} + \sum_{j=1}^d \frac{n\pi^{(j)}}{1+(1-\pi^{(j)})\mu_g\tau_{k}^{(j)}}\tau_{k}^{(j)} A_{i, j}^2 \leq 0 \iff \frac{\sigma_k \tilde\tau_k n}{\mu_g} \|A_i\|^2 \leq 1,
\end{equation}
where the equivalence follows from \eqref{eq: tildetau_from_tau}.
By the definitions \eqref{eq:is9}, we have
\[
\tilde\tau_{k+1} = \frac{\tilde\tau_k}{\sqrt{1+\tilde\tau_k}} = \frac{\tilde\tau_k}{\sigma_{k+1}/\sigma_k} \Longrightarrow \tilde\tau_{k+1} \sigma_{k+1} = \tilde\tau_k \sigma_k,
\]
so that $\tilde\tau_k \sigma_k = \tilde\tau_0 \sigma_0$ for all $k$.
Since $\tilde \tau_0 \leq \frac{\mu_g}{n\max_i\|A_i\|}$ and $\sigma_0 = \frac{1}{\max_i \|A_i\|}$, we have that~\eqref{eq: scc_ss_cond} and consequently~\eqref{eq: wp3} is satisfied.
Thus we can drop the final two terms in the RHS of~\eqref{eq: acc_rate_eq1} and write
\begin{equation}\label{eq: acc_rate_eq1.5}
\mathbb{E}_k \| x_\star-x_{k+1} \|^2_{(\Tau_k^{-1} + \mu_g)\Pi^{-1}} + \frac{n}{\sigma_k}\mathbb{E}_k\| y_\star- y_{k+1}\|^2 \leq \| x_\star-x_k\|^2_{(\Tau_k^{-1} + \mu_g)\Pi^{-1} - \mu_g}+\frac{n}{\sigma_k}\| y_\star-y_k\|^2.
\end{equation}
By the definition in~\eqref{eq: tildetau_from_tau}, we have
\[
\begin{aligned}
(\Tau_k^{-1}+\mu_g I)\Pi^{-1}-\mu_g I & =(1+\mu_g\Tau_k)\Pi^{-1}\Tau_{k}^{-1}-\mu_g I \\
& =
\diag \left( \frac{1+\mu_g \tau_k^{(j)}}{\pi^{(j)} \tau_k^{(j)}} \right) - \mu_g I =
\diag \left( \frac{1+(1-\pi^{(j)}) \mu_g \tau_k^{(j)}}{\pi^{(j)} \tau_k^{(j)}} \right) = \frac{\mu_g}{\tilde\tau_k} I.
\end{aligned}
\]
Thus, we can write~\eqref{eq: acc_rate_eq1.5} as
\begin{equation}\label{eq: acc_rate_eq1.6}
\mathbb{E}_k \| x_\star-x_{k+1} \|^2_{(\Tau_k^{-1} + \mu_g)\Pi^{-1}} + \frac{n}{\sigma_k}\mathbb{E}_k\| y_\star- y_{k+1}\|^2 \leq\frac{\mu_g}{\tilde \tau_k} \| x_\star-x_k\|^2+\frac{n}{\sigma_k}\| y_\star-y_k\|^2.
\end{equation}
Now define 
\begin{equation}
    \label{eq:defck}
    c_k := \sqrt{1+\tilde\tau_k} = \frac{\sigma_{k+1}}{\sigma_k} = \frac{\tilde\tau_k}{\tilde\tau_{k+1}}.
\end{equation}
In~\eqref{eq: acc_rate_eq1.5}, we want to replace the first term on the LHS by 
\begin{equation} \label{eq:is4}
\| x_\star - x_{k+1 \|^2_{(\Tau_k^{-1} + \mu_g)\Pi^{-1}}} = c_k \frac{\mu_g}{\tilde\tau_{k+1}} \| x_\star - x_{k+1} \|^2.
\end{equation}
This equivalence  follows from (see~\eqref{eq: tildetau_from_tau})
\begin{align*}
(\Tau_k^{-1} + \mu_g)\Pi^{-1} = \diag \left( \frac{1+\mu_g \tau_k^{(j)}}{\tau_k^{(j)} \pi^{(j)}} \right) & =
\diag \left( \mu_g + \frac{1+(1-\pi^{(j)})\mu_g \tau_k^{(j)}}{\pi^{(j)} \tau_k^{(j)}} \right) \\
& =
\mu_g \left( 1+\frac{1}{\tilde\tau_k} \right) I =
\mu_g \frac{\tilde\tau_k+1}{\tilde\tau_k} I = \mu_g \frac{c_k}{\tilde\tau_{k+1}} I.
\end{align*}
By substituting \eqref{eq:is4} into \eqref{eq: acc_rate_eq1.6}, using $c_k = \sigma_{k+1}/\sigma_k$ from \eqref{eq:defck}, and taking total expectation on both sides, we obtain
\begin{equation*}
    c_k\mathbb{E} \left[ \frac{\mu_g}{\tilde\tau_{k+1}}\| x_\star-x_{k+1} \|^2 +\frac{n}{\sigma_{k+1}}\|y_\star-y_{k+1}\|^2\right] \leq \mathbb{E}\left[\frac{\mu_g}{\tilde\tau_{k}}\| x_\star-x_{k} \|^2 +\frac{n}{\sigma_{k}}\|y_\star-y_{k}\|^2\right].
\end{equation*}
Using \eqref{eq:defck} again, we have
\begin{equation*}
    \frac{1}{\tilde \tau_{k+1}}\mathbb{E} \left[ \frac{\mu_g}{\tilde\tau_{k+1}}\| x_\star-x_{k+1} \|^2 +\frac{n}{\sigma_{k+1}}\|y_\star-y_{k+1}\|^2\right] \leq \frac{1}{\tilde \tau_{k}}\mathbb{E}\left[\frac{\mu_g}{\tilde\tau_{k}}\| x_\star-x_{k} \|^2 +\frac{n}{\sigma_{k}}\|y_\star-y_{k}\|^2\right].
\end{equation*}
By iterating the inequality for $k=0,1,\dotsc,K-1$, we obtain
\begin{equation*}
    \frac{1}{\tilde \tau_K}\mathbb{E}\left[\frac{\mu_g}{\tilde\tau_{K}} \| x_\star-x_{K} \|^2 +\frac{n}{\sigma_K}\|y_\star-y_{K}\|^2\right] \leq \frac{1}{\tilde \tau_0}\left(\frac{\mu_g}{\tilde\tau_{0}}\| x_\star-x_{0} \|^2 + \frac{n}{\sigma_0}\|y_\star-y_{0}\|^2\right).
\end{equation*}
Next, we drop the second term on LHS and multiply both sides by $\tilde \tau_K^2/\mu_g$ to obtain
\begin{equation}\label{eq: scc_so_eq1}
    \mathbb{E}\| x_\star-x_{K} \|^2 \leq \left( \frac{\tilde\tau_K^2}{\mu_g\tilde \tau_0} \right)\left(\frac{\mu_g}{\tilde\tau_{0}}\| x_\star-x_{0} \|^2 + \frac{n}{\sigma_0}\|y_\star-y_{0}\|^2\right).
\end{equation}
Since $\tilde\tau_k\leq \tilde \tau_0 \leq 1$, we will estimate as in \cite{chambolle2011first}~(see~\Cref{lem: acc_rate_seq}) to deduce that $\tilde\tau_K \ge 3/K$ for all $K>0$ and hence
$\tilde\tau_K^2 \leq \frac{9}{K^2}$.
We thus have from~\eqref{eq: scc_so_eq1} that
\begin{equation}\label{eq: ge2}
    \mathbb{E}\| x_\star-x_{K} \|^2 \leq \frac{9}{K^2} \left(\frac{1}{\tilde\tau_{0}^2}\| x_\star-x_{0} \|^2 + \frac{n}{\mu_g\sigma_0\tilde\tau_0}\|y_\star-y_{0}\|^2\right).
\end{equation}
We now derive upper bounds on the coefficients in the RHS of this bound. 
By our choice of $\sigma_0$ and $\tilde \tau_0$ in~\eqref{eq:is9}, we have
\[
    \frac{1}{\tilde \tau_0^2} = \frac{1}{\left(\min\left( \frac{1}{n}, \frac{\mu_g}{n\max_i\|A_i\|} \right)\right)^2} = n^2 \max\left( 1, \frac{\max_i\|A_i\|^2}{\mu_g^2} \right). 
\]
and
\begin{align*}
    \frac{n}{\mu_g \sigma_0 \tilde \tau_0} = \frac{n\max_i\|A_i\|}{\mu_g \min\left( \frac{1}{n}, \frac{\mu_g}{n\max_i\|A_i\|} \right)} 
    & = \frac{n\max_i\|A_i\|}{\mu_g} \max\left( n, \frac{n\max_i\|A_i\|}{\mu_g} \right) \\
    & = n^2 \max \left( \frac{\max_i\|A_i\|}{\mu_g},
    \frac{\max_i\|A_i\|^2}{\mu_g^2} \right)
    \le 
    n^2 \max \left( 1,
    \frac{\max_i\|A_i\|^2}{\mu_g^2} 
    \right).
\end{align*}
By substituting these bounds in \eqref{eq: ge2}, we obtain
\begin{equation} \label{eq:wh7}
    \mathbb{E}\| x_\star - x_K \|^2\leq \frac{9n^2}{K^2}  \max\left( 1, 
    \frac{\max_i \|A_i\|^2}{\mu_g^2} \right) D_{\star}.
\end{equation}
Thus to ensure that $\mathbb{E}\| x_\star - x_K \|^2 \le \varepsilon$, it is sufficient by \eqref{eq:wh7} that \eqref{eq:K5} holds. We obtain the result for complexity by multiplying this bound on $K$ by the expected number of operations per iteration, which is $O(\nnz(A)/n)$ (see \Cref{eq: sparse_cost_rem})  and also accounting for the additional $\nnz(A)$ cost of computing $A^\top y_0$.
\end{proof}

\subsection{Analysis for the Convex-Strongly Concave Case}

We deal now with the case in which $h^*$ is strongly convex while $g$ is only convex.
The following theorem establishes the complexity result for PURE-CD from the second column of \Cref{tb: 3}
\begin{theorem}\label{th: thm_dual_sc}
Suppose that~\Cref{asmp: asmp1},~\ref{asmp: asmp2} hold and that $h^\ast$ is $\mu_h > 0$ strongly convex. Recall $\pi^{(j)} = \frac{| i\in I(j)|}{n}$. In \Cref{alg: purecd_sparse}, we set 
\begin{equation} \label{eq:is0}
\begin{split}
\tilde \tau_0 &= \frac{\mu_h}{n \max_i \|A_i\|^2},~~~~\tilde \sigma_0 = \frac{1}{2n-1}, ~~~~\Sigma_k =\sigma_k I,\\
 \tilde \tau_{k+1} &= \tilde \tau_k\sqrt{1+\tilde \sigma_k},~~~~\tau_k^{(j)} = \frac{\tilde \tau_k}{\pi^{(j)}} ~~~~\tilde \sigma_{k+1} = \frac{\tilde \sigma_k}{\sqrt{1+\tilde\sigma_k}},~~~~ \sigma_{k} = \frac{n\tilde\sigma_k}{\mu_h-(n-1)\mu_h\tilde\sigma_k},~~~~\theta_{k}^{(j)} =\pi^{(j)} n.
\end{split}
\end{equation}
Then we have that $\mathbb{E}\left[ \| y_{K} - y_\star \|^2 \right] \leq \varepsilon$ for
\begin{equation} \label{eq:K6}
K \ge 6n \sqrt{\frac{D_\star}{\varepsilon}} \max \left( \frac{\max_i \|A_i \|}{\mu_h},1 \right),
\end{equation}
with expected complexity
\begin{equation*}
{O}\left( \nnz(A) + \nnz(A) \sqrt{\frac{D_\star}{\varepsilon}} \max \left( \frac{\max_i \| A_i\|}{\mu_h},1 \right) \right),
\end{equation*}
where $D_\star$ is defined in \Cref{sec: notation}.
\end{theorem}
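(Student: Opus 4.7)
The structure of the proof will closely mirror Theorem~\ref{th: scc} (strongly convex--concave case), exploiting the primal--dual symmetry of the formulation. The starting point is \Cref{lem: sparse_one_it} specialized to $\mu_g=0$ and $\mu_h>0$. Under this specialization the lemma's choice $\Theta_k=n\Pi(I+\mu_g\Tau_k)^{-1}$ reduces to $\theta_k^{(j)}=n\pi^{(j)}$, which matches the theorem's prescription. Dropping the nonnegative term $G(\bar x_{k+1},\bar y_{k+1},x_\star,y_\star)$, the lemma yields
\[
\mathbb{E}_k\|x_\star-x_{k+1}\|^2_{\Tau_k^{-1}\Pi^{-1}}+\mathbb{E}_k\|y_\star-y_{k+1}\|^2_{(\Sigma_k^{-1}+\mu_h)n}
\le \|x_\star-x_k\|^2_{\Tau_k^{-1}\Pi^{-1}}+\|y_\star-y_k\|^2_{(\Sigma_k^{-1}+\mu_h)n-\mu_h}
-\|\bar y_{k+1}-y_k\|^2_{\Sigma_k^{-1}}+\|\bar y_{k+1}-y_k\|^2_{M_y(\Tau_k^{-1}\Pi^{-1})}.
\]
With $\tau_k^{(j)}=\tilde\tau_k/\pi^{(j)}$ and $\theta_k^{(j)}=n\pi^{(j)}$ one computes $M_y^{(i)}(\Tau_k^{-1}\Pi^{-1})=n\tilde\tau_k\|A_i\|^2$, so the last two terms combine to a nonpositive quantity provided $\sigma_k n\tilde\tau_k\max_i\|A_i\|^2\le 1$.

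The next step is to verify this step size condition throughout. Using $\sigma_k=\tfrac{n\tilde\sigma_k}{\mu_h(1-(n-1)\tilde\sigma_k)}$, the update rules give $\tilde\tau_{k+1}\tilde\sigma_{k+1}=\tilde\tau_k\tilde\sigma_k$, so this product is constantly equal to $\tilde\tau_0\tilde\sigma_0=\tfrac{\mu_h}{n(2n-1)\max_i\|A_i\|^2}$. Because $\tilde\sigma_k$ is monotonically decreasing from $\tilde\sigma_0=\tfrac{1}{2n-1}$, a short calculation shows the condition reduces to $\tilde\sigma_k\le\tfrac{1}{2n-1}$, which holds for every $k$. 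Dropping the nonpositive residual leaves
\[
\mathbb{E}_k\|x_\star-x_{k+1}\|^2_{\Tau_k^{-1}\Pi^{-1}}+\mathbb{E}_k\|y_\star-y_{k+1}\|^2_{(\Sigma_k^{-1}+\mu_h)n}
\le \|x_\star-x_k\|^2_{\Tau_k^{-1}\Pi^{-1}}+\|y_\star-y_k\|^2_{(\Sigma_k^{-1}+\mu_h)n-\mu_h}.
\]
Direct substitution of the definitions yields the clean identities $\Tau_k^{-1}\Pi^{-1}=\tilde\tau_k^{-1} I$, $(\Sigma_k^{-1}+\mu_h)n-\mu_h=\tfrac{\mu_h}{\tilde\sigma_k}I$, and $(\Sigma_k^{-1}+\mu_h)n=\mu_h\tfrac{1+\tilde\sigma_k}{\tilde\sigma_k}I$.

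Setting $V_k:=\tilde\tau_k^{-1}\|x_\star-x_k\|^2+\mu_h\tilde\sigma_k^{-1}\|y_\star-y_k\|^2$ and using $\tilde\tau_{k+1}=\tilde\tau_k\sqrt{1+\tilde\sigma_k}$ together with $\tilde\sigma_{k+1}=\tilde\sigma_k/\sqrt{1+\tilde\sigma_k}$, the previous display can be rewritten as $\sqrt{1+\tilde\sigma_k}\,\mathbb{E}_k V_{k+1}\le V_k$. Since $\sqrt{1+\tilde\sigma_k}=\tilde\sigma_k/\tilde\sigma_{k+1}$, this gives the telescoping bound $\mathbb{E}[V_{k+1}/\tilde\sigma_{k+1}]\le V_k/\tilde\sigma_k$ after taking total expectation. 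Iterating and dropping the nonnegative primal term yields $\mathbb{E}\|y_\star-y_K\|^2\le \tilde\sigma_K^2\cdot V_0/(\mu_h\tilde\sigma_0)$.

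The final step is to bound $\tilde\sigma_K$: since $\tilde\sigma_0\le 1$ and the recursion $\tilde\sigma_{k+1}=\tilde\sigma_k/\sqrt{1+\tilde\sigma_k}$ is of Chambolle--Pock type, \Cref{lem: acc_rate_seq} gives $\tilde\sigma_K\le 3/K$. Substituting the values of $\tilde\sigma_0=\tfrac{1}{2n-1}$, $\tilde\tau_0=\tfrac{\mu_h}{n\max_i\|A_i\|^2}$ gives
\[
\frac{V_0}{\mu_h\tilde\sigma_0}=\frac{n(2n-1)\max_i\|A_i\|^2}{\mu_h^2}\|x_\star-x_0\|^2+(2n-1)^2\|y_\star-y_0\|^2\le 4n^2\max\!\Big(\tfrac{\max_i\|A_i\|^2}{\mu_h^2},1\Big)D_\star,
\]
so $\mathbb{E}\|y_\star-y_K\|^2\le \tfrac{36 n^2}{K^2}\max(\max_i\|A_i\|/\mu_h,1)^2 D_\star$. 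The iteration bound \eqref{eq:K6} follows directly, and the complexity statement follows by multiplying by the expected per-iteration cost $\nnz(A)/n$ (\Cref{eq: sparse_cost_rem}), plus the $\nnz(A)$ cost for the initial computation of $A^\top y_0$. The main obstacle I anticipate is the bookkeeping for the step size condition: the nonlinear map between $\sigma_k$ and $\tilde\sigma_k$ hides the simple product-preservation $\tilde\tau_k\tilde\sigma_k=\text{const}$, and it is only after rewriting everything in the $\tilde{\phantom{.}}$ variables that the recursion reduces to the same form as the acceleration scheme of \cite{chambolle2011first}.
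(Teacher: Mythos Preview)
Your proposal is correct and follows essentially the same route as the paper: apply \Cref{lem: sparse_one_it} with $\mu_g=0$, verify the step size condition, rewrite the weighted norms in terms of the tilde variables, and telescope via $c_k=\sqrt{1+\tilde\sigma_k}$ before invoking \Cref{lem: acc_rate_seq}. Your verification of the step size condition via the exact invariance $\tilde\tau_k\tilde\sigma_k=\tilde\tau_0\tilde\sigma_0$ is in fact slightly cleaner than the paper's argument (which instead bounds $\tilde\tau_k\sigma_k\le\tilde\tau_0\sigma_0$ through a monotonicity computation on $\sigma_{k+1}/\sigma_k$), and your identity $(\Sigma_k^{-1}+\mu_h)n=\mu_h(1+\tilde\sigma_k)/\tilde\sigma_k$ is exactly what the paper derives as well (the paper states it as an inequality but it is in fact an equality).
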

\begin{proof} 
First, we note from~\eqref{eq:is0} that $\{\tilde\sigma_k\}$ is a positive decreasing sequence, so $\tilde \sigma_k \in \left(0, \frac{1}{2n-1} \right]$ for all $k$. 
As a result, we have for the denominator of $\sigma_k$ in~\eqref{eq:is0} that 
\begin{equation*}
    \mu_h - \mu_h(n-1)\tilde \sigma_k \geq  \mu_h\left[ 1-\frac{n-1}{2n-1} \right] > 0.
\end{equation*}
It follows  that $\sigma_k > 0$ for all $k$.
By rearranging the definition of $\sigma_k$ we obtain
\begin{equation}\label{eq: csc_tildesigma1}
\tilde \sigma_k = \frac{\sigma_k \mu_h}{n+(n-1)\mu_h\sigma_k}.
\end{equation}
We apply Lemma~\ref{lem: sparse_one_it} with $\mu_h > 0$, $\mu_g= 0$, $\theta^{(j)} = \pi^{(j)}n$ where $x_\star$ is any primal solution, and drop the nonnegative term $G(\bar{x}_{k+1},\bar{y}_{k+1},x_\star,y_\star)$ on the LHS to obtain
\begin{equation}\label{eq: csc_ineq1}
\begin{aligned}
& \mathbb{E}_k \| x_\star-x_{k+1} \|^2_{\Tau_k^{-1} \Pi^{-1}} + \frac{n(1+\sigma_k\mu_h)}{\sigma_k}\mathbb{E}_k\| y_\star- y_{k+1}\|^2 \\
& \leq \| x_\star-x_k\|^2_{\Tau_k^{-1} \Pi^{-1}}+\frac{n(1+\sigma_k\mu_h)-\sigma_k\mu_h}{\sigma_k}\| y_\star-y_k\|^2 
- \frac{1}{\sigma_k}\| \bar y_{k+1} - y_k\|^2 + \|\bar y_{k+1} - y_k\|^2_{M_y(\Tau_k^{-1}\Pi^{-1})}.
\end{aligned}
\end{equation}
To drop the last two terms in this inequality, we need, by the definition of $M_y$ in Lemma~\ref{lem: sparse_one_it}, that
\begin{equation}\label{eq: ss_cond_lp1}
    \frac{1}{\sigma_k} - \sum_{j=1}^d \frac{1}{n\pi^{(j)}}\tau_{k}^{(j)}(\theta_{k}^{(j)})^2 A_{i, j}^2 \geq 0 \iff
    \frac{1}{\sigma_k} - \sum_{j=1}^d n\pi^{(j)} \tau_{k}^{(j)} A_{i,j}^2 \geq 0 \iff \sigma_k \tilde \tau_k n \|A_i\|^2 \leq 1,
\end{equation}
where the first equivalence is due to the definition of $\theta^{(j)}_k$ and the second equivalence is by the definition of $\tilde \tau_k$ from~\eqref{eq:is0}.
We next write \eqref{eq: ss_cond_lp1} in terms of $\sigma_0$ and $\tilde \tau_0$.
We define
\begin{equation}
    \label{def:ck2}
    c_k:= \sqrt{1+\tilde\sigma_k} = \frac{\tilde\sigma_k}{\tilde\sigma_{k+1}} = \frac{\tilde\tau_{k+1}}{\tilde\tau_k} \ge 1.
\end{equation}
Similar to \cite[Thm. 5.1]{chambolle2018stochastic}, using \eqref{eq:is0}, we have
\[
\frac{\sigma_{k+1}}{\sigma_k} = \frac{n\tilde \sigma_{k+1}}{\mu_h-(n-1)\mu_h\tilde \sigma_{k+1}}\frac{\mu_h-(n-1)\mu_h\tilde \sigma_k}{n\tilde \sigma_k} =\frac{1}{c_k}\frac{1-(n-1)\tilde\sigma_{k}}{1-(n-1)\tilde \sigma_{k+1}}= \frac{1}{c_k}\frac{1-(n-1)\tilde\sigma_{k+1}c_k}{1-(n-1)\tilde \sigma_{k+1}}\leq\frac{1}{c_k},
\]
where the last inequality uses $c_k \ge 1$.
It follows from this inequality together with \eqref{def:ck2} that $\tilde \tau_{k} \sigma_{k} = \tilde \tau_{k-1}c_{k-1} \sigma_{k} \leq \tilde \tau_{k-1}c_{k-1} \frac{\sigma_{k-1}}{c_{k-1}} = \tilde\tau_{k-1}\sigma_{k-1}$, so by recursion we have $\tilde \tau_{k} \sigma_{k} \leq \tilde \tau_0 \sigma_0$ for all $k$. 
Therefore, \eqref{eq: ss_cond_lp1} is implied by
\[
\sigma_0 \tilde \tau_0 n \|A_i\|^2 \leq 1.
\]
We verify this condition using the definitions \eqref{eq:is0}:
\[
\sigma_0 \tilde \tau_0 n \|A_i\|^2  = \frac{n \tilde\sigma_0}{\mu_h - (n-1)\mu_h \tilde\sigma_0} \frac{\mu_h}{n \max_i \| A_i \|^2} n \| A_i \|^2 
\le \frac{n \tilde\sigma_0}{1-(n-1) \tilde\sigma_0} = \frac{\frac{n}{2n-1}}{1-\frac{n-1}{2n-1}} = 1,
\]
as required.

At this point we have established that \eqref{eq: ss_cond_lp1} holds, so we can drop the last two terms on~\eqref{eq: csc_ineq1} and write
\begin{equation} \label{eq:bs9}
\mathbb{E}_k \| x_\star-x_{k+1} \|^2_{\Tau_k^{-1} \Pi^{-1}} + \frac{n(1+\sigma_k\mu_h)}{\sigma_k}\mathbb{E}_k\| y_\star- y_{k+1}\|^2 \leq \| x_\star-x_k\|^2_{\Tau_k^{-1} \Pi^{-1}}+\frac{n(1+\sigma_k\mu_h)-\sigma_k\mu_h}{\sigma_k}\| y_\star-y_k\|^2.
\end{equation}
By the definition of $\tilde \tau_k$ from~\eqref{eq:is0}, we have that $\Tau_k^{-1}\Pi^{-1} = \tilde \tau_k^{-1} I$. We also have from \eqref{eq: csc_tildesigma1} that 
\[
\frac{n(1+\sigma_k\mu_h)-\sigma_k\mu_h}{\sigma_k} = \frac{\mu_h}{\tilde\sigma_k}.
\]
We can thus simplify \eqref{eq:bs9} as follows:
\begin{equation}\label{eq: jy3}
\frac{1}{\tilde \tau_{k}}\mathbb{E}_k \| x_\star-x_{k+1} \|^2 + \frac{n(1+\sigma_k\mu_h)}{\sigma_k}\mathbb{E}_k\| y_\star- y_{k+1}\|^2 \leq \frac{1}{\tilde \tau_k} \| x_\star-x_k\|^2+\frac{\mu_h}{\tilde \sigma_k}\| y_\star-y_k\|^2.
\end{equation}
We claim that  $\frac{n(1+\sigma_k\mu_h)}{\sigma_k}\| y_\star - y_{k+1} \|^2 \ge c_k \frac{\mu_h}{\tilde \sigma_{k+1}}\| y_\star - y_{k+1} \|^2$, which is a consequence of 
$\frac{c_k}{\tilde\sigma_{k+1}} \le \frac{n(1+\mu_h \sigma_k)}{\mu_h \sigma_k}$.
This bound follows from \eqref{def:ck2} and \eqref{eq: csc_tildesigma1}, since
\[
\frac{c_k}{\tilde\sigma_{k+1}}  = \frac{1+\tilde\sigma_k}{\tilde\sigma_k} =
1+\frac{1}{\tilde\sigma_k} = 1 + \frac{n+(n-1)\mu_h \sigma_k}{\mu_h \sigma_k} =
\frac{n(1+\mu_h \sigma_k)}{\mu_h \sigma_k}.
\]
By making this substitution into the second term on the LHS of \eqref{eq: jy3}, together with $\frac{1}{\tilde\tau_k} = \frac{c_k}{\tilde\tau_{k+1}}$ in the first term, we obtain 
\[
c_k \left[ \frac{1}{\tilde \tau_{k+1}}\mathbb{E}_k \| x_\star-x_{k+1} \|^2 + \frac{\mu_h}{\tilde\sigma_{k+1}} \mathbb{E}_k\| y_\star- y_{k+1}\|^2 \right] \leq \frac{1}{\tilde \tau_k} \| x_\star-x_k\|^2+\frac{\mu_h}{\tilde \sigma_k}\| y_\star-y_k\|^2.
\]
By taking total expectation on both sides, we obtain
\[
c_k\mathbb{E}\left[ \frac{1}{\tilde \tau_{k+1}} \| x_\star - x_{k+1} \|^2 + \frac{\mu_h}{\tilde \sigma_{k+1}} \| y_\star-y_{k+1} \|^2 \right] \leq  \mathbb{E}\left[\frac{1}{\tilde \tau_k}\| x_\star-x_k\|^2 + \frac{\mu_h}{\tilde \sigma_k} \| y_\star-y_k\|^2\right].
\]
By iterating the inequality and using $c_k = \frac{\tilde \sigma_k}{\tilde \sigma_{k+1}}$ from \eqref{def:ck2}, we obtain
\[
\frac{1}{\tilde \sigma_{K}}\mathbb{E}\left[ \frac{1}{\tilde \tau_{K}} \| x_\star - x_{K} \|^2 + \frac{\mu_h}{\tilde \sigma_{K}} \| y_\star-y_{K} \|^2 \right] \leq \frac{1}{\tilde \sigma_0} \left(\frac{1}{\tilde \tau_0} \| x_\star-x_0\|^2 + \frac{\mu_h}{\tilde \sigma_0} \| y_\star-y_0\|^2\right)
\]
which, after dropping the first term on LHS and multiplying both sides by $\frac{\tilde \sigma_K^2}{\mu_h}$, yields
\begin{align*}
\mathbb{E}\left[\| y_\star-y_{K} \|^2 \right] \leq \frac{\tilde \sigma_K^2}{\mu_h\tilde \sigma_0} \left(\frac{1}{\tilde \tau_0} \| x_\star-x_0\|^2 + \frac{\mu_h}{\tilde \sigma_0} \| y_\star-y_0\|^2\right).
\end{align*}
By applying Lemma~\ref{lem: acc_rate_seq} to $\tilde\sigma_k$, we have
$\tilde\sigma_K^2\leq9/K^2$ 
With $\tilde \sigma_0 = \frac{1}{2n-1} \ge \frac{1}{2n}$ and $\tilde \tau_0 = \frac{\mu_h}{n\max_i\|A_i\|^2}$, we have
\begin{align*}
\mathbb{E}\| y_\star - y_K\|^2 &\leq 
\tilde\sigma_K^2 \left( \frac{1}{\mu_h \tilde\sigma_0 \tilde\tau_0} \| x_\star - x_0 \|^2 + \frac{1}{\tilde\sigma_0^2} \| y_\star-y_0 \|^2 \right) \\
&= \frac{9}{K^2}\max \left( \frac{2n^2 \max_i \|A_i\|^2}{\mu_h^2}, 4n^2 \right) D_\star \le \frac{36n^2}{K^2} \max \left( \frac{\max_i \|A_i\|^2}{\mu_h^2},1 \right) D_\star.
\end{align*}
Thus, the condition \eqref{eq:K6} on $K$ is sufficient for $\mathbb{E}\| y_\star - y_K\|^2 \le \varepsilon$. We obtain the complexity by multiplying by the expected per-iteration cost of $\nnz(A)/n$ (see \Cref{eq: sparse_cost_rem}) and adding a cost of $\nnz(A)$ for the computation of $A^\top y_0$.
\end{proof}

\section{Conclusions}\label{sec: conclusions}
Our aim in this paper is to provide a comprehensive understanding of primal-dual coordinate methods for convex-concave min-max problems with bilinear coupling.
Our results complement the developments in~\cite{alacaoglu2020random} and~\cite{song2021variance} to show that with suitable analysis, we can match and improve the best-known complexities in a wide array of special cases, by using the relatively simple PURE-CD algorithm.
We conclude by highlighting an open question.

As we highlighted in Table~\ref{tb: 2}, our analysis in the sparse case does not provide an improved complexity guarantee for optimality measure $\mathbb{E} \Gap(\xout, \yout)$ compared to deterministic methods.
As we show in Thm.~\ref{th: thm_max_e}, it is relatively straightforward to prove this complexity for the quantity $\max_{x, y}\mathbb{E} G(\xout, \yout, x, y)$.
Moreover, as we showed in \Cref{sec: str_cvx_str_ccv} and~\ref{sec: str_cvx_ccv}, it is also possible to obtain improved complexity results depending on sparsity when there is strong convexity in the problem.
However the interplay of expectation and maximum in the sparse case seems to be a roadblock to prove the desired result for expected gap for solving convex-concave problems.
It is necessary to resolve this important question to complete our understanding of primal-dual coordinate methods that can adapt to sparsity in the data.

\section*{Acknowledgments}

Research of A. Alacaoglu was supported in part by NSF award 2023239 and DOE ASCR Subcontract 8F-30039 from Argonne National Laboratory.

Research of V. Cevher was supported in part by the European Research Council (ERC) under the European Union’s Horizon 2020 research and innovation programme (grant agreement no 725594 - timedata); the Swiss National Science Foundation (SNSF) under grant number 200021\_178865/1; the Department of the Navy, Office of Naval Research (ONR) under a grant number N62909-17-1-2111; and the Hasler Foundation Program: Cyber Human Systems (project number 16066).

Research of S. J. Wright was supported in part by NSF awards 1934612 and 2023239, Subcontract 8F-30039 from Argonne National Laboratory, and an AFOSR subcontract UTA20-001224 from UT-Austin. 
Part of this work was done while this author was visiting the Simons Institute for the Theory of Computing (Berkeley), the Hausdorff Institute of Mathematics (Bonn), and the Oden Institute for Computational Engineering and Sciences at UT-Austin.

\appendix

\section{Preliminary Lemmas}

Here we collect several technical results that are needed in the proofs of the main results.

The first lemma concerns the inequality resulting from the initialization. 
It is a standard one-iteration inequality for alternating gradient descent-ascent, but we include it here to make this report self-contained.
\begin{lemma}\label{lem: lem_init}
Given $x_1, y_1$ defined as~\eqref{eq: purecd_init}, and $\tau = \frac{1}{n\max_i\|A_i\|}$, $\underline \sigma = \frac{\gamma}{n\max_i\|A_i\|}$ for $\gamma < 1$, it holds for any $x, y$ that
\begin{multline*}
 h^\ast(y_1) -  \langle Ax, y_1 \rangle \leq  h^\ast(y) + g(x) -  g(x_1) -  \langle Ax_1, y \rangle - \frac{n\max_i\|A_i\|}{2 \gamma } \Big(\| y-y_1\|^2 - \| y-y_0 \|^2 \Big)\\
 - \frac{n\max_i\|A_i\|}{2}  \| x-x_1\|^2 +n\max_i\|A_i\| \| x-x_0\|^2.
\end{multline*}
Moreover, for any solution $(x_\star, y_\star)$, we have 
\begin{equation*}
    \| x_\star - x_1\|^2 + \frac{1}{\gamma} \| y_\star - y_1\|^2 \leq \frac{2}{\gamma}  \left( \| x_\star - x_0\|^2 + \| y_\star - y_0\|^2 \right) = \frac{2}{\gamma}  D_\star,
\end{equation*}
and
\begin{equation*}
    \| y_1 - y_0\|^2 \leq \frac{2}{1-\gamma}  \left( \| x_\star - x_0\|^2 + \| y_\star - y_0\|^2 \right) = \frac{2}{1-\gamma}  D_\star,
\end{equation*}
where $D_\star$ is defined in \eqref{eq:Dstar}.
\end{lemma}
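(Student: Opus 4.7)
The plan is to apply the standard prox-inequality \eqref{eq: prox_ineq} (with $\mu=0$) to the two proximal updates defining $x_1$ and $y_1$ in \eqref{eq: purecd_init}, then add the resulting inequalities and reorganize so that the left-hand side contains $2G(x_1,y_1,x,y)$. First, from the definition of $x_1$ together with the three-point identity $2\langle a,b\rangle = \|a+b\|^2-\|a\|^2-\|b\|^2$, one obtains
\[
\tfrac{1}{\tau}\|x-x_1\|^2 + 2(g(x_1)-g(x)) \le \tfrac{1}{\tau}\|x-x_0\|^2 - \tfrac{1}{\tau}\|x_1-x_0\|^2 + 2\langle A^\top y_0, x-x_1\rangle,
\]
and symmetrically for $y_1$, giving an inequality with $\tfrac{1}{\underline\sigma}$ in place of $\tfrac{1}{\tau}$ and the cross term $-2\langle Ax_1, y-y_1\rangle$. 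Adding them and adding/subtracting $2\langle Ax,y_1\rangle$, the aggregate cross term rewrites as $-2\langle Ax_1,y\rangle + 2\langle Ax,y_1\rangle + 2\langle A(x_1-x), y_1-y_0\rangle$, so the linear terms combine with $g(x_1)-g(x)+h^\ast(y_1)-h^\ast(y)$ into exactly $2G(x_1,y_1,x,y)$ by \eqref{eq: g_from_l}. This is the main algebraic manipulation.

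For the first claim, I would control the residual cross term $2\langle A(x-x_1), y_0-y_1\rangle$ by Young's inequality with parameter $\underline\sigma$, which exactly cancels the negative $-\tfrac{1}{\underline\sigma}\|y_1-y_0\|^2$ term and leaves $\underline\sigma\|A(x-x_1)\|^2$. Using $\|A\|^2\le n\max_i\|A_i\|^2$ together with the step size rules $\tfrac{1}{\tau}=n\max_i\|A_i\|$ and $\underline\sigma = \gamma/(n\max_i\|A_i\|)$ yields $\underline\sigma\|A\|^2 \le \gamma\max_i\|A_i\|$, and applying $\|x-x_1\|^2 \le 2\|x-x_0\|^2 + 2\|x_1-x_0\|^2$ allows this term to be absorbed into the $\tfrac{1}{\tau}$-weighted quadratics, leaving (after dividing by $2$) precisely the coefficients $\tfrac{n\max_i\|A_i\|}{2}\|x-x_1\|^2$ on the LHS and $n\max_i\|A_i\|\|x-x_0\|^2$ on the RHS (the factor of two appearing because we must double the $\|x-x_0\|^2$ budget to cancel the $\|x_1-x_0\|^2$ piece under $\gamma \le n/2$, which holds for $n\ge 2$, $\gamma<1$).

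The second claim follows immediately by setting $(x,y)=(x_\star,y_\star)$ in the first claim: the saddle-point property \eqref{eq:saddle} gives $G(x_1,y_1,x_\star,y_\star)=L(x_1,y_\star)-L(x_\star,y_1)\ge 0$, so that term can be dropped, and dividing through by $\tfrac{n\max_i\|A_i\|}{2}$ plus the bound $2\le 2/\gamma$ (since $\gamma\in(0,1)$) gives the claim in terms of $D_\star$. For the third claim, I would return to the combined prox inequality \emph{before} Young (so both $-\tfrac{1}{\tau}\|x_1-x_0\|^2$ and $-\tfrac{1}{\underline\sigma}\|y_1-y_0\|^2$ are still present), set $(x,y)=(x_\star,y_\star)$ and drop $G\ge 0$, then apply Young with parameter $\gamma$,
\[
2\langle A(x_\star-x_1), y_0-y_1\rangle \le \tfrac{\gamma}{\underline\sigma}\|y_0-y_1\|^2 + \tfrac{\underline\sigma}{\gamma}\|A(x_\star-x_1)\|^2.
\]
This leaves $\tfrac{1-\gamma}{\underline\sigma}\|y_1-y_0\|^2$ intact on the LHS, while $\tfrac{\underline\sigma}{\gamma}\|A\|^2 \le \max_i\|A_i\|$, so for $n\ge 2$ the coefficient $\tfrac{1}{\tau}-\max_i\|A_i\| = (n-1)\max_i\|A_i\|\ge 0$ allows the $\|x_\star-x_1\|^2$ and $\|x_1-x_0\|^2$ terms to be dropped. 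Multiplying through by $\tfrac{\underline\sigma}{1-\gamma}$ and using $\tfrac{\underline\sigma}{\tau(1-\gamma)} = \tfrac{\gamma}{1-\gamma}\le \tfrac{1}{1-\gamma}$ produces $\|y_1-y_0\|^2 \le \tfrac{D_\star}{1-\gamma}\le \tfrac{2D_\star}{1-\gamma}$.

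The main obstacle will be bookkeeping: the three claims need three slightly different Young splits (unweighted for the gap bound, with weight $\gamma$ for the $\|y_1-y_0\|^2$ bound) and each requires verifying that the step size formulas plus $n\ge 2$ leave enough positive quadratic budget after absorption. All other steps are straightforward applications of the prox-inequality and the saddle-point property.
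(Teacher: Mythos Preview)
Your proposal is correct and follows essentially the same route as the paper: apply the prox-inequality to both updates in \eqref{eq: purecd_init}, combine, identify the cross term $2\langle A(x_1-x), y_1-y_0\rangle$, and control it via Young together with $\|A\|^2\le n\max_i\|A_i\|^2$ and the triangle-type bound on $\|x-x_1\|^2$. The only cosmetic difference is that the paper uses a \emph{single} Young split (with weight $\sqrt{n}\max_i\|A_i\|$) that leaves a residual $-\tfrac{(1-\gamma)n\max_i\|A_i\|}{2\gamma}\|y_1-y_0\|^2$, so all three claims fall out of one master inequality, whereas you use two different Young weights (one that cancels $\|y_1-y_0\|^2$ exactly for the first claim, another with weight $\underline\sigma/\gamma$ for the third); both bookkeeping schemes work under $n\ge 2$.
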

\begin{proof}
By prox-inequality (see~\cref{eq: prox_ineq,eq: purecd_dual_ineq1}) applied on~\eqref{eq: purecd_init} for $y_1$, we have for any $y$ that
\begin{align*}
&\frac{1}{2\underline\sigma} \| y-y_1 \|^2 + h^\ast(y_1)- h^\ast(y) \leq \frac{1}{2\underline\sigma}\| y-y_0\|^2 - \langle Ax_1, y- y_{1} \rangle - \frac{1}{2\underline\sigma}  \|y_1-y_0\|^2 \\
\iff  &h^\ast(y_1) \leq  h^\ast(y) +  \langle Ax_1, y_1 - y \rangle
- \frac{1}{2\underline\sigma} \Big(\| y-y_1\|^2 - \| y-y_0 \|^2 + \|y_0-y_1\|^2\Big).
\end{align*}
By adding to both sides $-\langle Ax, y_1 \rangle$, we obtain
\begin{equation}\label{eq: agda_eq1}
 h^\ast(y_1) -  \langle Ax, y_1 \rangle \leq  h^\ast(y) +  \langle Ax_1, y_1 - y \rangle -  \langle Ax, y_1 \rangle \\
- \frac{1}{2\underline\sigma} \Big(\| y-y_1\|^2 - \| y-y_0 \|^2 + \|y_0-y_1\|^2\Big).
\end{equation}
Similarly, using prox-inequality (see~\cref{eq: prox_ineq,eq: purecd_primal_ineq1}) on~\eqref{eq: purecd_init} for $x_1$, we have for any $x$ that
\begin{align}
&\frac{1}{2\tau} \| x-x_1\|^2 + g(x_1) - g(x) \leq \frac{1}{2\tau} \| x-x_0\|^2 + \langle x - x_1, A^\top y_0 \rangle - \frac{1}{2\tau} \|x_1 -  x_0\|^2 \notag \\
 \iff &0 \leq  g(x) - g(x_1) -  \langle x_1 - x, A^\top y_0 \rangle - \frac{1}{2\tau} \Big( \| x-x_1\|^2 - \| x-x_0\|^2 + \| x_1 - x_0\|^2 \Big).\label{eq: agda_eq2}
\end{align}
When we sum~\eqref{eq: agda_eq1} and~\eqref{eq: agda_eq2}, we have
\begin{multline*}
h^\ast(y_1) -  \langle Ax, y_1 \rangle \leq  h^\ast(y) + g(x) -  g(x_1)
- \frac{1}{2\underline \sigma} \Big(\| y-y_1\|^2 - \| y-y_0 \|^2 + \|y_1-y_0\|^2\Big) \\
- \frac{1}{2\tau} \Big( \| x-x_1\|^2 - \| x-x_0\|^2 + \| x_1 - x_0\|^2 \Big)
+\langle Ax_1, y_1 - y \rangle - \langle Ax, y_1 \rangle - \langle x_1 - x, A^\top y_0 \rangle.
\end{multline*}
We next manipulate the inner products on the RHS as
\begin{align*}
    \langle Ax_1, y_1 - y \rangle - \langle Ax, y_1 \rangle - \langle x_1 - x, A^\top y_0 \rangle & = -\langle Ax_1, y \rangle + \langle A(x_1 - x), y_1 \rangle - \langle x_1 - x, A^\top y_0\rangle \\
    & = -\langle Ax_1, y \rangle + \langle A(x_1 - x), y_1 - y_0 \rangle.
\end{align*}
 We consequently have
\begin{multline}\label{eq: df3}
 h^\ast(y_1) -  \langle Ax, y_1 \rangle \leq  h^\ast(y) + g(x) -  g(x_1) -  \langle Ax_1, y \rangle
+ \langle A(x_1 - x), y_1 - y_0 \rangle \\
- \frac{1}{2\underline \sigma} \Big(\| y-y_1\|^2 - \| y-y_0 \|^2 + \|y_1-y_0\|^2\Big) - \frac{1}{2\tau} \Big( \| x-x_1\|^2 - \| x-x_0\|^2 + \| x_1 - x_0\|^2 \Big).
\end{multline}
For the inner product $ \langle A(x_1 - x), y_1 - y_0 \rangle$, we use $\|A\| \leq \sqrt{n}\max_i \|A_i\|$ and Young's inequality twice to write
\begin{align*}
     \langle A(x_1 - x), y_1 - y_0 \rangle &\leq \|A\|\| y_1 - y_0\|\|x_1 - x\| \\
    &\leq \sqrt{n}\max_i\|A_i\| \| y_1 - y_0\|\|x_1 - x\| \\
    &\leq  \frac{n\max_i\|A_i\|}{2} \| y_1 - y_0\|^2 + \frac{\max_i\|A_i\|}{2} \|x_1 - x\|^2 \\
    &\leq  \frac{n\max_i\|A_i\|}{2} \| y_1 - y_0\|^2 + \max_i\|A_i\| \|x_0 - x\|^2 + \max_i\|A_i\| \|x_0 - x_1\|^2.
\end{align*}
Since $n \geq 2$, we have that $\max_i\|A_i\| \leq \frac{n}{2} \max_i\|A_i\|$ and hence we have
\begin{align*}
     \langle A(x_1 - x), y_1 - y_0 \rangle \leq  \frac{n\max_i\|A_i\|}{2} \left( \| y_1 - y_0\|^2 + \|x_0 - x\|^2 + \|x_0 - x_1\|^2 \right).
\end{align*}
By substituting this bound into~\eqref{eq: df3} and using  $\tau = \frac{1}{n\max_i\|A_i\|}$, $\underline \sigma = \frac{\gamma}{n\max_i\|A_i\|}$, we obtain
\begin{multline*}
 h^\ast(y_1) -  \langle Ax, y_1 \rangle \leq  h^\ast(y) + g(x) -  g(x_1) -  \langle Ax_1, y \rangle \\
+\frac{n\max_i\|A_i\|}{2} \left( \| y_1 - y_0\|^2 + \|x_0 - x\|^2 + \|x_0 - x_1\|^2 \right) \\
- \frac{n\max_i\|A_i\|}{2 \gamma } \Big(\| y-y_1\|^2 - \| y-y_0 \|^2 + \|y_1-y_0\|^2\Big) - \frac{n\max_i\|A_i\|}{2} \Big( \| x-x_1\|^2 - \| x-x_0\|^2 + \| x_1 - x_0\|^2 \Big).
\end{multline*}
By combining the coefficients of $\| y_1-y_0 \|^2$, $\| x_1-x_0 \|^2$, and $\|x-x_0 \|^2$, we obtain
\begin{multline}\label{eq: gr4}
 h^\ast(y_1) -  \langle Ax, y_1 \rangle \leq h^\ast(y) + g(x) - g(x_1) -  \langle Ax_1, y \rangle - \frac{n\max_i\|A_i\|}{2 \gamma } \Big(\| y-y_1\|^2 - \| y-y_0 \|^2 \Big) \\
 - \frac{n\max_i\|A_i\|}{2}  \| x-x_1\|^2 +n\max_i\|A_i\| \| x-x_0\|^2 - \frac{(1-\gamma) n \max_i\|A_i\|}{2\gamma} \| y_1 - y_0\|^2.
\end{multline}
This gives the first result of the lemma after dropping the final term on the RHS.

For the other results, we substitute $(x, y) = (x_\star, y_\star)$ into~\eqref{eq: gr4} and rearrange to obtain
\begin{multline*}
    \left[g(x_1) + \langle Ax_1, y_\star \rangle - h^\ast(y_\star) - g(x_\star) - \langle Ax_\star, y_1 \rangle + h^\ast(y_1) \right] + \frac{n\max_i\|A_i\|}{2}\left( \| x_\star - x_1 \|^2 + \frac{1}{\gamma} \| y_\star - y_1 \|^2 \right) \\
 + \frac{(1-\gamma) n \max_i\|A_i\|}{2\gamma} \| y_1 - y_0\|^2 \leq \frac{n\max_i\|A_i\|}{2}\left( 2\| x_\star - x_0 \|^2 + \frac{1}{\gamma} \| y_\star - y_0 \|^2 \right).
\end{multline*}
We have that the terms inside the bracket in LHS is nonnegative due to the definition of a saddle point (see also~\eqref{eq: g_deff}).
We then divide both sides by $\frac{n\max_i\|A_i\|}{2}$ to get
\begin{align*}
     \| x_\star - x_1 \|^2 + \frac{1}{\gamma} \| y_\star - y_1 \|^2 
 + \frac{(1-\gamma)}{\gamma} \| y_1 - y_0\|^2
 & \leq  2\| x_\star - x_0 \|^2 + \frac{1}{\gamma} \| y_\star - y_0 \|^2 \\
 & \leq \frac{2}{\gamma}\left(\| x_\star - x_0 \|^2 + \| y_\star - y_0 \|^2\right) \le \frac{2}{\gamma} D_\star,
\end{align*}
where we have used $\gamma < 1$.
The second result of the lemma follows since the third term on the LHS is nonnegative.
For the last result, we use that the first two terms on the LHS are nonnegative and multiply both sides of the resulting inequality with $\frac{\gamma}{1-\gamma}$.
\end{proof}

Next, we give the lemma used to  decouple supremum and expectation, using ideas from~\cite{alacaoglu2019convergence,nemirovski2009robust}.
This lemma is in a slightly more general form to accommodate different forms of error terms, but is otherwise the same as in these earlier works~\cite[Lemmas~3.1 and 6.1]{nemirovski2009robust},~\cite[Lemma~4.8]{alacaoglu2019convergence}. 
\begin{lemma}\label{lem: exp_max_lemma}
Given $\underline k \geq 0$ and a fixed vector $u_{\underline k}\in\mathbb{R}^n$, let $P, \Sigma$ be as defined in Algorithms~\ref{alg: purecd_sparse} and \ref{alg: purecd_dense}, and suppose that  $v_{k+1} = (\bar y_{k+1}-y_k) - P^{-1} (y_{k+1}-y_k)$.
Let $\mathcal{U} \subseteq \mathbb{R}^n$ be any set. Then for all $K>\underline{k}$, we have
\begin{equation*}
\mathbb{E}\max_{u\in\mathcal{U}} \left[\sum_{k=\underline k}^{K-1}2\langle u, v_{k+1} \rangle_{\Sigma^{-1}} - \|  u_{\underline k} - u \|^2_{\Sigma^{-1}P^{-1}}\right] \leq \sum_{k=\underline k}^{K-1} \mathbb{E} \| y_k - y_{k+1} \|^2_{\Sigma^{-1}P^{-1}}.
\end{equation*}
\end{lemma}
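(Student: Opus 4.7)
The plan is to run the classical Nemirovski-style ``auxiliary sequence'' trick (cf.\ \cite{nemirovski2009robust}) adapted to the weighted inner product $\langle\cdot,\cdot\rangle_{\Sigma^{-1}}$ and with $v_{k+1}$ playing the role of a martingale difference. Concretely, define an auxiliary sequence $\{u_k\}_{k\ge\underline{k}}$ starting at the given $u_{\underline k}$ by the rule
\[
u_{k+1} \;=\; u_k + P v_{k+1}.
\]
The point of this choice is that $u_k$ is measurable with respect to the $\sigma$-algebra generated by $i_0,\dots,i_{k-1}$, while $\mathbb{E}_k v_{k+1}=0$ because $\mathbb{E}_k[P^{-1}(y_{k+1}-y_k)] = \bar y_{k+1}-y_k$ (each coordinate $i$ is selected with probability $p^{(i)}$, so the $P^{-1}$ factor debiases the one-sparse update).

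Next, I would derive a telescoping identity. Expanding $\|u_{k+1}-u\|^2_{\Sigma^{-1}P^{-1}}$, and using that $\Sigma$ and $P$ are diagonal so $\Sigma^{-1}P^{-1}\cdot P = \Sigma^{-1}$, gives
\[
\|u_{k+1}-u\|^2_{\Sigma^{-1}P^{-1}} \;=\; \|u_k-u\|^2_{\Sigma^{-1}P^{-1}} + 2\langle u_k-u, v_{k+1}\rangle_{\Sigma^{-1}} + \|v_{k+1}\|^2_{\Sigma^{-1}P}.
\]
Rearranging for $2\langle u, v_{k+1}\rangle_{\Sigma^{-1}}$, summing over $k=\underline k,\dots,K-1$, and absorbing the $u_{\underline k}$ term on one side while dropping the nonnegative $\|u_K-u\|^2_{\Sigma^{-1}P^{-1}}$ term on the other, I get the pointwise-in-$u$ bound
\[
\sum_{k=\underline k}^{K-1} 2\langle u, v_{k+1}\rangle_{\Sigma^{-1}} - \|u_{\underline k}-u\|^2_{\Sigma^{-1}P^{-1}}
\;\le\; \sum_{k=\underline k}^{K-1} 2\langle u_k, v_{k+1}\rangle_{\Sigma^{-1}} + \sum_{k=\underline k}^{K-1} \|v_{k+1}\|^2_{\Sigma^{-1}P}.
\]
The right-hand side is independent of $u$, so taking $\max_{u\in\mathcal{U}}$ on the left is immediate regardless of the choice of $\mathcal{U}$.

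Now I would take expectation. Using the tower property and the fact that $u_k$ is $\mathcal{F}_k$-measurable while $\mathbb{E}_k v_{k+1}=0$, each term $\mathbb{E}\langle u_k,v_{k+1}\rangle_{\Sigma^{-1}}$ vanishes. What remains is $\sum_{k=\underline k}^{K-1}\mathbb{E}\|v_{k+1}\|^2_{\Sigma^{-1}P}$, which I must compare with the stated RHS $\sum_k \mathbb{E}\|y_{k+1}-y_k\|^2_{\Sigma^{-1}P^{-1}}$. Writing $a_i := \bar y_{k+1}^{(i)} - y_k^{(i)}$ (which is $\mathcal{F}_k$-measurable), the one-sparse structure of $y_{k+1}-y_k$ yields by direct coordinate computation
\[
\mathbb{E}_k\|v_{k+1}\|^2_{\Sigma^{-1}P} = \sum_{i=1}^n \frac{1-p^{(i)}}{\sigma^{(i)}} a_i^2
\;\le\; \sum_{i=1}^n \frac{1}{\sigma^{(i)}} a_i^2
= \mathbb{E}_k\|y_{k+1}-y_k\|^2_{\Sigma^{-1}P^{-1}},
\]
which is the needed bound. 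Chaining all these pieces closes the proof.

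There is no serious obstacle here; the only thing to be careful about is the correct choice of the virtual step $u_{k+1}=u_k+Pv_{k+1}$ (so that the $P^{-1}$ and $P$ factors align between the weighted norm and the update) and bookkeeping with the diagonal matrices $\Sigma$ and $P$ commuting. The statement holds for arbitrary $\mathcal{U}\subseteq\mathbb{R}^n$ because the bound obtained before taking the max is pointwise in $u$ and the surviving right-hand side does not depend on $u$, so no projection of the auxiliary sequence into $\mathcal{U}$ is required.
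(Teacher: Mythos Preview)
Your proposal is correct and follows essentially the same approach as the paper: define the auxiliary sequence $u_{k+1}=u_k+Pv_{k+1}$, expand $\|u_{k+1}-u\|^2_{\Sigma^{-1}P^{-1}}$ to obtain a telescoping identity, drop $\|u_K-u\|^2_{\Sigma^{-1}P^{-1}}$, and use $\mathbb{E}_k v_{k+1}=0$ via the tower property. The only cosmetic difference is that where you do a direct coordinate computation to get $\mathbb{E}_k\|v_{k+1}\|^2_{\Sigma^{-1}P}=\sum_i\frac{1-p^{(i)}}{\sigma^{(i)}}a_i^2$, the paper invokes the variance inequality $\mathbb{E}\|X-\mathbb{E}X\|^2\le\mathbb{E}\|X\|^2$ with $X=P^{-1}(y_k-y_{k+1})$ in the $\Sigma^{-1}P$ norm; these are the same bound.
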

\begin{proof}
For $k \ge \underline{k}$, define $ u_{k+1} =  u_k + P v_{k+1}$. We have
\begin{equation*}
\| u_{k+1} - u \|^2_{\Sigma^{-1}P^{-1}} = \| u_k - u \|^2_{\Sigma^{-1}P^{-1}} + 2 \langle P v_{k+1}, u_k - u \rangle_{\Sigma^{-1}P^{-1}} + \| P v_{k+1} \|^2_{\Sigma^{-1}P^{-1}}.
\end{equation*}
By rearranging this inequality, summing from $k=\underline{k}, \dotsc,K-1$, telescoping, and dropping the term $\| u_K-u\|_{\Sigma^{-1}P^{-1}}^2$, we obtain
\begin{equation}\label{eq: ho5}
\sum_{k=\underline k}^{K-1} 2\langle v_{k+1}, u \rangle_{\Sigma^{-1}} - \| u_{\underline k} - u \|^2_{\Sigma^{-1}P^{-1}} \leq \sum_{k=\underline k}^{K-1} \| v_{k+1} \|^2_{\Sigma^{-1}P} + 2 \langle v_{k+1}, u_k \rangle_{\Sigma^{-1}}.
\end{equation}
We wish to bound the $\mathbb{E} \max_u$ of the LHS, which we can do by bounding the 
expectation of the RHS.

First, when we condition on knowing up to $y_k$, by construction $u_k$ is deterministic since it depends on $v_k$ and consequently $y_k$. We recall the definition of $\mathbb{E}_k$ as the conditional expectation defined in \Cref{sec: notation}.
We next have 
$\mathbb{E}_k [v_{k+1}] = 0$, since $\mathbb{E}_k [P^{-1}(y_{k+1} - y_k)] = \bar y_{k+1} - y_k$, we have
\begin{equation} 
\label{eq:hu3}
    \mathbb{E}_k \langle v_{k+1}, u_k \rangle_{\Sigma^{-1}} = 0.
\end{equation}
Second, we use $\mathbb{E} \| X - \mathbb{E} X \|^2 = \mathbb{E} \| X\|^2 - \| \mathbb{E} X \|^2 \leq \mathbb{E} \| X \|^2$ with $X = P^{-1}(y_k-y_{k+1})$ to obtain
\begin{align*}
\mathbb{E} \| v_{k+1} \|^2_{\Sigma^{-1}P} &= \mathbb{E} \| \bar y_{k+1} - y_k - P^{-1}(y_{k+1} - y_k) \|^2_{\Sigma^{-1}P} \\
& = \mathbb{E}\left[\mathbb{E}_k \| \bar y_{k+1} - y_k - P^{-1}(y_{k+1} - y_k) \|^2_{\Sigma^{-1}P}\right]  \\
&=\mathbb{E}\left[\mathbb{E}_k \| \mathbb{E}_k[P^{-1}( y_{k+1} - y_k)] - P^{-1}(y_{k+1} - y_k) \|^2_{\Sigma^{-1}P}\right] \\
& \leq \mathbb{E} \left[\mathbb{E}_k\| y_k-y_{k+1} \|^2_{\Sigma^{-1}P^{-1}}\right] \\
&=\mathbb{E} \| y_k-y_{k+1} \|^2_{\Sigma^{-1}P^{-1}}.
\end{align*}
We obtain the result by taking the expectation of the RHS in~\eqref{eq: ho5} and using  the bound just derived in combination with \eqref{eq:hu3}.
\end{proof}

The next lemma to compute the expectation of $\| x_{k+1} - x\|^2$ which is random when we use PURE-CD with sparsity.
This result is from~\cite[Lemma 2]{alacaoglu2020random}, but we include the proof for the sake of being self-contained.
\begin{lemma}{\cite[Lemma 2]{alacaoglu2020random}}\label{lem: tech_lemma_one_iter}
Let $x_{k+1}$ be computed as~\Cref{alg: purecd_sparse}, and let $\bar{y}_{k+1}$ be as defined in \eqref{eq:def.ybar}.
For any deterministic $x$ and $\Beta_k=\diag(\beta_k^{(1)},\dots,\beta_k^{(d)})\succ 0$, we have for any $k\geq 0$ that
\begin{multline}
\mathbb{E}_k \| x_{k+1} - x \|^2_{\Beta_k} = \| \bar x_{k+1} - x \|^2_{\Beta_k \Pi}  - \| x_k - x\|^2_{\Beta_k\Pi} + \|x_k - x\|^2_{\Beta_k} \\
-\frac{2}{n} \langle \Beta_k \Tau_k \Theta_k A^\top(\bar y_{k+1} - y_k), \bar x_{k+1} - x \rangle + \| \bar y_{k+1} - y_k\|^2_{M_y(\Beta_k)},
\end{multline}
 where $\pi^{(j)} = \frac{| i\in I(j)|}{n}$ and $M_y^{(i)}(\Beta_k) = n^{-1}\sum_{j=1}^d \beta_k^{(j)} (\tau_k^{(j)})^2 (\theta_k^{(j)})^2 A_{i, j}^2$.

Let $y_{k+1}$ be computed as \Cref{alg: purecd_sparse} or \Cref{alg: purecd_dense}. 
For any deterministic $y$ and $\Phi_k=\diag(\phi_k^{(1)}, \dots, \phi_k^{(n)})\succ 0$, we have for $k\geq 0$ (and for $k\geq 1$ in Sec.~\ref{sec: sif3}) that
\[
    \mathbb{E}_k \| y_{k+1} - y \|^2_{\Phi_k} = \|\bar y_{k+1} - y \|^2_{\Phi_k P} + \| y_k - y \|^2_{\Phi_k(I-P)}.
\]
Moreover, we have
\[
\mathbb{E}_k [h^\ast(y_{k+1})] = \sum_{i=1}^n p^{(i)} h_i^\ast(y_{k+1}^{(i)}) + \sum_{i=1}^n (1-p^{(i)}) h^\ast_i(y_k^{(i)}).
\]
\end{lemma}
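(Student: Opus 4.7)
The plan is to prove all three identities by a direct coordinate-by-coordinate computation based on the explicit update rules in \Cref{alg: purecd_sparse} (and \Cref{alg: purecd_dense}). The underlying observation in every part is that, conditional on the history, $\bar x_{k+1}$ and $\bar y_{k+1}$ are deterministic, and the only source of randomness is the index $i_k$.

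For the second identity (dual expectation), each dual coordinate $i$ satisfies $y_{k+1}^{(i)} = \bar y_{k+1}^{(i)}$ on the event $\{i_k = i\}$ (probability $p^{(i)}$) and $y_{k+1}^{(i)} = y_k^{(i)}$ otherwise. So
\begin{equation*}
\mathbb{E}_k (y_{k+1}^{(i)} - y^{(i)})^2 = p^{(i)}(\bar y_{k+1}^{(i)} - y^{(i)})^2 + (1-p^{(i)})(y_k^{(i)} - y^{(i)})^2,
\end{equation*}
and summing with weights $\phi_k^{(i)}$ yields the decomposition in $\Phi_k P$ and $\Phi_k(I-P)$. The third identity is the same computation applied coordinatewise to $h^*(y_{k+1}) = \sum_i h_i^*(y_{k+1}^{(i)})$, using separability of $h^*$.

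The first identity is the real work. For each primal coordinate $j$, recall that $i_k \in I(j)$ iff $A_{i_k,j}\neq 0$ iff $j\in J(i_k)$, so \Cref{alg: purecd_sparse} gives
\begin{equation*}
x_{k+1}^{(j)} = \begin{cases}
\bar x_{k+1}^{(j)} - \tau_k^{(j)}\theta_k^{(j)} A_{i_k,j}\bigl(\bar y_{k+1}^{(i_k)}-y_k^{(i_k)}\bigr), & i_k\in I(j),\\
x_k^{(j)}, & i_k\notin I(j),
\end{cases}
\end{equation*}
where I used $y_{k+1}^{(i_k)}-y_k^{(i_k)} = \bar y_{k+1}^{(i_k)}-y_k^{(i_k)}$. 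Under the uniform distribution $p^{(i)}=1/n$, the event $\{i_k=i\}$ for $i\in I(j)$ has probability $1/n$, and the residual event $\{i_k\notin I(j)\}$ has probability $1-\pi^{(j)}$ with $\pi^{(j)}=|I(j)|/n$. Expanding $(x_{k+1}^{(j)}-x^{(j)})^2$ and taking $\mathbb{E}_k$ produces four kinds of terms: a weighted square of $\bar x_{k+1}^{(j)}-x^{(j)}$ with coefficient $\pi^{(j)}$, a weighted square of $x_k^{(j)}-x^{(j)}$ with coefficient $1-\pi^{(j)}$, a cross term, and a quadratic residual term. For the cross term I use that $\sum_{i\in I(j)} A_{i,j}(\bar y_{k+1}^{(i)}-y_k^{(i)}) = (A^\top(\bar y_{k+1}-y_k))^{(j)}$ because summands with $i\notin I(j)$ vanish.

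After multiplying by $\beta_k^{(j)}$ and summing over $j$, the first two groups combine into $\|\bar x_{k+1}-x\|^2_{\Beta_k\Pi}+\|x_k-x\|^2_{\Beta_k(I-\Pi)} = \|\bar x_{k+1}-x\|^2_{\Beta_k\Pi}-\|x_k-x\|^2_{\Beta_k\Pi}+\|x_k-x\|^2_{\Beta_k}$, the cross term assembles into $-\frac{2}{n}\langle \Beta_k\Tau_k\Theta_k A^\top(\bar y_{k+1}-y_k),\,\bar x_{k+1}-x\rangle$, and the quadratic residual, after swapping the order of summation, becomes $\sum_i (\bar y_{k+1}^{(i)}-y_k^{(i)})^2\cdot\tfrac{1}{n}\sum_j\beta_k^{(j)}(\tau_k^{(j)})^2(\theta_k^{(j)})^2 A_{i,j}^2 = \|\bar y_{k+1}-y_k\|^2_{M_y(\Beta_k)}$, matching the definition of $M_y^{(i)}(\Beta_k)$ in the statement. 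The only difficulty in the whole argument is bookkeeping: keeping track of which coordinates are frozen versus updated in terms of $I(j)$ versus $J(i_k)$, and interchanging the two summations in the residual term. There is no analytic subtlety.
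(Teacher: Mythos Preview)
Your proposal is correct and follows essentially the same approach as the paper: a direct coordinate-wise computation exploiting that $\bar x_{k+1},\bar y_{k+1}$ are $\mathbb{E}_k$-deterministic, the case split on $j\in J(i_k)$ versus $j\notin J(i_k)$ (equivalently $i_k\in I(j)$), and the same index-swap $\sum_{i}\sum_{j\in J(i)}=\sum_{j}\sum_{i\in I(j)}$ to assemble the cross and residual terms. The only cosmetic difference is that you organize the argument by fixing $j$ first and then averaging over $i_k$, whereas the paper first splits the sum by $J(i_k)$ and then takes $\mathbb{E}_k$; the computations are identical.
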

\begin{proof}
For convenience, recall the definitions $J(i)=\{ j\in[d]\colon A_{i, j} \neq 0 \}\text{~~and~~}I(j)=\{ i\in[n]\colon A_{i, j} \neq 0 \}$.

By using the update rule of $x_{k+1}$ in step~\ref{st: rg4} and expanding the square, we write
\begin{align}
\mathbb{E}_k \| x_{k+1} - x\|^2_{\Beta_k} &= \mathbb{E}_k\sum_{j=1}^d \beta_k^{(j)} \left( x_{k+1}^{(j)} - x^{(j)} \right)^2\notag \\
&= \mathbb{E}_k\sum_{j\in J(i_k)} \beta_k^{(j)} \left( \bar x_{k+1}^{(j)} - \tau_k^{(j)}\theta_k^{(j)}(A^\top(y_{k+1} - y_k))^{(j)} - x^{(j)} \right)^2 + \sum_{j\not\in J(i_k)} \beta_k^{(j)} \left( x_k^{(j)} - x^{(j)} \right)^2 \notag \\
&= \mathbb{E}_k\sum_{j\in J(i_k)} \beta_k^{(j)} \left( \bar x_{k+1}^{(j)} - x^{(j)} \right)^2 - \mathbb{E}_k\sum_{j\in J(i_k)} 2\beta_k^{(j)}\tau_k^{(j)}\theta_k^{(j)}(A^\top(y_{k+1} - y_k))^{(j)}(\bar x_{k+1}^{(j)} - x^{(j)}) \notag \\
&\quad+\mathbb{E}_k\sum_{j\in J(i_k)} \beta_k^{(j)}\left(\tau_k^{(j)}\theta_k^{(j)}(A^\top(y_{k+1} - y_k))^{(j)}\right)^2 + \mathbb{E}_k\sum_{j\not\in J(i_k)} \beta_k^{(j)} \left( x_k^{(j)} - x^{(j)} \right)^2.\label{eq: tech_lemma_ineq1}
\end{align}
We now estimate all the terms separately.
First, we have
\begin{multline}\label{eq: sparse_purecd_tech_eq1}
\mathbb{E}_k \sum_{j\in J(i_k)} \beta_k^{(j)}\left( \bar x_{k+1}^{(j)} - x^{(j)} \right)^2 = \sum_{i=1}^n \frac{1}{n} \sum_{j\in J(i)} \beta_k^{(j)}\left( \bar x_{k+1}^{(j)} - x^{(j)} \right)^2 = \sum_{j=1}^d \sum_{i\in I(j)} \frac{\beta_k^{(j)}}{n}\left( \bar x_{k+1}^{(j)} - x^{(j)} \right)^2 \\
= \sum_{j=1}^d \pi^{(j)}\beta_k^{(j)}\left( \bar x_{k+1}^{(j)} - x^{(j)} \right)^2
= \| \bar x_{k+1} - x \|^2_{\Beta_k\Pi},
\end{multline}
where the second step used $\sum_{i=1}^n \sum_{j\in J(i)}\alpha_{i, j} = \sum_{j=1}^d \sum_{i\in I(j)}\alpha_{i, j}$ for any sequence $(\alpha_{i, j})_{i\in[n], j\in[d]}$ since both sums goes over all the indices corresponding to nonzero elements of matrix $A$ and sums the values of $\alpha$ in these indices. The second to last step used the definition $\pi^{(j)} = \sum_{i\in I(j)} 1/n$.

We next note that since $y_{k+1} - y_k$ is one sparse for $k\geq 0$, and that $y_{k+1}^{(i_k)} = \bar y_{k+1}^{(i_k)}$, we have
\begin{equation*}
(A^\top(y_{k+1} - y_k))^{(j)} = (A^\top(\bar y_{k+1}^{(i_k)} - y_k^{i_k})e_{i_k})^{(j)} = A_{i_k, j} (\bar y_{k+1}^{(i_k)} - y_k^{(i_k)}).
\end{equation*}
We use the last equality to exchange the order of summations to derive the following:
\begin{align}
\nonumber
& \mathbb{E}_k\sum_{j\in J(i_k)} 2\beta_k^{(j)}\tau_k^{(j)}\theta_k^{(j)}(A^\top(y_{k+1} - y_k))^{(j)}(\bar x_{k+1}^{(j)} - x^{(j)}) \\
\nonumber
& = \mathbb{E}_k\sum_{j\in J(i_k)} 2\beta_k^{(j)}\tau_k^{(j)}\theta_k^{(j)}A_{i_k, j}(\bar y_{k+1}^{(i_k)} - y_k^{(i_k)})(\bar x_{k+1}^{(j)} - x^{(j)}) \\
\nonumber
& = 
\sum_{i=1}^n \sum_{j\in J(i)} \frac{1}{n} 2\beta_k^{(j)}\tau_k^{(j)}\theta_k^{(j)}A_{i, j}(\bar y_{k+1}^{(i)} - y_k^{(i)})(\bar x_{k+1}^{(j)} - x^{(j)}) \\
\nonumber
& = 
\sum_{i=1}^n \sum_{j=1}^d \frac{1}{n} 2\beta_k^{(j)}\tau_k^{(j)}\theta_k^{(j)}A_{i, j}(\bar y_{k+1}^{(i)} - y_k^{(i)})(\bar x_{k+1}^{(j)} - x^{(j)}) \\
\nonumber
&= \frac{2}{n} \sum_{j=1}^d \beta_k^{(j)}\tau_k^{(j)}\theta_k^{(j)} \left( \sum_{i=1}^n A_{i, j}(\bar y_{k+1}^{(i)} - y_k^{(i)}) \right) (\bar x_{k+1}^{(j)} - x^{(j)}) \\
& = \frac{2}{n} \langle \Beta_k \Tau_k \Theta_k A^\top(\bar y_{k+1} - y_k), \bar x_{k+1} - x \rangle,\label{eq: tech_lemma_ineq2}
\end{align}
where the third equality is due to $A_{i, j} = 0$ for $j\not \in J(i)$.

With a similar estimation to the last equality, we have
\begin{align*}
\nonumber
& \mathbb{E}_k\sum_{j\in J(i_k)} \beta_k^{(j)}\left(\tau_k^{(j)}\theta_k^{(j)}(A^\top(y_{k+1} - y_k))^{(j)}\right)^2 \\
\nonumber
& = \mathbb{E}_k\sum_{j\in J(i_k)} \beta_k^{(j)}\left(\tau_k^{(j)}\theta_k^{(j)}A_{i_k, j}(\bar y_{k+1}^{(i_k)} - y_k^{(i_k)})\right)^2 
\\
\nonumber
& = \sum_{i=1}^n \sum_{j\in J(i)}  \frac{1}{n}\beta_k^{(j)}\left(\tau_k^{(j)}\theta_k^{(j)}A_{i, j}(\bar y_{k+1}^{(i)} - y_k^{(i)})\right)^2 \\
\nonumber
& = \sum_{i=1}^n \sum_{j=1}^d  \frac{1}{n}\beta_k^{(j)}\left(\tau_k^{(j)}\theta_k^{(j)}A_{i, j}(\bar y_{k+1}^{(i)} - y_k^{(i)})\right)^2 \\
\nonumber
& = \sum_{i=1}^n \left( \sum_{j=1}^d  \frac{1}{n}\beta_k^{(j)}(\tau_k^{(j)})^2(\theta_k^{(j)})^2A_{i, j}^2 \right) (\bar y_{k+1}^{(i)} - y_k^{(i)})^2 \\
&= \sum_{i=1}^n M_y^{(i)}(\Beta_k) (\bar y_{k+1}^{(i)} - y_k^{(i)})^2= \| \bar y_{k+1} - y_k \|^2_{M_y(\Beta_k)},\label{eq: tech_lemma_ineq3}
\end{align*}
where we have defined $M_y^{(i)}(\Beta_k) = n^{-1}\sum_{j=1}^d \beta_k^{(j)} (\tau_k^{(j)})^2 (\theta_k^{(j)})^2 A_{i, j}^2$.

We finally derive
\begin{align}
\mathbb{E}_k \sum_{j\not\in J(i_k)} \beta_k^{(j)} (x_k^{(j)} - x^{(j)})^2 & =  \sum_{j=1}^d \beta_k^{(j)} (x_k^{(j)} - x^{(j)})^2 - \mathbb{E}_k \sum_{j\in J(i_k)} \beta_k^{(j)} (x_k^{(j)} - x^{(j)})^2 \\
& = \| x_k - x \|^2_{\Beta_k} - \| x_k - x\|^2_{\Beta_k\Pi}.\label{eq: tech_lemma_ineq4}
\end{align}
where the last step uses the same estimation in~\eqref{eq: sparse_purecd_tech_eq1} with the difference of having $x_k$ instead of $\bar x_{k+1}$.

We insert~\cref{eq: sparse_purecd_tech_eq1,eq: tech_lemma_ineq2,eq: tech_lemma_ineq3,eq: tech_lemma_ineq4} into~\eqref{eq: tech_lemma_ineq1} to deduce the first result.

For the second result, we have  (using general probabilities $p^{(i)}$ in the expectations $\mathbb{E}_k$) that
\begin{align*}
\mathbb{E}_k\| y_{k+1} - y \|^2_{\Phi_k} &= \mathbb{E}_k \sum_{i=1}^n \phi_k^{(i)} (y_{k+1}^{(i)} - y^{(i)})^2  \\
& = \mathbb{E}_k \left[ \phi_k^{(i_k)} (\bar y_{k+1}^{(i_k)}- y^{(i_k)})^2 + \sum_{i\neq i_k} \phi_k^{(i)} (y_{k}^{(i)}- y^{(i)})^2\right] \\
&= \mathbb{E}_k \left[ \phi_k^{(i_k)} (\bar y_{k+1}^{(i_k)}- y^{(i_k)})^2 - \phi_k^{(i_k)} (y_k^{(i_k)}- y^{(i_k)})^2 + \sum_{i=1}^n \phi_k^{(i)} (y_{k}^{(i)}- y^{(i)})^2\right] \\
&= \sum_{i=1}^n p^{(i)} \phi_k^{(i)} (\bar y_{k+1}^{(i)}- y^{(i)})^2 - \sum_{i=1}^n p^{(i)} \phi_k^{(i)} (y_{k}^{(i)}- y^{(i)})^2 + \sum_{i=1}^n \phi_k^{(i)} (y_k^{(i)}- y^{(i)})^2 \\
&= \| \bar y_{k+1}-y \|^2_{\Phi_k P} + \| y_k-y \|^2_{\Phi_k (I - P)}.
\end{align*}
The final equality is derived by using the same derivation since $h^\ast(y)=\sum_{i=1}^n h_i^\ast(y^{(i)})$.
\end{proof}

The following technical result is used in the proof of Theorem~\ref{th: rates_for_part1}, immediately after \eqref{eq: end_of_std} and is based on the ideas from~\cite{song2021variance}. 
\begin{lemma}\label{lem: lemma_swd21_trick}[based on~\cite{song2021variance}]
Let $x_1, y_1$ be computed as~\eqref{eq: purecd_init} and let $\lambda_0 = (n-1)\lambda_1 = 1$, and for $k\geq 1$, $\lambda_{k+1} = \min\{ 1, \lambda_k n/(n-1)\}$.
For all $K \ge 1$, define (as in Theorem~\ref{th: rates_for_part1}) the following averaged iterates:
\[
\Lambda_K = \sum_{k=0}^{K-1} \lambda_k, \quad 
y^{K} = \frac{n\lambda_{K-1} y_{K} + \sum_{k=1}^{K-2} (n\lambda_k - (n-1)\lambda_{k+1})y_{k+1}}{\Lambda_K}, \quad
x^{K} = \frac{\sum_{k=0}^{K-1} \lambda_k \bar x_{k+1}}{\Lambda_K}.
\]
Then we have
\begin{multline*}
\sum_{k=1}^{K-1} \lambda_k G(\bar x_{k+1}, \bar y_{k+1}, x, y)  \geq \Lambda_K G(x^K, y^K, x, y) +\sum_{k=1}^{K-1} \mathcal{E}_{k, 3}(x) \\
+ \frac{n\max_i\|A_i\|}{2 \gamma } \Big(\| y-y_1\|^2 - \| y-y_0 \|^2 \Big)
 + \frac{n\max_i\|A_i\|}{2}  \| x-x_1\|^2 -n\max_i\|A_i\| \| x-x_0\|^2,
\end{multline*}
where
$\mathcal{E}_{k, 3}(x) = \lambda_k\left(- \langle x, A^\top \bar y_{k+1} \rangle +  \langle x, A^\top(ny_{k+1} - (n-1) y_k) \rangle + h^\ast(\bar y_{k+1}) - nh^\ast(y_{k+1}) + (n-1) h^\ast(y_k)\right)$, as in  \eqref{eq: err_term3}.
\end{lemma}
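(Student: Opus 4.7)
The plan is to split $\lambda_k G(\bar x_{k+1}, \bar y_{k+1}, x, y)$ into a part that depends on $y$ only through $y_{k+1}$ and $y_k$, plus the error $\mathcal{E}_{k,3}(x)$, so that the $y$-dependent part can be Abel-summed. Comparing the definitions of $G$ (see~\eqref{eq: g_from_l}) and $\mathcal{E}_{k,3}(x)$ (see~\eqref{eq: err_term3}), a direct algebraic check gives
\[
\lambda_k\left[-\langle Ax,\bar y_{k+1}\rangle + h^\ast(\bar y_{k+1})\right] = \lambda_k\left[-\langle Ax, ny_{k+1} - (n-1)y_k\rangle + n h^\ast(y_{k+1}) - (n-1) h^\ast(y_k)\right] + \mathcal{E}_{k,3}(x).
\]
Summing $\lambda_k G$ then yields a primal piece $\sum_{k=1}^{K-1} \lambda_k[g(\bar x_{k+1}) + \langle A\bar x_{k+1}, y\rangle - h^\ast(y) - g(x)]$, a dual piece $\sum_{k=1}^{K-1} \lambda_k[-\langle Ax, ny_{k+1}-(n-1)y_k\rangle + nh^\ast(y_{k+1}) - (n-1)h^\ast(y_k)]$, and $\sum_{k=1}^{K-1}\mathcal{E}_{k,3}(x)$.

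For the primal piece, I will add and subtract the $k=0$ contribution. Since $\bar x_1 = x_1$ by~\eqref{eq: purecd_init} and $\lambda_0 = 1$, Jensen's inequality for the convex function $g$ together with the linearity in $\bar x_{k+1}$ of the coupling and the fact that $h^\ast(y) + g(x)$ does not depend on $k$ give
\[
\sum_{k=1}^{K-1}\lambda_k\bigl[g(\bar x_{k+1}) + \langle A\bar x_{k+1}, y\rangle - h^\ast(y) - g(x)\bigr] \ge \Lambda_K\bigl[g(x^K) + \langle Ax^K, y\rangle - h^\ast(y) - g(x)\bigr] + \bigl[h^\ast(y) + g(x) - g(x_1) - \langle Ax_1, y\rangle\bigr].
\]

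For the dual piece, I will reindex ($k' = k+1$ in the $y_{k+1}$ sum) to obtain the boundary term $n\lambda_{K-1} h^\ast(y_K)$, the ``wedge'' term $-(n-1)\lambda_1 h^\ast(y_1) = -h^\ast(y_1)$, and interior coefficients $[n\lambda_{k-1} - (n-1)\lambda_k]h^\ast(y_k)$ for $k=2,\dots,K-1$. The recursion $\lambda_k \le \lambda_{k-1} n/(n-1)$ ensures these interior coefficients are nonnegative, so Jensen's inequality on $h^\ast$ applies. A short calculation using $\lambda_0 = (n-1)\lambda_1 = 1$ confirms the weight identity
\[
n\lambda_{K-1} + \sum_{k=1}^{K-2}\bigl[n\lambda_k - (n-1)\lambda_{k+1}\bigr] = \Lambda_K,
\]
so the weighted $y$-average equals exactly $y^K$; the analogous linear rearrangement for the inner product is an identity and produces $-\Lambda_K\langle Ax, y^K\rangle + \langle Ax, y_1\rangle$.

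Combining the three pieces, all $\Lambda_K$-weighted terms assemble into $\Lambda_K G(x^K, y^K, x, y)$, and the leftover boundary terms simplify to $L(x, y_1) - L(x_1, y)$, giving
\[
\sum_{k=1}^{K-1}\lambda_k G(\bar x_{k+1}, \bar y_{k+1}, x, y) \ge \Lambda_K G(x^K, y^K, x, y) + \bigl[L(x, y_1) - L(x_1, y)\bigr] + \sum_{k=1}^{K-1} \mathcal{E}_{k,3}(x).
\]
The last step is to lower bound $L(x, y_1) - L(x_1, y)$ by the stated initialization quantities, which is exactly a rearrangement of the first displayed inequality in Lemma~\ref{lem: lem_init}. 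The only delicate point is the bookkeeping of Abel summation: verifying that the coefficients produced are both nonnegative (so Jensen applies to $h^\ast$) and match precisely those defining $y^K$; everything else is algebra and one invocation of the initialization lemma.
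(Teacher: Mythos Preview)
Your proposal is correct and follows essentially the same route as the paper: decompose $G$ via the definition of $\mathcal{E}_{k,3}$, Abel-sum the dual piece to expose the nonnegative coefficients $n\lambda_k-(n-1)\lambda_{k+1}$ and apply Jensen to $h^\ast$, apply Jensen to $g$ on the primal piece after inserting the $k=0$ term, and finish with the initialization bound of Lemma~\ref{lem: lem_init}. The only difference is cosmetic ordering---you package the leftover boundary terms as $L(x,y_1)-L(x_1,y)$ before invoking Lemma~\ref{lem: lem_init}, whereas the paper invokes that lemma first and applies Jensen for $g$ afterward.
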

\begin{proof}
We have by using the definitions of $G$ (from~\eqref{eq: g_from_l}) and $\mathcal{E}_{k, 3}(x)$ (above) that 
\begin{align}
\nonumber
& \sum_{k=1}^{K-1} \lambda_k G(\bar x_{k+1}, \bar y_{k+1}, x, y) \\
\nonumber
& =\sum_{k=1}^{K-1} \lambda_k \left( g(\bar x_{k+1}) + \langle A\bar x_{k+1}, y \rangle - g(x) \right) + \lambda_k \left( h^\ast(\bar y_{k+1}) - \langle Ax, \bar y_{k+1} \rangle - h^\ast(y) \right)\\
\nonumber
& =\sum_{k=1}^{K-1} \lambda_k \left( g(\bar x_{k+1}) + \langle A\bar x_{k+1}, y \rangle - g(x) \right) \\
& \quad + \sum_{k=1}^{K-1} [n\lambda_k h^\ast(y_{k+1}) - (n-1) \lambda_k h^\ast(y_k) - \langle Ax, n\lambda_k y_{k+1} - (n-1)\lambda_k y_k \rangle - \lambda_k h^\ast(y) + \mathcal{E}_{k, 3}(x)].
\label{eq: general_trick_lhs}
\end{align}
Next, we work with two summations on the RHS of \eqref{eq: general_trick_lhs}. As in ~\cite{song2021variance}, we have
\begin{align*}
\sum_{k=1}^{K-1} [n\lambda_k h^\ast(y_{k+1}) - (n-1) \lambda_k h^\ast(y_k)] &= n\lambda_{K-1} h^\ast(y_{K}) + \sum_{k=1}^{K-2} n\lambda_k h^\ast(y_{k+1}) - \sum_{k=1}^{K-1} (n-1)\lambda_k h^\ast(y_k) \\
&= n\lambda_{K-1} h^\ast(y_{K}) + \sum_{k=1}^{K-2} n\lambda_k h^\ast(y_{k+1}) - \sum_{k=0}^{K-2} (n-1)\lambda_{k+1} h^\ast(y_{k+1}) \\
&= n\lambda_{K-1} h^\ast(y_{K}) + \sum_{k=1}^{K-2} (n\lambda_k - (n-1)\lambda_{k+1}) h^\ast(y_{k+1}) -(n-1)\lambda_1 h^\ast(y_1).
\end{align*}
Developing this estimate further, we note that by definition of $\{ \lambda_k \}$, we have  $n\lambda_k - (n-1)\lambda_{k+1} \geq 0$ for $k\geq 1$.
Moreover, since $n\lambda_1 = \lambda_1+ \lambda_0$, we have $\Lambda_K = \sum_{k=0}^{K-1} \lambda_k =n\lambda_1 + \sum_{k=2}^{K-1} \lambda_k = n \lambda_{K-1} + \sum_{k=1}^{K-2} (n\lambda_k-(n-1) \lambda_{k+1})$.
Hence, we can use the definitions of $\Lambda_K$ and $y^K$ and convexity of $h^\ast$ to deduce that
\begin{equation}\label{eq: ke3}
    \sum_{k=1}^{K-1} [n\lambda_k h^\ast(y_{k+1}) - (n-1) \lambda_k h^\ast(y_k)] \geq \Lambda_K h^\ast(y^K) - (n-1)\lambda_1 h^\ast(y_1).
\end{equation}
By the same derivation, we have 
\begin{align*}
\sum_{k=1}^{K-1} \langle Ax, n\lambda_k y_{k+1}-(n-1)\lambda_{k} y_k \rangle = n\lambda_{K-1} \langle Ax, y_{K}\rangle + \sum_{k=1}^{K-2} \langle Ax, (n\lambda_k - (n-1)\lambda_{k+1})y_{k+1} \rangle - (n-1)\lambda_1\langle Ax, y_1 \rangle.
\end{align*}
By using the definitions of $\Lambda_K$ and $y^K$, we can further develop this estimate to obtain
\begin{align}
\sum_{k=1}^{K-1} \langle Ax, n\lambda_k y_{k+1}-(n-1)\lambda_{k} y_k \rangle &= \langle Ax, n\lambda_{K-1} y_{K} \rangle + \sum_{k=1}^{K-2} \langle Ax, (n\lambda_k - (n-1)\lambda_{k+1})y_{k+1} \rangle  - (n-1)\lambda_1\langle Ax, y_1 \rangle \notag \\
&=\Lambda_K \langle Ax,y^K \rangle - (n-1)\lambda_1 \langle Ax,y_1 \rangle.\label{eq: ke4}
\end{align}
By using~\eqref{eq: ke3} and \eqref{eq: ke4} in~\eqref{eq: general_trick_lhs}, we have 
\begin{multline}\label{eq: se3}
\sum_{k=1}^{K-1} \lambda_k G(\bar x_{k+1}, \bar y_{k+1}, x, y) \ge \sum_{k=1}^{K-1} \lambda_k \left( g(\bar x_{k+1}) + \langle A\bar x_{k+1}, y \rangle - g(x) \right)   - \sum_{k=1}^{K-1} \lambda_k h^\ast(y) +\Lambda_K h^\ast(y^K) - \Lambda_K\langle Ax, y^K \rangle\\
 - (n-1) \lambda_1 h^\ast(y_1) + (n-1) \lambda_1 \langle Ax, y_1 \rangle + \sum_{k=1}^{K} \mathcal{E}_{k, 3}(x)
\end{multline}
We have by taking the negative of both sides in the first claim of \Cref{lem: lem_init}) that
\begin{multline*}
 -h^\ast(y_1) +  \langle Ax, y_1 \rangle \geq  -h^\ast(y) - g(x) +  g(x_1) +  \langle Ax_1, y \rangle + \frac{n\max_i\|A_i\|}{2 \gamma } \Big(\| y-y_1\|^2 - \| y-y_0 \|^2 \Big)\\
 + \frac{n\max_i\|A_i\|}{2}  \| x-x_1\|^2 -n\max_i\|A_i\| \| x-x_0\|^2.
\end{multline*}
By using this inequality together with  $\bar x_1 = x_1$ and $1=\lambda_1(n-1)=\lambda_0$ in~\eqref{eq: se3}, we obtain
\begin{multline*}
\sum_{k=1}^{K-1} \lambda_k G(\bar x_{k+1}, \bar y_{k+1}, x, y) \geq \sum_{k=0}^{K-1} \lambda_k \left( g(\bar x_{k+1}) + \langle A\bar x_{k+1}, y \rangle - g(x) \right)  - \sum_{k=0}^{K-1} \lambda_k h^\ast(y) +\Lambda_K h^\ast(y^K) - \Lambda_K\langle Ax, y^K \rangle \\
 + \sum_{k=1}^{K-1} \mathcal{E}_{k, 3}(x)
 + \frac{n\max_i\|A_i\|}{2 \gamma } \Big(\| y-y_1\|^2 - \| y-y_0 \|^2 \Big)
 + \frac{n\max_i\|A_i\|}{2}  \| x-x_1\|^2 -n\max_i\|A_i\| \| x-x_0\|^2.
\end{multline*}
In particular, after recalling $\Lambda_K = \sum_{k=0}^{K-1} \lambda_k$, with the definition of $x^K$ in this lemma,
we have by convexity of $g$ that
\begin{multline}\label{eq: final_lb_gap}
\sum_{k=1}^{K-1} \lambda_k G(\bar x_{k+1}, \bar y_{k+1}, x, y)  \geq \Lambda_K \left[ g(x^K) + \langle Ax^K, y \rangle - g(x) - h^\ast(y) + h^\ast(y^K) - \langle Ax, y^K \rangle \right] +\sum_{k=1}^{K-1} \mathcal{E}_{k, 3}(x) \\
+ \frac{n\max_i\|A_i\|}{2 \gamma } \Big(\| y-y_1\|^2 - \| y-y_0 \|^2 \Big)
 + \frac{n\max_i\|A_i\|}{2}  \| x-x_1\|^2 -n\max_i\|A_i\| \| x-x_0\|^2.
\end{multline}
We identify $G (x^K,y^K,x,y)$ (see~\eqref{eq: g_from_l}) in the RHS of this inequality to complete the proof.
\end{proof}

\begin{lemma}[\cite{chambolle2011first}]\label{lem: acc_rate_seq}
Let us define a positive sequence $\alpha_k$ for $k\geq 0$ such that $\alpha_0 \leq 1$ and $\alpha_{k+1} = \frac{\alpha_k}{\sqrt{1+\alpha_k}}$. Then $\alpha_K \leq 3/K$ for all $K>0$.
\end{lemma}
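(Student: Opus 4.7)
The plan is to work with the reciprocal squares. Squaring the recurrence gives $\alpha_{k+1}^2 = \alpha_k^2/(1+\alpha_k)$, which rearranges to the clean identity
\begin{equation*}
\frac{1}{\alpha_{k+1}^2} = \frac{1}{\alpha_k^2} + \frac{1}{\alpha_k}.
\end{equation*}
First I would observe that $\alpha_0 \le 1$ combined with $\alpha_{k+1} \le \alpha_k$ (clear from $\sqrt{1+\alpha_k}\ge 1$) implies $\alpha_k \le 1$ for all $k$, so the sequence $\{1/\alpha_k\}$ is well defined and nondecreasing.

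The main step is to prove by induction on $k \ge 1$ that $1/\alpha_k^2 \ge k^2/9$, i.e.\ $\alpha_k \le 3/k$. The base case $k=1$ follows immediately from $\alpha_1 \le \alpha_0 \le 1 \le 3$. For the inductive step, assuming $1/\alpha_k^2 \ge k^2/9$ (which also gives $1/\alpha_k \ge k/3$), the identity above yields
\begin{equation*}
\frac{1}{\alpha_{k+1}^2} \;=\; \frac{1}{\alpha_k^2} + \frac{1}{\alpha_k} \;\ge\; \frac{k^2}{9} + \frac{k}{3} \;=\; \frac{k^2+3k}{9} \;\ge\; \frac{(k+1)^2}{9},
\end{equation*}
where the last inequality reduces to $k \ge 1$. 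Taking reciprocals and square roots gives $\alpha_{k+1} \le 3/(k+1)$, closing the induction.

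There is no real obstacle here: the only mild subtlety is choosing the correct Lyapunov quantity ($1/\alpha_k^2$ rather than $1/\alpha_k$), since the recurrence telescopes additively in $1/\alpha_k^2$ with increment $1/\alpha_k$, and this is precisely what makes the quadratic lower bound $k^2/9$ self-reinforcing through the induction.
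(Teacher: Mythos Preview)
Your proof is correct. The key identity $1/\alpha_{k+1}^2 = 1/\alpha_k^2 + 1/\alpha_k$ is verified directly from the recursion, and the induction on the bound $1/\alpha_k^2 \ge k^2/9$ goes through cleanly because $k^2+3k \ge (k+1)^2$ for $k\ge 1$.

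The paper's proof takes a different route: it works with $s_k = 1/\alpha_k$ rather than $1/\alpha_k^2$, rewrites the recursion as $s_{k+1} = \sqrt{s_k^2+s_k} = (s_k+\tfrac12)\sqrt{1-\tfrac{1/4}{(s_k+1/2)^2}}$, and then uses the elementary inequality $\sqrt{1-t}\ge 1-t$ together with $s_k\ge 1$ to obtain the linear increment $s_{k+1}\ge s_k+\tfrac13$, from which $s_K\ge K/3$ follows by summation. Your choice of $1/\alpha_k^2$ as the Lyapunov quantity is arguably cleaner: the recursion becomes an exact additive identity rather than an inequality, and the induction on $k^2/9$ closes with a one-line algebraic check, avoiding the square-root manipulation entirely. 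The paper's approach, on the other hand, gives a direct linear lower bound on $1/\alpha_k$ that could be iterated without induction. Both arguments rely on $\alpha_k\le 1$ in the same way.
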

\begin{proof}
Note first that $\{\alpha_k\}$ is a positive decreasing sequence.
For any $k \ge 0$, we have $\alpha_{k+1} = \frac{\alpha_k}{\sqrt{1+\alpha_k}}$ and thus for $s_k = \alpha_k^{-1}$, we have $\frac{1}{\alpha_{k+1}} = \frac{\sqrt{1+\alpha_k}}{\alpha_k}  \iff s_{k+1} = s_k \sqrt{1+\frac{1}{s_k}}=\sqrt{s_k^2 + s_k}=(s_k+1/2)\sqrt{1-\frac{1/4}{(s_k+1/2)^2}}$ and for $s_k \geq 1$ which is due to $\alpha_k \leq 1$, we use $\sqrt{1-t} \geq 1-t$ for $t\in [0, 1]$ to get $s_{k+1} \geq (s_k+1/2)\left(1-\frac{1/4}{(s_k+1/2)^2}\right) = s_k + \frac{1}{2}-\frac{1/4}{s_k+1/2} \geq s_k + \frac{1}{2}-\frac{1/4}{1+1/2}= s_k + \frac{1}{3}$ (again using $s_k\geq 1$). 
Since this bound holds for all $k=0,1,\dotsc$, we have  $s_k\geq s_0 + \frac{k}{3} \geq \frac{k}{3}$ due to $s_0\geq 1$.
Since $s_{k} = \alpha_{k}^{-1} \geq \frac{k}{3}$, the result follows.
\end{proof}

\section{Conversion of Results for ERM in~\Cref{th: rates_for_part1}}\label{app: upper_lower_bd}
We restate the problem considered in this paper as follows:
\begin{equation}\label{eq:gr3}
    \min_{x} \max_y \sum_{i=1}^n \langle A_ix, y^{(i)} \rangle - h_i^\ast(y^{(i)}) + g(x) = \min_x \sum_{i=1}^n h_i(A_i x) + g(x),
\end{equation}
where $A_i$ is a row vector of length $d$.
In \Cref{th: rates_for_part1} and other results in the paper, we measure the complexity bounds of the algorithms in terms of their dependence on $D_x = \max_{x\in \dom g} \|x\|$, $D_y = \max_{y\in\dom h^\ast} \|y\|$ and $\max_i \| A_i \|$, where $h^\ast(y) = \sum_{i=1}^n h_i^\ast(y^{(i)})$.

In ERM, we often write the formulation with a feature vector $a_i \in \mathbb{R}^d$ and,
\begin{equation}\label{eq: ml_prob}
    \min_{x} \frac{1}{n} \sum_{i=1}^n f_i(\langle a_i, x \rangle) + g(x),
\end{equation}
and we measure the complexity bounds of the algorithms in terms of their dependence on $D_x = \max_{x\in\dom g} \|x\|$, $L_f$: Lipschitz constant of $f_i$ and $ \max_i \| a_i \|_2$.
Of course, \eqref{eq: ml_prob} has the form \eqref{eq:gr3}, but the dependence of the bounds in different quantities can make it hard to compare results in different papers, because they use one or other of there formulations. 

First, note that we can map~\eqref{eq: ml_prob} into~\eqref{eq:gr3} by setting
\begin{equation}\label{eq: app2_mapping}
    h_i \leftarrow \frac{1}{n} f_i \text{~~~~and~~~~} A_i \leftarrow a_i^\top.
\end{equation}
 Let us use \cite[Corollary 17.19]{bauschke2011convex} to state for proper, convex, lsc $h_i:\mathbb{R}\to\mathbb{R}$ that
\begin{equation*}
    h_i \text{ is } \bar L_{h}\text{-Lipschitz} \iff |v| \leq \bar L_{h}, \text{ for all } v \in \dom h^\ast_i.
\end{equation*}
Therefore, we have (see~\eqref{eq:gr3})
\begin{equation}\label{eq:wm3}
D_y = \max_{y\in\dom h^\ast} \| y\| \leq \sqrt{n} \bar L_{h}.    
\end{equation}
The upper bounds we derived for~\eqref{eq:gr3} in Thm.~\ref{th: rates_for_part1},~\ref{th: rates_for_part1.2} are of the form
\begin{equation}\label{eq:hw3}
    \frac{1}{K}n\max_i\|A_i\|\left( D_x^2 + D_y^2 \right).
\end{equation}
Note that this bound is slightly different from \Cref{th: rates_for_part1}.
The reason is that \Cref{th: rates_for_part1} assumes that $h$ is $L_h$-Lipschitz and uses this constant in the bounds instead of $D_y$.
Here, we have that $f_i$ is $L_f$-Lipschitz
and we want to write the bounds in terms of $L_f$ to compare with existing bounds in~\cite{allen2017katyusha,zhang2015stochastic,tan2020accelerated}.

We now show how to  write this bound in an alternative way.
Note that the upper bound given in~\eqref{eq:hw3} normally arises from an inequality of the form
\begin{equation*}
    \frac{1}{K}\left(\frac{1}{\tau} D_x^2 + \frac{n}{\sigma} D_y^2\right),
\end{equation*}
and the step sizes are chosen so that $\tau \sigma n\max_i\|A_i\| \leq 1$.
Normally, we pick $\tau = \frac{1}{n\max_i\|A_i\|}$ and $\sigma = \frac{1}{\max_i\|A_i\|}$, resulting in the upper bound~\eqref{eq:hw3}.
However, with the knowledge of $D_x, D_y$, we can also pick $\tau = \frac{D_y}{n D_x \max_i\|A_i\|}$ and $\sigma = \frac{D_x}{D_y\max_i\|A_i\|}$ which still satisfies the step size rule, since we scaled the step sizes in the same way, and the upper bound in~\eqref{eq:hw3} can be alternatively written as
\begin{equation*}
    \frac{n\max_i\|A_i\|}{K} D_x D_y.
\end{equation*}
Using~\eqref{eq: app2_mapping} and~\eqref{eq:wm3}, our last bound becomes
\begin{equation}
    \frac{1}{K}n\max_i\|a_i\|D_x \sqrt{n}\bar L_{h}.
\end{equation}
Since $f_i$ is $L_f$-Lipschitz and $h_i = \frac{1}{n}f_i$, we have $\bar L_{h} = \frac{L_f}{n}$, so this bound becomes
\begin{equation}
    \frac{\sqrt{n}L_f\max_i\|a_i\|D_x}{K}.
\end{equation}
This bound gives the complexity 
\begin{equation*}
    \frac{ \sqrt{n}L_f\max_i\|a_i\|D_x}{\varepsilon},
\end{equation*}
for solving ERM problem in~\eqref{eq: ml_prob} to $\varepsilon$-accuracy.
We can now compare it directly with the existing results~\cite{allen2017katyusha,zhang2015stochastic,tan2020accelerated}.

\bibliographystyle{plain}
\bibliography{main}

\end{document}